\theoremstyle{definition}
\newtheorem{Def}{Definition}[section]
\newtheorem{Rem}[Def]{Remark}
\newtheorem{Rem*}[]{Remark}
\newtheorem{Exm}[Def]{Example}
\theoremstyle{plain}
\newtheorem{Prop}[Def]{Proposition}
\newtheorem{Thm}[Def]{Theorem}
\newtheorem*{Thm*}{Theorem}
\newtheorem{Lem}[Def]{Lemma}
\newtheorem{Cor}[Def]{Corollary}
\title{Arithmetic field theory via pro-p duality groups}
\author{Oren Ben-Bassat, Nadav Gropper}
\DeclareMathOperator{\Hom}{Hom}
\DeclareMathOperator{\Aut}{Aut}
\DeclareMathOperator{\Fun}{Fun}
\DeclareMathOperator{\colim}{colim}
\DeclareMathOperator{\hcolim}{hcolim}
\DeclareMathOperator{\id}{id}
\DeclareMathOperator{\ob}{ob}
\DeclareMathOperator{\Irr}{Irr}
\DeclareMathOperator{\rk}{rk}
\DeclareMathOperator{\gr}{gr}
\DeclareMathOperator{\cd}{cd}
\DeclareMathOperator{\PD}{PD}
\DeclareMathOperator{\Cob}{Cob}
\DeclareMathOperator{\Gal}{Gal}
\DeclareMathOperator{\Grpd}{Grpd}
\DeclareMathOperator{\fGrpd}{fGrpd}
\DeclareMathOperator{\pGrpd}{p-Grpd}
\DeclareMathOperator{\pGrp}{p-Grp}
\DeclareMathOperator{\Span}{span}
\DeclareMathOperator{\Cospan}{cospan}
\DeclareMathOperator{\Proj}{Proj}
\DeclareMathOperator{\TQFT}{TQFT}
\DeclareMathOperator{\Frob}{Frob}
\DeclareMathOperator{\wHom}{\widetilde{\Hom}}
\begin{document}
\maketitle{}
\makeatletter
\tikzset{nomorepostaction/.code={\let\tikz@postactions\pgfutil@empty}}
\makeatother
\newsavebox{\TrBox} 
\savebox{\TrBox}{  \begin{tikzpicture} 
\draw(0.3,0.65) to[out=0,in=210] (1,1) to[out=30,in=150] (3,1) to[out=-30,in=180] (3.7,0.65); \draw(3.7,-0.65) to[out=180,in=30] (3,-1) to[out=210,in=-30] (1,-1) to[out=150,in=0] (0.3,-0.65);
\draw[smooth] (1.4,0.1) .. controls (1.8,-0.25) and (2.2,-0.25) .. (2.6,0.1);
\draw (1.5,0.015) .. controls (1.8,0.2) and (2.2,0.2) .. (2.5,0.015); 
\draw (0.3,0.65) arc(90:450:0.3 and 0.65); 
\draw[dashed] (3.7,0.65) arc(90:270:0.3 and 0.65); 
\draw (3.7,0.65) arc(450:270:0.3 and 0.65); 
\filldraw[blue] (1,0.65) circle (2pt) node[align=right,  below] {$p^r$} ;
\end{tikzpicture} }
\newcommand{\Tr}{\usebox{\TrBox}}
\newsavebox{\TorBox} 
\savebox{\TorBox}{  \begin{tikzpicture} 
\draw(0.3,0.65) to[out=0,in=210] (1,1) to[out=30,in=150] (3,1) to[out=-30,in=180] (3.7,0.65); \draw(3.7,-0.65) to[out=180,in=30] (3,-1) to[out=210,in=-30] (1,-1) to[out=150,in=0] (0.3,-0.65);
\draw[smooth] (1.4,0.1) .. controls (1.8,-0.25) and (2.2,-0.25) .. (2.6,0.1);
\draw (1.5,0.015) .. controls (1.8,0.2) and (2.2,0.2) .. (2.5,0.015); 
\draw (0.3,0.65) arc(90:450:0.3 and 0.65);
\draw[dashed] (3.7,0.65) arc(90:270:0.3 and 0.65); 
\draw (3.7,0.65) arc(450:270:0.3 and 0.65); 
\end{tikzpicture} }
\newcommand{\Torb}{\usebox{\TorBox}}

\newsavebox{\PantsLBox} 
\savebox{\PantsLBox}{  \begin{tikzpicture} 
\draw [smooth] (0,1.25) to[out=0,in=140] (1,1) to[out=-40,in=180] (1.7,0.65) ;
\draw [smooth] (1.7,-0.65) to[out=180,in=40] (1,-1) to[out=220,in=0] (0,-1.25);
\draw[smooth] (0,0.25) arc(270:450:0.35 and -0.25);
\draw (0,0.25) arc(90:450:0.3 and -0.5);
\draw (0,-0.25) arc(90:450:0.3 and 0.5);
\draw[dashed] (1.7,-0.65) arc(90:270:0.25 and -0.65);
\draw (1.7,-0.65) arc(450:270:0.25 and -0.65);
\end{tikzpicture} }
\newcommand{\midarrow}{\tikz \draw[-triangle 90] (0,0) -- +(.1,0);}
\newcommand{\PantsL}{\usebox{\PantsLBox}}

\newsavebox{\PantsRBox} 
\savebox{\PantsRBox}{  \begin{tikzpicture} 
\draw [smooth] (4.4,1.25) to[out=180,in=40] (3.4,1) to[out=-140,in=0] (2.7,0.65) ;
\draw [smooth] (2.7,-0.65) to[out=0,in=140] (3.4,-1) to[out=320,in=180] (4.4,-1.25);
\draw[smooth] (4.4,0.25) arc(270:450:-0.35 and -0.25);
\draw[dashed] (4.4,0.25) arc(90:270:0.3 and -0.5);
\draw (4.4,0.25) arc(450:270:0.3 and -0.5);
\draw[dashed] (4.4,-0.25) arc(90:270:0.3 and 0.5);
\draw (4.4,-0.25) arc(450:270:0.3 and 0.5);
\draw (2.7,-0.65) arc(90:450:0.25 and -0.65);
\end{tikzpicture} }
\newcommand{\PantsR}{\usebox{\PantsRBox}}

\newsavebox{\capBox} 
\savebox{\capBox}{  \begin{tikzpicture} 
\draw (0,0.25) arc(270:450:0.7 and 0.5);
\draw (0,0.25) arc(90:450:0.3 and -0.5);
\end{tikzpicture} }
\newcommand{\capB}{\usebox{\capBox}}

\newsavebox{\cupBox} 
\savebox{\cupBox}{  \begin{tikzpicture} 
\draw (2.5,0.25) arc(270:450:-0.7 and 0.5);
\draw (2.5,0.25) arc(450:270:0.3 and -0.5);
\draw[dashed] (2.5,0.25) arc(90:270:0.3 and -0.5);
\end{tikzpicture} }
\newcommand{\cupB}{\usebox{\cupBox}}

\newsavebox{\cylBox} 
\savebox{\cylBox}{  \begin{tikzpicture} 
\draw[smooth] (0,0.25) to (1.5,0.25);
\draw[smooth] (0,1.25) to (1.5,1.25);
\draw (0,0.25) arc(90:450:0.3 and -0.5);
\draw[dashed] (1.5,0.25) arc(90:270:0.3 and -0.5);
\draw (1.5,0.25) arc(450:270:0.3 and -0.5);
\end{tikzpicture} }
\newcommand{\cyl}{\usebox{\cylBox}}

\newsavebox{\cylBoxAlpha} 
\savebox{\cylBoxAlpha}{\begin{tikzpicture}  \draw[smooth] (0,0.25) to (1.5,0.25); \draw[smooth] (0,1.25) to (1.5,1.25); \draw (0,0.25) arc(90:450:0.3 and -0.5); \draw[dashed] (1.5,0.25) arc(90:270:0.3 and -0.5); \draw (1.5,0.25) arc(450:270:0.3 and -0.5);
 \node [isosceles triangle, fill=black, shape border rotate=90, inner xsep=0.09pt] at (0.297,0.7){}; \node [isosceles triangle, fill=black, shape border rotate=270, inner xsep=0.09pt] at (1.793,0.7){}; \draw (2,0.7) node {$\alpha$}; \draw (0.5,0.75) node {\small $1$}; \end{tikzpicture}}
 \newcommand{\cylAlpha}{\usebox{\cylBoxAlpha}}

 \newsavebox{\swapBox}
 \savebox{\swapBox}{\begin{tikzpicture}  \draw (0,1.75) arc(90:450:0.3 and -0.5); \draw[dashed] (2,1.75) arc(90:270:0.3 and -0.5); \draw (2,1.75) arc(450:270:0.3 and -0.5);
\draw (0,0.25) arc(90:450:0.3 and -0.5); \draw[dashed] (2,0.25) arc(90:270:0.3 and -0.5); \draw (2,0.25) arc(450:270:0.3 and -0.5);
\draw[smooth] (0,1.75) .. controls(0.3,1.85)and (1.7,0.15).. (2,0.25); \draw[smooth] (0,2.75) .. controls(0.3,2.85)and (1.7,1.15).. (2,1.25);
\draw[smooth] (0,0.25) .. controls(0.33,0.32).. (0.99,1); \draw[dashed] (0.99,1) to (1.54,1.5); \draw[smooth](1.54,1.5) .. controls (1.72,1.63)..(2,1.75);
\draw[smooth] (0,1.25) .. controls(0.3,1.3) .. (0.48,1.48); \draw[dashed] (0.48,1.48) to (0.99,2); \draw[smooth] (0.99,2) .. controls(1.6,2.68).. (2,2.75);
 \end{tikzpicture}}
 \newcommand{\swap}{\usebox{\swapBox}}
 
\newsavebox{\PairingBox} 
\savebox{\PairingBox}{  \begin{tikzpicture} 
\draw [smooth] (0,1.25) arc(270:450:1.3 and -1.25);
\draw[smooth] (0,0.25) arc(270:450:0.35 and -0.25);
\draw (0,0.25) arc(90:450:0.3 and -0.5);
\draw (0,-0.25) arc(90:450:0.3 and 0.5);
\end{tikzpicture} }
\newcommand{\Pairing}{\usebox{\PairingBox}}

\newsavebox{\CoPairingBox} 
\savebox{\CoPairingBox}{  \begin{tikzpicture} [rotate=180]
\draw [smooth] (0,1.25) arc(270:450:1.3 and -1.25);
\draw[smooth] (0,0.25) arc(270:450:0.35 and -0.25);
\draw (0,0.25) arc(90:270:0.3 and -0.5);
\draw (0,-0.25) arc(90:270:0.3 and 0.5);
\draw[dashed] (0,0.25) arc(450:270:0.3 and -0.5);
\draw[dashed] (0,-0.25) arc(450:270:0.3 and 0.5);
\end{tikzpicture} }
\newcommand{\CoPairing}{\usebox{\CoPairingBox}}
 
\begin{abstract}
Using the theory of pro-p groups and relative Poincar\'{e} duality, we define a type of cobordism category well suited to arithmetic topology. We completely classify topological quantum field theories on these two-dimensional versions of our cobordism categories. This classification uses Frobenius algebras with extra operations corresponding to automorphisms of the p-adic integers. We look in more detail at the example of arithmetic Dijkgraff--Witten theory for a finite gauge p-group in this setting. This allows us to deduce formulae counting Galois extensions of local p-adic fields whose Galois groups are the given gauge group.
\end{abstract}
\section{Introduction}

Quantum field theory (QFT) is a framework for organizing principles for many structures in mathematics and physics (for some recent mathematical perspectives on QFTs see \cite{QFT1, QFT2, QFT3, QFT4}).
When the theory depends only on the topology of the space, it is called a Topological Quantum Field Theory (TQFT). Axiomatic frameworks for TQFTs were first studied by Atiyah  \cite{atiyah1988topological} and Segal \cite{segal1988definition}.

Arithmetic topology (sometimes known as the knots primes analogy) draws analogies between number theory and low dimensional topology \cite{mazur1973notes, morishita2024knots}. Work on this analogy is due to Mazur, Mumford, Kapranov, Reznikov, Morishita, and others. Following this philosophy,  Minhyong Kim in his Arithmetic Chern-Simons Theory \cite{MR4166930, MR3857144}, proposed one can try and find arithmetic versions of quantum field theoretic ideas.
Since then, there has been a surge of interest in arithmetic analogues of TQFTs and the role they play in number theory and arithmetic geometry, some examples of arithmetic QFTs can be found in \cite{ACSII,chung2019abelian,Hirano:2023hqu, Pappas21, MR4648746, MR4586262, MR4474125, MR3888342}, and more recently in connections to the Langland's program and its generalizations \cite{ben2024relative}. \\
So far the various existing arithmetic QFTs and TQFTs are as explicit constructions of analogues of specific theories from physics, and there has yet to be a general framework for arithmetic cobordisms and TQFTs. 

In this article, we introduce a new general framework for $(d+1)$-dimensional arithmetic (pro-$p$) TQFT, this framework generalizes classical axiomatic frameworks of TQFTs such as ones given by Atiyah, Segal, and by Turner-Turaev.
Vaguely speaking, in this article, we consider and reformulate TQFT and cobordisms in a completely group theoretic way as opposed to the classical way of looking at topological or geometric objects.
To do this we consider pro-$p$ groups with certain duality properties, this enables us to study pro-$p$ cobordisms and TQFTs for arithmetic objects (such as $p$-adic fields in dimension $2$) and manifolds (spacetime) at the same time.
This approach allows one to include certain objects and morphisms of arithmetic nature, and to decompose them into objects of more geometric nature.

More precisely, closed manifolds are replaced with (pro-$p$) groups with a Poincar\'{e} duality on their group cohomology.
Manifolds with boundary are replaced by groups with a relative version of Poincar\'{e} duality. That is by a pair $(G,\mathcal{S})$, where $\mathcal{S}$ is a collection of closed subgroups of $G$, such that the relative group cohomology of the pair (in the sense of \cite{wilkes2019relative}) has a Poincar\'{e} duality.

Our work not only provides an axiomatic framework for considering arithmetic and classical cobordisms at the same time but also as far as we know this is the first purely group theoretic version of cobordism categories and TQFTs. Group theoretic cobordism rings were considered by Bieri and Eckmann in \cite{MR0393197} (i.e. $\pi_0$ of the category we define here). 

In dimension $2$, it is known that $\PD^2_{\mathbb{Z}}$ (groups with $\PD$ over $\mathbb{Z}$ are exactly surface groups and it is conjectured that in dimension $3$ such duality groups correspond to $3$-manifolds.
In the pro-$p$ world, they also have a very nice form.

The objects of such cobordisms are finite collections of the cyclic pro-$p$ group $\mathbb{Z}_{p}$ (so a “collection of circles”) and morphisms between such objects $\mathcal{U}$ and $\mathcal{N}$ are pairs of groups of the form
\[G=\langle x_{1},y_{1},...,x_{n},y_{n},b_{1},...b_{s} \rangle\]
a free pro-$p$ group on $2n+s$ generators, and boundary components \[\mathcal{S}=\{B_{0},B_{1},...,B_{s}\}\] and \[b_{0}=x_{1}^{p^{r}}[x_{1},y_{1}]\cdots[x_{n},y_{n}]b_{1}\cdots b_{s},\] $B_{i}$ is the closed subgroup generated by $b_{i}$, such that $\mathcal{S}=\mathcal{N}\coprod\mathcal{U}$ (together with two more types of morphisms corresponding to caps and cups we formally add in the cobordisms).

Since the objects we study are not always "orientable", we follow ideas introduced by Turner and Turaev in \cite{turaev2006unoriented} for studying unoriented cobordisms.
We note here that there are other formalisms for unoriented cobordisms and TQFTs, for example \cite{karimipour1997lattice, young2020orientation}.
Turner and Turaev show that equivalence classes of $2$-dimensional unoriented TQFTs are in bijection with extended Frobenius algebras.
That is a commutative Frobenius algebra together with the usual multiplication $m$, co-multiplication $\Delta$, unit $\iota$ and co-unit $\epsilon$, as well as new maps $\theta: R\to V$ and $\phi:V \to V$ which satisfy $\phi \circ \phi = \text{id}$ as well as 
\[\phi \circ m \circ ( \theta \otimes \text{id}) = m\circ ( \theta \otimes \text{id})\]
and
\[
m \circ (\phi \otimes \text{id}) \circ \Delta \circ \iota =m\circ (\theta \otimes \theta).
\]

One of the main differences in our setting is that rather than just orientation preserving or non-preserving automorphisms coming from $\Aut(\mathbb{Z})=\{\pm 1\}$, in the pro-p world, the orientations are controlled by the larger group $\Aut(\mathbb{Z}_{p})$.
In Definition \ref{Def:ExFrAlg} we define a pro-p version of extended Frobenius algebra. 
The axioms of these extended Frobenius algebras are a direct generalizations of the ones in \cite{turaev2006unoriented}.

Our main theorem gives a classification of pro-p TQFTs of dimension $1+1$, in terms of these extended Frobenius algebras:

\begin{Thm}\label{Thm:MainIntro}
    Isomorphism classes of $(1+1)$ pro-$p$ TQFTs at $p$ (which are mod $p$ orientation compatible) are in a bijective correspondence with isomorphism classes of $\Aut(\mathbb{Z}_{p})$-extended $R$-Frobenius algebras for any ring $R$.
\end{Thm}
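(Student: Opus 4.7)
The plan is to adapt the classical Turaev--Turner proof of the 2D (unoriented) TQFT classification to the pro-$p$ setting, replacing manifolds-with-boundary by the relative $\PD^2$ pro-$p$ groups described in the introduction and enlarging $\Aut(\mathbb{Z}) = \{\pm 1\}$ to $\Aut(\mathbb{Z}_p)$ throughout. Given a pro-$p$ TQFT $Z$, set $V := Z(\mathbb{Z}_p)$; monoidality forces $Z$ on a finite collection of $n$ copies of $\mathbb{Z}_p$ to be $V^{\otimes n}$. Applying $Z$ to the elementary cobordisms with $n=s=0$ (cap and cup) and to the untwisted pants (the case $n=0$, $r=0$, $s=2$) produces a unit $\iota$, counit $\epsilon$, multiplication $m$, and comultiplication $\Delta$ on $V$; the standard arguments (associativity, coassociativity, the Frobenius relation, commutativity) follow from the evident isomorphisms of the underlying pairs $(G,\mathcal{S})$ obtained by permuting the generators $b_1, b_2, b_0$ of the free pro-$p$ group.

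Next I would identify the extra structure. The cobordism with $n=0$, $s=1$ and defining relation $b_0 = x_1^{p^r} b_1$, viewed as a self-morphism of $\mathbb{Z}_p$, supplies for each $\alpha \in \Aut(\mathbb{Z}_p)$ (encoded by $p^r$ together with the mod-$p$ orientation datum) an endomorphism $\phi_\alpha : V \to V$, and the corresponding capped version ($s=0$) supplies a family of elements $\theta_\alpha : R \to V$. The relation $\phi \circ \phi = \id$ (and its $\Aut(\mathbb{Z}_p)$-analogue, the group law $\phi_\alpha \phi_\beta = \phi_{\alpha\beta}$) comes from composing the defining presentations; the two Turaev--Turner identities involving $\theta$ and $\phi$ correspond to isomorphisms of pairs $(G,\mathcal{S})$ obtained by conjugating generators and reading off the single defining word $b_0 = x_1^{p^r}[x_1,y_1]\cdots[x_n,y_n]b_1\cdots b_s$ in two different ways.

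The core technical step, carried out in its own lemma, is a pants-type decomposition: every morphism $(G,\mathcal{S})$ of the listed form factors as a composition of the elementary pieces (pants, caps, cups, twisted cylinders $\phi_\alpha$, and the genus-one $\theta_\alpha$-cap) dictated by the presentation $b_0 = x_1^{p^r}[x_1,y_1]\cdots[x_n,y_n]b_1\cdots b_s$. Working letter-by-letter through this word produces the decomposition, and the only ambiguities (reordering factors, moving the $x_1^{p^r}$ past commutators, changing a conjugate of $b_i$) are precisely the relations in an $\Aut(\mathbb{Z}_p)$-extended Frobenius algebra listed in Definition \ref{Def:ExFrAlg}. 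Combined with naturality of $\coprod$ on objects this shows the assignment $Z \mapsto (V, m, \iota, \Delta, \epsilon, \{\phi_\alpha, \theta_\alpha\})$ is well-defined and injective on isomorphism classes.

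For the inverse, given an $\Aut(\mathbb{Z}_p)$-extended $R$-Frobenius algebra I would define $Z$ on elementary morphisms in the obvious way and extend by composition; the pants decomposition plus the correspondence between relations and axioms from the previous step shows $Z$ is a well-defined symmetric monoidal functor, and the two constructions are mutual inverses by design. The main obstacle is the pants-decomposition lemma: unlike the topological setting, where the analogous statement rests on Morse theory, here one must verify purely group-theoretically that the presentation above exhausts morphisms and that no hidden relation beyond the extended-Frobenius ones is imposed by isomorphisms of relative $\PD^2$ pro-$p$ pairs. The subtlety is amplified by the fact that $\Aut(\mathbb{Z}_p)$ is infinite, so one must track the $p^r$-decoration carefully and use the ``mod $p$ orientation compatibility'' hypothesis precisely at the point where one rules out spurious relations between different twists $\phi_\alpha$.
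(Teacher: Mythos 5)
Your overall strategy is the same as the paper's: extract $V=Z(\mathbb{Z}_p)$, read off $\iota,\epsilon,m,\Delta$ from caps, cups and pants, add twist data for $\Aut(\mathbb{Z}_p)$, and reduce everything to a generators-and-relations presentation of the cobordism category whose relations match the extended Frobenius axioms. However, two steps that you treat as routine are precisely where the paper has to do real work, and as you state them they would not go through.

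First, you claim the identities involving $\phi$ and $\theta$ ``correspond to isomorphisms of pairs $(G,\mathcal{S})$ obtained by conjugating generators and reading off the single defining word in two different ways.'' Conjugation is not enough. The relation you need (the paper's (R8), (R9), (R12), feeding axioms (F8), (F9), (F12)) is that there is an automorphism of the free pro-$p$ group $\langle a,b\rangle$ carrying $a$, $b$ and $ab$ to conjugates of $a^{\alpha}$, $b^{\alpha}$ and $(ab)^{\alpha}$ simultaneously, and an automorphism of $\langle x,y,s_1\rangle$ fixing $s_1$ up to conjugacy while sending $s_0=s_1x^{p^r}[x,y]$ to a conjugate of $s_0^{\alpha}$ when $\alpha\equiv 1\pmod{p^r}$. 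The paper proves these by an induction along the descending $p$-central (respectively $q$-central) series, lifting through each graded quotient; no amount of conjugating generators produces an $\alpha$-th power twist. Without this your map from TQFTs to algebras is not well defined on the twisted generators. Second, the decoration bookkeeping for $r$ is not a matter of ``reading off the word'': the paper must compute that gluing $P_{1,2}$ to $P_{2,1}$ along $a\mapsto c^{\alpha}$, $b\mapsto d^{\beta}$ yields $T^r_{1,1}$ with $r=v_p(\alpha\beta^{-1}-1)$, and that $T^r_{1,1}\ast_{\mathbb{Z}_p}T^{r'}_{1,1}\cong T_{1,1}\ast_{\mathbb{Z}_p}T^{\min(r,r')}_{1,1}$; both rest on the classification of proper $\mathbb{Z}_p$-splittings of pro-$p$ $\PD^2$ pairs (and this is also the only place mod-$p$ compatibility is genuinely invoked, to guarantee the pro-$p$ amalgams are proper and are pro-$p$ completions of discrete gluings). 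A smaller slip: the source of $\phi_\alpha$ is not a group triple with presentation $b_0=x_1^{p^r}b_1$ (no such pair occurs in Wilkes's classification when $n=0$); it is the mapping-cylinder cospan of the automorphism $1\mapsto\alpha$ of $\mathbb{Z}_p$, i.e.\ the $f_{\#}$ datum already built into the TQFT axioms. Finally, the completeness of the relation set is established in the paper by reducing every cobordism to a normal form in the style of Kock, using the isomorphism invariant $(g,r,n,u)$; your ``letter-by-letter'' factorization would still need an argument of this type to rule out hidden relations.
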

To obtain the above theorem, we use the analysis of splittings of pro-p $PD^2$ groups, from \cite{GropperArxiv}.
In a sense, we get a sort of “pair of pants” decomposition to every pro-$p$ $\PD^{2}$ pair.
The power of having such classification is that it allows the study of certain invariants for a $p$-adic field, by first studying them for pairs of pants, and then gluing them together to a $p$-adic field.
Such pairs of pants can be glued to one another along their boundaries in nonstandard ways, corresponding to the large group of automorphisms of $\mathbb{Z}_{p}$.

A practical consequence of this work is that using a specific example of TQFTs in our theory one can re-derive Yamagishi’s formula \cite{Yama} for the number of extensions of a $p$-adic field with a given Galois group. 
As opposed to Yamagishi's algebraic arguments, we use more ``geometric” arguments (i.e. cutting into pairs of pants).
These ``geometric” arguments generalize the arguments used by Mednykh \cite{MR490616} to get a formula for counting $\Gamma$-covers of a manifold S.\\

\begin{Cor}\label{Cor:MainCorIntro}
    
    Let $K$ be a $p$-adic field of degree $[K:\mathbb{Q}_{p}]=n$, such that $p\neq2$ and K contains the $p^{r}$-th roots of unity, but not the $p^{r+1}$-th roots of unity. Let $G_{K}$ be the absolute Galois group of $K$, and let $\Gamma$ be a p-group, we have the following: \[|\Hom(G_{K},\Gamma)|=\frac{1}{|\Gamma|^{n}}\underset{\chi\in \Irr(\Gamma)}{\sum}\chi(1)^{-n}\underset{\gamma\in\Gamma}{\sum}\chi(\gamma^{p^{r}-1})\chi(\gamma).\]

Here $\Irr(\Gamma)$ is the set of traces of equivalences classes of finite dimensional complex irreducible representations of $\Gamma$. Thus we get that the number of extensions L of a field K, with $\Gal(L/K)\cong\Gamma$ is\[\frac{1}{|\Aut(\Gamma)|}\sum_{H\leq\Gamma}\mu(H)|\Hom(G_{K},H)|.\]
\end{Cor}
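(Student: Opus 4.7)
The plan is to realise both formulae by evaluating, via Theorem \ref{Thm:MainIntro}, the arithmetic Dijkgraaf--Witten (DW) TQFT with finite gauge group $\Gamma$ on the closed pro-$p$ arithmetic surface associated to $G_K$.

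First I would construct the arithmetic DW TQFT: on an object it returns an appropriate tensor power of $Z(R[\Gamma])$, and on a cobordism pair $(G,\mathcal{S})$ a normalised count of representations $G\to\Gamma$ with prescribed restriction to $\mathcal{S}$. By Theorem \ref{Thm:MainIntro} this TQFT is encoded in an $\Aut(\mathbb{Z}_p)$-extended Frobenius algebra on $Z(R[\Gamma])$: the underlying algebra is the classical centre with its standard Frobenius structure (character idempotents $e_\chi$, counit $\epsilon(e_\chi)=\chi(1)^2/|\Gamma|$, and handle element $m\circ\Delta(e_\chi)=(|\Gamma|/\chi(1)^2)\,e_\chi$), while the extra $\Aut(\mathbb{Z}_p)$-data sends a class sum $[\gamma]$ to $[\gamma^\alpha]$ for $\alpha\in\Aut(\mathbb{Z}_p)$, and the $p^r$-twist element attached to the marked cap of the introduction is a suitably normalised version of $\sum_{\gamma\in\Gamma}\gamma^{p^r}$.

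Since $\Gamma$ is a $p$-group, $\Hom(G_K,\Gamma)=\Hom(G_K(p),\Gamma)$ for the maximal pro-$p$ quotient $G_K(p)$, which by Demushkin's theorem has a presentation matching the closed objects of the arithmetic cobordism category, with a single relation of the form $x_1^{p^r}[x_1,y_1]\cdots[x_m,y_m]$. I would then apply the splittings of pro-$p$ $\PD^2$ pairs from \cite{GropperArxiv} to cut this closed cobordism into caps, cups, pairs of pants, ordinary handles, and one $p^r$-twisted piece, and evaluate the TQFT piece by piece. Contracting in the character basis collapses everything to a sum over $\Irr(\Gamma)$: each ordinary handle contributes a factor $|\Gamma|/\chi(1)^2$, while the twisted handle, after interacting with the adjacent commutator $[x_1,y_1]$, contributes $\chi(1)^{-1}\sum_\gamma\chi(\gamma^{p^r-1})\chi(\gamma)$ (the exponent shift from $p^r$ to $p^r-1$ arising because $x_1$ is shared between the twist and the commutator). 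Collecting the factors and normalisations yields the stated formula for $|\Hom(G_K,\Gamma)|$. The extension count is then standard Möbius inversion: every Galois extension $L/K$ with $\Gal(L/K)\cong\Gamma$ corresponds to a surjection $G_K\twoheadrightarrow\Gamma$ well-defined up to $\Aut(\Gamma)$, so writing $|\Hom(G_K,H)|$ as a sum over images of surjections for $H\leq\Gamma$ and inverting gives the claimed $\frac{1}{|\Aut(\Gamma)|}\sum_H\mu(H)|\Hom(G_K,H)|$.

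The main obstacle is the penultimate step: verifying that the splittings from \cite{GropperArxiv} applied to the Demushkin group, combined with the $\Aut(\mathbb{Z}_p)$-twists that appear at each $\mathbb{Z}_p$-gluing, produce exactly the combination $\chi(\gamma^{p^r-1})\chi(\gamma)$ and the correct overall normalisation $|\Gamma|^{-n}$. This requires careful bookkeeping of how the single twisted generator interacts with its adjacent handle under the Plancherel pairing, paralleling but generalising the classical Mednykh argument for closed orientable surfaces to the arithmetic setting where orientations are controlled by the larger group $\Aut(\mathbb{Z}_p)$.
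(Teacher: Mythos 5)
Your proposal follows essentially the same route as the paper's Section \ref{sect:DW}: define the pro-$p$ Dijkgraaf--Witten theory as a (normalised) count of homomorphisms to $\Gamma$ with prescribed boundary data, reduce $\Hom(G_K,\Gamma)$ to $\Hom(G_K(p),\Gamma)$ for the Demushkin group $G_{\frac{n}{2}+1,r}$, decompose that closed cobordism into a cap, a cup, $n/2$ ordinary handles and one $r$-twisted handle $T^r_{1,1}=P_{2,1}\circ(C^{\alpha}_{1,1}\coprod C_{1,1})\circ P_{1,2}$ with $\alpha^{-1}=1-p^r$ (which is exactly where the exponent $p^r-1$ enters), evaluate in the character basis, and finish with Hall's M\"obius inversion. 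The only caveats are bookkeeping ones you already flag: your stated handle eigenvalue $|\Gamma|/\chi(1)^2$ should be $(|\Gamma|/\chi(1))^2$ in the paper's normalisation, and note that the paper's own computation (Equation \ref{eqn:Yama}) produces the prefactor $|\Gamma|^{n}$ rather than the $|\Gamma|^{-n}$ appearing in the introduction's statement.
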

The last equation comes from Hall's M\"{o}bius inversion formula on groups \cite{hall1936eulerian}.
This corollary comes from our main example of a pro-$p$ TQFT, which looks at complex functions on stacks of $\Gamma-$bundles for some finite gauge group $\Gamma$. Such a TQFT can be seen as an arithmetic version of Dijkgraaf-Witten theory \cite{MR1048699}. 
We explain how to compute these invariants when one replaces $G_K$ with a general pro-$p$ $\PD^2$ group, and $\Gamma$ with a  general finite $p$-group.\\
In the future, we hope to put other arithmetic TQFTs into our formalism. For example in an ongoing sequel \cite{Part2} to this paper, we have begun studying more general Dijkgraaf-Witten theories, coming from twisting by a class in $H^{2}(\Gamma,\mathbb{C}^{\times})$, and deformations of such theories. The resulting TQFT will produce formulae similar to those in \cite{turaev2007dijkgraaf}. 
\subsection*{Outline of the paper}
We summarize the relevant background on pro-p graphs of groups and relative Poincare duality groups in Section \ref{sect:prelim}.\\
The main definitions of our paper are in Section \ref{sect:MainDefs}. We start by giving the general formalism for pro-$p$ TQFTs in Subsection \ref{subsect:TQFT}. We then focus on the case of $(1+1)$ dimensional TQFTs, for which there is an equivalent notion of a TQFT coming from symmetric monoidal functors from a certain category of cospans of groupoids, $Cob^2_p$, to projective $R$-modules, this is discussed and defined in Subsection \ref{subsect:Cob}.
In Subsection \ref{subsect:Frob} we define $\mathbb{U}_p$-extended Frobenius algebras and give a first example of such algebras.\\
We study the structure of the category $Cob_p^2$ in Section \ref{sect:Structure}, and describe a set of generators for it in subsection \ref{subsect:Gens}.
We then find relations between said generators, and show that these relations are all the relations of $Cob_p^2$ in Subsection \ref{subsect:Rels}.\\
Finally, in Section \ref{sect:Equiv}, we prove our main Theorem \ref{Thm:MainIntro} by using the structure we described in Section \ref{sect:Structure}.

We define a pro-$p$ version of a $(1+1)$ dimensional Dijkgraaf-Witten theory in Section \ref{sect:DW}. We show it is indeed a TQFT, and provide explicit computations of the image of the generators of $Cob_p^2$ under this TQFT.
We use these computations to get Corollary \ref{Cor:MainCorIntro}.

We also have two appendices, the first Appendix \ref{App:Cats} provides some background on pro-$p$ groupoids and their amalgams. Appendix \ref{App:Cospan} contains most of the technical details for why a $(1+1)$ dimensional pro-$p$ TQFT as defined in Definition \ref{Def:TQFT-TurnerTaraev} is equivalent to a symmetric monoidal functor from $Cob_p^2$.

\subsection*{Notation and conventions}
In this article, $p$ is a fixed odd prime number. We use 
 \[\mathbb{U}_p=\ker [U(\mathbb{Z}_p)\to U(\mathbb{Z}/p)]
 \]
 for the subgroup of units of $\mathbb{Z}_p$ which are trivial mod $p$. Given a pro-p group $G$ and elements $\lambda\in\mathbb{Z}_p$, $x\in G$, we denote by $x^\lambda=\underset{i\rightarrow\infty}\lim x^{n_i}$ for some sequence of integers $n_i$ converging to $\lambda$ (this is independent of the choice of sequence). We use the pro-p version of generators and relations notation, so for example $<x> = \{x^\alpha \ | \ \alpha \in \mathbb{Z}_p \}\cong \mathbb{Z}_p$ and $<x,y>$ is the pro-$p$ completion of the free group on two generators.
 
 We use $\pGrp$ for the category of finitely presented pro-$p$ groups. We denote the category of groupoids  with finite $\pi_0$ and all automorphism groups finite by $\fGrpd$. We denote the category of pro-$p$-groupoids with finite $\pi_0$ and all automorphism groups finitely presented pro-$p$ groups by $\pGrpd$.
 Amalgams in this paper (unless explicitly stated otherwise) are considered in the category of pro-p groups and the amalgamation will be denoted by $*$.
 Whenever we have finite families of objects $\{A_i\}_{i\in I}$, $\{B_j\}_{j\in J}$, an isomorphism between them, is a family of isomorphisms $\phi_i:A_i\rightarrow B_j$ for some $j\in J$, such that this induces a bijection between $I$ and $J$. 
In this article, groups are usually denoted as $G, H, \dots$, collections of subgroups as $\mathcal{S}, \mathcal{N}, \dots$, and groupoids as $\mathfrak{G}, \mathfrak{H}, \dots$.

Given two groupoids $\mathfrak{G}$ and $\mathfrak{H}$ we use $\Hom(\mathfrak{G},\mathfrak{H})$ to denote the natural equivalence classes of functors from $\mathfrak{G}$ to $\mathfrak{H}$, and use $\Fun(\mathfrak{G},\mathfrak{H})$ to denote the groupoid whose objects are functors from  $\mathfrak{G}$ to $\mathfrak{H}$ and whose morphisms are invertible natural transformations between functors.

Given symmetric monoidal categories $C$ and $D$, we write $\Fun^\otimes(C,D)$ for the groupoid whose objects are (strong not lax) symmetric monoidal functors from $C$ to $D$ and whose morphisms are natural equivalences. For an arbitrary commutative unital ring, we usually use $R$, and $\Proj(R)$ is the symmetric monoidal category of finitely presented projective $R$-modules with the usual tensor product. We use $\Gamma$ for a finite $p$-group.
\subsection*{\textbf{Acknowledgments}}
O.B. (University of Haifa) and N.G. (University of Pennsylvania and University of Haifa) would like to acknowledge Tony Pantev for helpful conversations during the writing of this article. We would also like to thank Pantev for funding from his NSF grant number 2021717 (part of the NSF-BSF program in collaboration with O.B. associated to Pantev's NSF award 2200914 with project Title: NSF-BSF: Derived and quantum corrected structures on arithmetic and geometric moduli). This funding, together with supporting funds from the Universities of Pennsylvania and Haifa supported N.G. as a postdoc. We would also like to thank the Bloom Graduate School of the University of Haifa for awarding a Bloom Scholarship Program Postdoctoral Research Grant to N.G.. We would like to thank Jonathan Fruchter, Minhyong Kim, Kobi Kremnizer, Masanori Morishita, Gabriel Navarro, Nick Rozenblyum, and Matthew Young for helpful conversations.

\newpage
\section{Preliminaries on Poincar\'{e} duality groups and pairs}\label{sect:prelim}
In this section, we will give a quick overview of the framework of Poincar\'{e} duality groups. These are some of the main objects which we will later use to replace manifolds.
The theory of relative group cohomology and  relative Poincar\'{e} duality groups was developed in \cite{Bierie-Eckmann-relative} for discrete groups, and in the profinite setting in \cite{wilkes2019relative}.
We give here the main definitions from \cite{wilkes2019relative} which we will use in this paper.

Given a profinite group $G$, the $\mathbb{Z}_p$-algebra $\mathbb{Z}_p[[G]]$ is defined by 
the completion of the group algebra $\mathbb{Z}_p[G]$ equipped with the topology where a system of open neighborhoods of $0$ is given by the kernels of the maps 
\[
\mathbb{Z}_p[G] \longrightarrow (\mathbb{Z}_p / p^n \mathbb{Z}_p)[G/U]
\]
where $U$ is an open normal subgroup of $G$ and $n$ is an integer.
If $M$ is a $G$-module in the category of profinite groups, then $\mathbb{Z}_p[[M]]$ becomes a module for $\mathbb{Z}_p[[G]]$ in the category of $\mathbb{Z}_p$-modules. Such a module will usually just be called a $G$-module if the meaning is clear.
\begin{Def}\label{Def:GroupPairs}
    We define a profinite group pair to be $(G,\mathcal{S})$, where $G$ is a profinite group, and $\mathcal{S}=\{S_i\}_{i\in I}$ is a finite family of (not necessarily distinct) closed subgroups of $G$.
    A morphism between group pairs $(G,\mathcal{S})\rightarrow(H,\mathcal{F})$ consists of a homomorphism $\phi:G\rightarrow H$ and a function between the index sets $f:J\rightarrow I$, such that $\phi(F_j)\subseteq S_{f(j)}$ for all $j\in J$
\end{Def}
\begin{Rem}
    More generally one can index the closed subgroups by a profinite space $X$ (see \cite{wilkes2019relative}), but for all purposes in this paper we will only need a finite indexing set.
\end{Rem}
Let $(G,\mathcal{S})$ be a profinite group pair.  The $G$-module $\mathbb{Z}_p[[G/\mathcal{S}]]$ is defined by a direct sum of $G$-modules:
\[\mathbb{Z}_p[[G/\mathcal{S}]] = \underset{S \in \mathcal{S}}\bigoplus \mathbb{Z}_p[[G/S]]\]
where each $\mathbb{Z}_p[[G/S]]$ is the $G$-module given by the completion of $\mathbb{Z}_p[G/S]$ with respect to the topology whose system of open neighborhoods of $0$ is defined by the kernels of the maps 
\[
\mathbb{Z}_p[G/S]\to (\mathbb{Z}_p / p^n \mathbb{Z}_p)[G/U]
\]
where $U$ is an open normal subgroup of $G$ containing $S$ and $n$ is an integer. The augmentation gives a short exact sequence of $G$-modules:
\[
0 \to \triangle_{(G, \mathcal{S})} \to \mathbb{Z}_p[[G/\mathcal{S}]] \to \mathbb{Z}_p \to 0.
\]
where $G$ acts trivially on  $\mathbb{Z}_p$.
\begin{Def}The category $\mathfrak{D}_p(G)$ is defined as the full subcategory of topological abelian groups whose objects are colimits of finite $p$-primary $G$-modules.
\end{Def}
\begin{Def} The pair $(G, \mathcal{S})$ is of type $pFP_\infty$ when $\triangle_{(G, \mathcal{S})}$ is of type $pFP_\infty$ as a $G$-module, meaning that it has a resolution by finitely generated projective $\mathbb{Z}_p[[G]]$-modules.
\end{Def}
\begin{Def}The pair $(G, \mathcal{S})$ is assigned a number $\cd_p(G, \mathcal{S})$ defined by 
\[
\cd_p(G, \mathcal{S}) = \max \{n \ \ | \ \ \exists A \in \mathfrak{D}_p(G) \ \ \text{such that} \ \ H^n(G, \mathcal{S}; A) \neq 0 \}
\]
\end{Def}
\begin{Def}\label{Def:pdnpair} The pair $(G, \mathcal{S})$ is a Poincar\'{e} duality pair of dimension $n$ at the prime $p$ (called a $\PD^n$ pair at $p$) if $(G, \mathcal{S})$ is of type $pFP_\infty$, $\cd_p(G, \mathcal{S})=n>0$, $H^k(G, \mathcal{S}; \mathbb{Z}_p[[G]])$ is trivial for all $k\neq n$ and that $H^n(G, \mathcal{S}; \mathbb{Z}_p[[G]])\cong \mathbb{Z}_{p}$ as a $\mathbb{Z}_{p}$-module. 
\end{Def}
    
For $n=2$ we have the following classification theorem by Wilkes \cite{wilkes2020classification} (and by Demushkin when $\mathcal{S}=\emptyset$):
\begin{Thm}\label{Thm:WilkesPDn}
    Let $G$ be a pro-$p$ group, and $\mathcal{S}$ a collection of subgroups of $G$ such that $(G,\mathcal{S})$ is a $\PD^2$ pair.
    \begin{enumerate}
        \item If $\mathcal{S}=\emptyset$ then $G$ can be presented as a pro-$p$ group by $2n$ generators $x_1,y_1,...,x_n,y_n$ and a single relation $x_1^{p^r}[x_1,y_1]\cdots[x_n,y_n].$
        \item Otherwise, $\mathcal{S}=\{S_0,S_1,...,S_b\}$ and $G$ is a free pro-$p$ group on $2n+b$ generators, such that it has a generating set $x_1,y_1,...,x_n,y_n,s_1,...,s_b$ and an element \[s_0:=x_1^{p^r}[x_1,y_1]\cdots[x_n,y_n]s_1 \cdots s_b,\] such that each $s_i$ generates some conjugate of $S_i$ for $i=0, \dots, n$.
    \end{enumerate}
\end{Thm}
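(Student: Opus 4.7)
The plan is to treat the two cases separately, both via Poincar\'e duality at $p$ combined with the theory of minimal pro-$p$ presentations (where the number of generators equals $\dim_{\mathbb{F}_p} H^1(G,\mathbb{F}_p)$ and the number of relators equals $\dim_{\mathbb{F}_p} H^2(G,\mathbb{F}_p)$).

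For case (1), Poincar\'e duality yields $H^0(G,\mathbb{F}_p)\cong H^2(G,\mathbb{F}_p)\cong\mathbb{F}_p$ and $H^1(G,\mathbb{F}_p)$ of finite dimension $d$. The cup product $H^1\otimes H^1\to H^2\cong\mathbb{F}_p$ is non-degenerate by duality, and alternating since $p$ is odd (by graded commutativity); hence $d=2n$ is even. Thus $G$ admits a pro-$p$ presentation with $2n$ generators and a single relator $r$. The main computational step is to normalize $r$: I expand $r$ in the successive quotients of the Zassenhaus filtration of the free pro-$p$ group $F$, read off its image in $F_2/F_3$ from the cup product and the Bockstein (which encodes the torsion of $H^1(G,\mathbb{Z}_p)$), and run a symplectic Gram--Schmidt to bring $r$ into the canonical form $x_1^{p^r}[x_1,y_1]\cdots[x_n,y_n]$, with the exponent $p^r$ determined by the Bockstein. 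This is Demushkin's classification.

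For case (2), the long exact sequence of the pair with coefficients in $\mathbb{Z}_p[[G]]$, combined with the vanishing in Definition \ref{Def:pdnpair} and Shapiro's lemma applied to each $S_i$, forces $H^2(G,\mathbb{Z}_p[[G]])=0$ when $\mathcal{S}\neq\emptyset$; hence $\cd_p(G)\le 1$ and $G$ is free pro-$p$. The same long exact sequence, restricted to individual $S_i$ summands, shows each $S_i$ is itself a pro-$p$ $\PD^1$ pair, hence $S_i\cong\mathbb{Z}_p$ for every $i$; let $s_i$ generate a chosen conjugate of $S_i$. An Euler characteristic computation on the pair then fixes the free rank of $G$ as $2n+b$ for a unique $n\ge 0$.

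The main obstacle is to produce a free basis $x_1,y_1,\ldots,x_n,y_n,s_1,\ldots,s_b$ of $G$ in which $s_0$ admits the prescribed expression $x_1^{p^r}[x_1,y_1]\cdots[x_n,y_n]s_1\cdots s_b$. I would use a pro-$p$ capping construction: amalgamating $G$ with the trivial group along each of $S_1,\ldots,S_b$ in turn (in the category of pro-$p$ groups) produces a pro-$p$ $\PD^2$ pair $(\tilde G,\{S_0\})$, and a further amalgamation along $S_0$ produces an absolute pro-$p$ $\PD^2$ group $\hat G$ to which case (1) applies. Lifting the Demushkin relation of $\hat G$ back through the sequence of capping amalgams---each of which expresses a killed boundary generator as a specified word in the free basis of the next stage---yields the required identification of $s_0$. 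Verifying that each intermediate capping does land in a pro-$p$ $\PD^2$ pair, and matching the parameters $n,r$ across stages, is the crux of Wilkes's argument.
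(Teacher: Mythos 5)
The paper does not actually prove Theorem \ref{Thm:WilkesPDn}: it is imported from Demushkin--Labute (\cite{labute1967classification}, for $\mathcal{S}=\emptyset$) and from \cite{wilkes2020classification} (the relative case), so there is no internal proof to compare against. Judged on its own terms, your sketch of case (1) is a fair outline of the Demushkin--Serre--Labute argument for odd $p$ (minimal presentation with one relator since $\dim H^2(G,\mathbb{F}_p)=1$, non-degenerate alternating cup product forcing $d=2n$, relator determined mod the third filtration step by the cup product and the Bockstein), though ``run a symplectic Gram--Schmidt'' conceals the real work, namely the inductive lifting of the normal form through the $q$-central (Zassenhaus) filtration of the free pro-$p$ group.

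Case (2) contains a genuine gap. Your capping construction replaces $G$ by $G/\langle\langle S_i\rangle\rangle$ at each stage and needs every intermediate quotient to remain a pro-$p$ $\PD^2$ pair. The gluing theorem you would want to invoke (Theorem \ref{Thm:amalLES}) does not cover this: it requires the edge group to inject into both vertex groups and the amalgam to be proper, whereas the ``disk'' $(\{1\};\mathbb{Z}_p)$ is one of the exceptional triples of Definition \ref{Def:GrpTriple} rather than an honest $\PD^2$ pair, and $\mathbb{Z}_p\to\{1\}$ is not injective. In fact the present paper argues in the opposite direction: in the proof of Proposition \ref{Prop:Gluing} the statement that capping preserves the $\PD^2$ property is deduced \emph{from} Theorem \ref{Thm:WilkesPDn}, so within this framework your route is circular, and establishing the capping step independently is essentially as hard as the theorem itself (properness of pro-$p$ amalgams being exactly the delicate point flagged in Remark \ref{Rem:amalgams}). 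Separately, even granting that the fully capped group $\hat G$ is Demushkin, ``lifting the relation back'' does not by itself yield a free basis of $G$ in which $s_0$ equals $x_1^{p^r}[x_1,y_1]\cdots[x_n,y_n]s_1\cdots s_b$ up to conjugacy of the $s_i$: a lift of a Demushkin basis of $\hat G$ need only satisfy that relation modulo the normal closures you killed. The proof in \cite{wilkes2020classification} (Theorem 3.3) instead constructs the standard basis directly by induction up the descending $q$-central series of the free group --- the very technique this paper reuses in Proposition \ref{Prop:mobius} --- and that inductive step is the missing core of your argument.
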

We will later split the boundary groups of these two-dimensional relative Poincar\'{e} duality groups into ``ingoing" and ``outgoing" boundaries (i.e. cobordisms).
\subsection{Orientation Character}
To a Poincar\'{e} duality group, one can associate an orientation character.
In dimension $2$, the orientation character together with the rank of the group determines the group.
\begin{Def}
    To a pro-$p$ $\PD^n$ pair $(G,\mathcal{S})$ we associate an orientation character $\chi:G\rightarrow \Aut(\mathbb{Z}_p)$ coming from the action of $G$ on $H^n(G, \mathcal{S}; \mathbb{Z}_p[[G]])\cong \mathbb{Z}_{p}$.
    If the map $\chi$ is trivial, we say that the pair $(G,\mathcal{S})$ is orientable.
     Denote by $r$, the largest integer for which the composition \[G \stackrel{\chi_r}\longrightarrow \Aut(\mathbb{Z}_p)\longrightarrow \Aut(\mathbb{Z}/p^r\mathbb{Z})
    \] is trivial. We refer to this $r$ as the orientability level of $(G,\mathcal{S})$.
\end{Def}
In dimension $2$, one has that a $\PD^2$ group $G$ is determined by its rank and $Im(\chi)$ (in fact just by the orientability level of $G$).
As seen in Theorem 4 of \cite{labute1967classification} (or by Proposition 1.5 and Theorem 2.2 in \cite{wilkes2020classification} to extend to the relative case), we have the orientation character of $G$ is the map \[\chi:G\rightarrow \mathbb{U}_p\]  defined by \[\chi(y_1)=(1-p^r)^{-1}\]  and \[\chi(y_i)=\chi(x_j)=\chi(s_k)=1\]
for all $i\neq 1$ and all $j,k$.

Let $K$ be a p-adic field, with maximal p Galois group $G_K(p)$, and $N=[K:\mathbb{Q}_p]$. 
Suppose that there are $p$-th roots of unity in $K$, one has that the orientation character is the p-part of the cyclotomic character, and the orientability level $r$ is the highest power of $p$, such that $p^r$ roots of unity $\mu_{p^r}$ are contained in $K$. The rank of $G_K(p)$ is $N+2$.
If there are no $p$-th roots of unity in $K$, then $G_K(p)$ is a free pro-$p$ group of rank $N+1$.

\subsection{Gluing relative Poincar\'{e} duality groups}\label{subsect:gluing}
In the classical setting of manifolds and TQFTs, a central tool for studying them, is cutting along sub-manifolds, and gluing boundaries.
We would like to have a way to look at such topological operations, from an algebraic point of view.
Van Kampen’s theorem gives us this desired connection.
\begin{Thm}[Van Kampen]\label{Thm:VanKampen}
Let $X_1$ and $X_2$ be subspaces of a topological space $X$ whose interiors cover $X$. Then the obvious functor gives an equivalence of categories
\[
\pi_{\leq 1}(X_1)\coprod_{\pi_{\leq 1}(X_1 \cap X_2)}^{h} \pi_{\leq 1}(X_2) \to \pi_{\leq 1}(X)
\]
from the homotopy pushout of the fundamental groupoids to the fundamental groupoid of $X$. If $X_1 \cap X_2$ is path connected and $c \in X_1 \cap X_2$ is a basepoint then the natural map
\[
\pi_{1}(X_1,c)*_{\pi_1(X_1 \cap X_2,c)} \pi_{1}(X_2,c) \to \pi_{1}(X,c)
.\]
is an isomorphism, where here $*$ refers to a discrete amalgamation.
\end{Thm}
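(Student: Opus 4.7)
The plan is to establish the groupoid version first and deduce the pointed statement as a corollary. First I would fix a cofibrant model for the homotopy pushout: since groupoids carry a model structure in which cofibrations are functors that are injective on objects, one can replace $\pi_{\leq 1}(X_1\cap X_2)\to \pi_{\leq 1}(X_i)$ by cofibrations so that the strict pushout computes the homotopy pushout. Concretely this pushout is the groupoid with object set $\ob \pi_{\leq 1}(X_1)\sqcup \ob \pi_{\leq 1}(X_2)$, freely generated by the morphisms of either factor together with an isomorphism $\iota_c$ identifying the two copies of each $c\in X_1\cap X_2$, modulo the relations coming from $\pi_{\leq 1}(X_1\cap X_2)$. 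The candidate functor $\Phi$ into $\pi_{\leq 1}(X)$ is forced by the universal property; the task is to show $\Phi$ is an equivalence of groupoids.

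Next I would establish essential surjectivity. On objects this is immediate since $X=X_1\cup X_2$. For morphisms, pick a representative path $\gamma\colon [0,1]\to X$. The Lebesgue covering lemma applied to the open cover $\{\gamma^{-1}(X_1^\circ),\gamma^{-1}(X_2^\circ)\}$ yields a subdivision $0=t_0<\cdots<t_n=1$ such that each $\gamma([t_{i-1},t_i])$ is contained in $X_1$ or $X_2$. Each intermediate endpoint $\gamma(t_i)$ lies in $X_1^\circ\cap X_2^\circ\subseteq X_1\cap X_2$, so the corresponding subpaths are morphisms of $\pi_{\leq 1}(X_1)$ or $\pi_{\leq 1}(X_2)$ that compose, together with the identifications $\iota_{\gamma(t_i)}$, to a preimage of the class of $\gamma$ in the pushout.

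The heart of the proof is faithfulness. Suppose two composites in the pushout have the same image in $\pi_{\leq 1}(X)$; lift them to concatenations of paths and pick a path-homotopy $H\colon[0,1]^2\to X$ between them. Applying the Lebesgue lemma to the open cover $\{H^{-1}(X_1^\circ),H^{-1}(X_2^\circ)\}$ of the square produces a rectangular grid so that $H$ sends each small sub-square entirely into $X_1$ or $X_2$. One then deforms the bottom boundary composite into the top composite one square at a time: each elementary move replaces two sides of a small square by the other two and is supported in $X_1$ or in $X_2$, so it lifts to a relation in the corresponding factor of the pushout groupoid. An induction on the number of sub-squares concludes that the two original composites are already equal before mapping into $\pi_{\leq 1}(X)$. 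The main obstacle is the bookkeeping at grid vertices: every intermediate path must be adjusted so its breakpoints lie in $X_1\cap X_2$, which is arranged by perturbing the grid slightly, using that $X_1^\circ$ and $X_2^\circ$ are open and intersect in a neighborhood of each vertex visited.

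Finally, the pointed statement follows by specialising the equivalence to the component of the basepoint. When $X_1\cap X_2$ is path connected, all three groupoids in the span are connected, so the homotopy pushout is a connected groupoid and is therefore equivalent to the automorphism group of $c$ viewed as a one-object groupoid. By the universal property of pushouts of groupoids based at a common object, this automorphism group is the amalgamated free product $\pi_1(X_1,c)*_{\pi_1(X_1\cap X_2,c)}\pi_1(X_2,c)$ in the category of discrete groups, and the equivalence established above restricts to the claimed isomorphism onto $\pi_1(X,c)$.
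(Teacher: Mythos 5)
The paper does not prove this statement: Theorem \ref{Thm:VanKampen} is quoted as classical background (it motivates replacing topological gluing by amalgams and homotopy pushouts of groupoids), so there is no internal proof to compare against. Your argument is the standard textbook proof of the groupoid-level Seifert--van Kampen theorem and is essentially sound: modelling the homotopy pushout by the strict pushout along cofibrations (the functors out of $\pi_{\leq 1}(X_1\cap X_2)$ are already injective on objects, hence cofibrations, so no replacement is actually needed), essential surjectivity and fullness by Lebesgue subdivision of paths, faithfulness by grid subdivision of a homotopy square, and the pointed statement via the computation $BG_1\coprod^{h}_{BH}BG_2\simeq B(G_1*_H G_2)$, which is exactly the identification the paper records in its appendix.

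Two small imprecisions are worth fixing. First, in the surjectivity step the transition points $\gamma(t_i)$ need only lie in $X_1\cap X_2$ (after merging consecutive subintervals mapping into the same piece), not in $X_1^\circ\cap X_2^\circ$; this is harmless since the pushout identifies the two copies of every object of $\pi_{\leq 1}(X_1\cap X_2)$. Second, the theorem as stated assumes only that $X_1\cap X_2$ is path connected, so your claim that ``all three groupoids in the span are connected'' does not follow; you should instead restrict to the path components of $c$ in $X_1$, $X_2$ and $X$ (which is legitimate because $\pi_1$ at $c$ only sees those components, and the component of $c$ in the pushout groupoid is built solely from them) before invoking the one-object reduction. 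With those adjustments the proof is complete and, while more elementary and self-contained than the paper's citation-only treatment, proves exactly the statement the paper uses.
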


In the purely group theoretic world, a framework for studying such gluing/cutting is Bass-Serre theory and graphs of groups (see \cite{serre2002trees}). In the pro-p/profinite world this theory can be found in \cite{ribes2017profinite}.
As explained in \cite{ribes2017profinite}, any such cutting can be seen as an iterated process (by standard arguments of collapsing subgraphs of a single edge and its end vertex/vertices, into a single vertex) of doing one of the following two operations:
\begin{Def}\label{Def:pro-p amalgamted}
Let $A$, $B$ and $C$ be pro-$p$ groups and $\varphi_{1}:C\rightarrow A$, $\varphi_{2}:C\rightarrow B$, be (continuous) monomorphisms of pro-$p$ groups.

The amalgamated free pro-$p$ product of $A$ and $B$ with amalgamated subgroup $C$ is a pushout in the category of pro-$p$ groups 
\[
\xymatrix{C\ar[d]_{\varphi_{2}}\ar[r]^{\varphi_{1}} & A\ar[d]_{f_{1}}\\
B\ar[r]^{f_{1}} & A*_{C}B
}
\]
In other words, a pro-$p$ group $G=A*_{C}B$, satisfying the following universal property: 
 
For any pair of homomorphisms $\psi_{1}:A\rightarrow K$, $\psi_{2}:B\rightarrow K$ into a pro-$p$ group $K$ with $\psi_{1}\circ \varphi_{1}=\psi_{2}\circ \varphi_{2}$, there exists a unique homomorphism $\psi:G\rightarrow K$ such that the following diagram is commutative:
\[
\xymatrix{C\ar[d]_{\varphi_{2}}\ar[r]^{\varphi_{1}} & G_{1}\ar[d]_{f_{1}}\ar@/^{1pc}/[ddr]^{\psi_{1}}\\
G_{2}\ar[r]^{f_{2}}\ar@/_{1pc}/[drr]_{\psi_{2}} & G\ar@{-->}[dr]^{\psi}\\
 &  & K
}
\]
\end{Def}

\begin{Def}\label{Def: pro-p HNN}
Let $A, C$ be pro-$p$ groups and $\varphi_1,\varphi_2:C\rightarrow A$ be two continuous injections from $C$ to a pro-$p$ group $A$, with images pro-$p$ closed subgroups of $A$.  The pro-$p$ HNN extension HNN (G. Higman, B. H. Neumann, H. Neumann) of $A$ with respect to $(\varphi_i,C)$, is a pro-$p$ group $G=HNN(A,\varphi_1,\varphi_2,C)=A*_{C}$, with an element $t\in G$ and a continuous homomorphism $f:A\rightarrow G$, with the following universal property: 

For any pro-$p$ group $K$, any $k\in K$ and any continuous homomorphism $\psi:A\rightarrow K$ satisfying $k\psi((\varphi_1(c)))k^{-1}=\psi(\varphi_2(c))$ for all $c\in C'$, there is a unique continuous homomorphism $\omega:G\rightarrow K$ such that the diagram
\[
\xymatrix{G\ar@{-->}[dr]^{\omega}\\
H\ar[u]^{f}\ar[r]^{\psi} & K
}
\]
 is commutative and $\omega(t)=k$.
\end{Def}

\begin{Rem}\label{Rem:amalgams}
    A few very important remarks:\begin{enumerate}
        \item Recall that for discrete groups with presentations $A=\langle \  S \ | \ R \ \rangle $, $B=\langle \ S' \ | \ R' \ \rangle $ one has that the discrete amalgamated free product $A*_{C}B$ has presentation \[\langle \ S, \ S' \ | \ R, \ R', \{\varphi_{1}(c)\varphi_{2}(c)^{-1} \ | \ \forall c\in C\} \ \rangle. \]
        Similarly for a discrete group $A$ with a presentation $A=\langle \ S \ | \ R \ \rangle$, the discrete HNN extension $A*_{C}$ has presentation
        \[\langle \ S, \ t \ | \ R, \ \{t\varphi_{1}(c)t^{-1}\varphi_{2}(c)^{-1} \ | \ \forall c\in C\} \rangle .\] The new generator $t$ is called the stable letter. 
        \item In the pro-p setting, the maps $f:A\rightarrow HNN(A,\varphi_1,\varphi_2,C)$, and $f_1:A\rightarrow A*_CB$, $f_2:B\rightarrow A*_CB$ need not be monomorphisms! 
        In the case that they are, we say that the pro-$p$ amalgamated product/HNN extension is \textbf{proper}.
        \item In general a pro-$p$ amalgamated product/HNN extension does not have a nice presentation as in the discrete case!
    \end{enumerate}
\end{Rem}

We have put some information on pushouts and homotopy pushouts of groupoids in the Appendix \ref{App:Cats} where the HNN extension can be defined more abstractly. 

In the case of surfaces and curves on them, we actually have that all such algebraic splittings come from topological ones: 
\begin{Thm} (\cite{zieschang2006surfaces} Theorem 4.12.1)
\label{Thm:sccsplit}Let $S$ be a connected surface,
then all splittings over $\mathbb{Z}$ of $\pi_{1}(S)$, into HNN
extensions or amalgamated free products, arise from cutting along a simple closed curve on the surface, and
we have a bijection between isotopy classes of simple closed curves on $S$,
and such splittings of $\pi_{1}(S)$ over $\mathbb{Z}$.
\end{Thm}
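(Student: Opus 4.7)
The plan is to construct maps in both directions and verify they are mutually inverse. The forward map, from an isotopy class of a simple closed curve $c \subset S$ to a splitting, is a direct application of Van Kampen (Theorem \ref{Thm:VanKampen}). If $c$ separates $S$ into pieces $S_1, S_2$, applying Van Kampen to a small tubular neighborhood of $c$ gives $\pi_1(S) \cong \pi_1(S_1) *_{\pi_1(c)} \pi_1(S_2)$, which is an amalgamation over $\mathbb{Z}$. If $c$ is non-separating, cutting along $c$ yields a connected surface $S'$ with two distinguished boundary circles, and Van Kampen produces an HNN extension $\pi_1(S) \cong \pi_1(S') *_{\mathbb{Z}}$ with stable letter coming from a loop crossing $c$ transversely once. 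Isotopic curves give conjugate inclusions of $\pi_1(c) \cong \mathbb{Z}$ and hence isomorphic splittings, so the forward map is well-defined.

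The substantive direction is the reverse. Given a splitting of $\pi_1(S)$ over $\mathbb{Z}$, Bass-Serre theory provides a minimal cocompact action of $\pi_1(S)$ on a simplicial tree $T$ with a single orbit of edges, each with stabilizer an infinite cyclic subgroup $Z \leq \pi_1(S)$. I would construct a $\pi_1(S)$-equivariant continuous map $f : \widetilde{S} \to T$ from the universal cover (available because $T$ is contractible and the action on $\widetilde{S}$ is free and cellular), and then equivariantly perturb $f$ into general position so that it is transverse to the midpoints of edges of $T$. The preimage $L = f^{-1}(\text{midpoints})$ is then a $\pi_1(S)$-invariant properly embedded $1$-submanifold of $\widetilde{S}$, descending to a compact $1$-submanifold $\bar{L} \subset S$, i.e.\ a disjoint union of simple closed curves.

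The main obstacle is to reduce $\bar{L}$ to a single essential simple closed curve whose Van Kampen splitting recovers the one we started with. Any component of $\bar{L}$ lifting to an embedded circle in $\widetilde{S}$ must bound a disk (since $\widetilde{S}$ is either $\mathbb{R}^2$ or $S^2$), and such inessential components can be eliminated by innermost-disk surgery on $f$ without altering the induced splitting, since they correspond to edge-loops in $T$ with trivial stabilizer. For the remaining essential components, each lifts to a properly embedded line in $\widetilde{S}$ stabilized by an infinite cyclic conjugate of $Z$; since they all map to the same orbit of edges in $T$ and two edges of a tree have trivial intersection of stabilizers, distinct essential components must be disjoint and cobound an annulus in $S$, i.e.\ they are mutually parallel. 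Discarding parallel duplicates then leaves a single simple closed curve $c$.

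Finally, the two constructions are mutually inverse by the uniqueness of minimal Bass-Serre trees associated to a splitting over $\mathbb{Z}$: the splitting produced from $c$ by Van Kampen has Bass-Serre tree $\pi_1(S)$-equivariantly isomorphic to $T$ via the induced transverse map, and conversely $c$ is recovered up to isotopy as the projection of $f^{-1}(\text{midpoint})$ for the chosen splitting. The step I expect to be the principal obstacle is the surgery-plus-parallelism argument that collapses the multi-component transverse system $\bar{L}$ into a single essential simple closed curve; this relies delicately on the planarity of $\widetilde{S}$ and on the malnormality-type behavior of cyclic edge stabilizers in surface groups, and it is precisely here that the hypothesis of splitting over $\mathbb{Z}$ (rather than over a larger subgroup) is essential.
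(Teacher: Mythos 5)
The paper does not prove this statement: it is quoted with a citation to Theorem 4.12.1 of \cite{zieschang2006surfaces} and used as a black box, so there is no internal proof to compare against. Judged on its own terms, your sketch follows the standard strategy (Van Kampen in one direction; an equivariant transverse map to the Bass--Serre tree, followed by surgery, in the other), which is indeed how results of this type are proved. But there are two concrete gaps.

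First, your forward map only treats two-sided curves. On a non-orientable surface a one-sided simple closed curve is automatically non-separating, yet cutting along it produces a surface with a \emph{single} new boundary circle (the double cover of the curve), not two; the associated splitting is the amalgam $\pi_1(S') *_{\mathbb{Z}} \mathbb{Z}$ in which the edge group includes into the M\"obius-band factor with index $2$, not an HNN extension with a stable letter crossing $c$ once. Since the bijection of the theorem must account for these curves (and since the present paper's entire setting is the non-orientable/partially orientable one), omitting them leaves the forward map, and hence the claimed bijection, incomplete; you should also say explicitly that null-homotopic and boundary-parallel curves must be excluded for the correspondence to be a bijection. Second, in the reverse direction the step ``distinct essential components must be disjoint and cobound an annulus'' does not follow from ``two edges of a tree have trivial intersection of stabilizers'': that assertion is false for a general splitting over $\mathbb{Z}$ (it amounts to malnormality of the edge group in the vertex groups, which is a consequence of the theorem rather than an a priori input), and even granting almost-malnormality of cyclic subgroups of surface groups, two disjoint \emph{non-parallel} essential curves can perfectly well occur as components of a transverse preimage, since being disjoint does not force them to be parallel. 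The correct reduction is a complexity/Euler-characteristic argument: the curve system realizes a graph-of-groups refinement of the given one-edge splitting, and one must collapse the edges dual to complementary pieces that are annuli or on which the edge group surjects onto the vertex group. As you anticipate, this is the crux, but the justification you offer for it does not work as stated.
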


We have the following gluing theorem \cite{wilkes2019relative} of Wilkes, which shows that proper gluings/cutting of $PD^n$ groups results in $PD^n$ groups:
\begin{Thm}\label{Thm:amalLES}
Let $G_1,G_2$ be pro-$p$ groups, $\mathcal{S}_1,\mathcal{S}_2$ possibly empty
finite families of subgroups of $G_1$, $G_2$ respectively.

\begin{enumerate}
    \item For a pro-$p$ $\PD^{n-1}$ group $L\neq G_1,G_2$, with continuous monomorphisms $\varphi_i:L\rightarrow G_i$, suppose that  $G_1*_L G_2$ is a proper amalgamated free product, then we have that $(G_1*_L G_2, \mathcal{S}_1\coprod \mathcal{S}_2)$ is a $\PD^n$ pair if and only if $(G_1,\mathcal{S}_1\coprod\varphi_1(L))$ and $(G_2,\mathcal{S}_2\coprod\varphi_2(L))$ are both $\PD^n$.

    \item For pro-$p$ $\PD^{n-1}$ subgroups $L,L'$ of $G_1$, with an isomorphism $\psi:L\rightarrow L',$  and suppose that $G_1*_{L,\psi}$ is a proper pro-$p$ HNN extension, then we have that $(G_1*_{L,\psi}, \mathcal{S}_1)$ is a $\PD^n$ pair if and only if $(G_1,\mathcal{S}_1\coprod L\coprod L')$ is $\PD^n$.
\end{enumerate}
\end{Thm}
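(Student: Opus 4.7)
The plan is to deduce both parts from Mayer--Vietoris long exact sequences attached to the pro-$p$ graph-of-groups structure, together with the cohomological characterization in Definition \ref{Def:pdnpair} of $\PD^n$ pairs via the modules $H^k(-,-;\mathbb{Z}_p[[G]])$.

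First, for the amalgamated case, I would set up the short exact sequence of $\mathbb{Z}_p[[G]]$-modules
\[
0 \to \mathbb{Z}_p[[G/L]] \to \mathbb{Z}_p[[G/G_1]] \oplus \mathbb{Z}_p[[G/G_2]] \to \mathbb{Z}_p \to 0,
\]
attached to $G = G_1 *_L G_2$, whose existence relies precisely on the properness hypothesis. Splicing this with the defining sequences $0 \to \triangle_{(G_i,\mathcal{S}_i\coprod\varphi_i(L))} \to \mathbb{Z}_p[[G_i/(\mathcal{S}_i\coprod\varphi_i(L))]] \to \mathbb{Z}_p \to 0$, inducing up to $G$ via the obvious maps, and then applying the pro-$p$ Shapiro lemma $H^k(G, \mathbb{Z}_p[[G/H]]\otimes A) \cong H^k(H, A|_H)$, produces a Mayer--Vietoris sequence
\[
\cdots \to H^{k-1}(L;A) \to H^k(G,\mathcal{S};A) \to H^k(G_1,\mathcal{S}_1\coprod\varphi_1(L);A)\oplus H^k(G_2,\mathcal{S}_2\coprod\varphi_2(L);A) \to H^k(L;A) \to \cdots
\]
valid for any $A\in\mathfrak{D}_p(G)$, where $\mathcal{S} = \mathcal{S}_1\coprod\mathcal{S}_2$. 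Both directions of the equivalence then become cohomological bookkeeping with $A = \mathbb{Z}_p[[G]]$: if both pieces are $\PD^n$ and $L$ is $\PD^{n-1}$, the non-vanishing is concentrated in degree $n-1$ for $L$ and degree $n$ for the two factors, so the sequence forces $H^k(G,\mathcal{S};\mathbb{Z}_p[[G]])$ to vanish for $k\neq n$ and to equal $\mathbb{Z}_p$ in degree $n$; the converse proceeds by rearranging the same exact sequence and using the $\PD^{n-1}$ hypothesis on $L$ to isolate the cohomology of each factor. Type $pFP_\infty$ is handled by splicing together finitely generated projective resolutions of the factors across a resolution of $L$.

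For the HNN case, the same strategy applies with the pushout sequence replaced by the HNN analogue
\[
0 \to \mathbb{Z}_p[[G/L]] \to \mathbb{Z}_p[[G/G_1]] \to \mathbb{Z}_p \to 0,
\]
where the first map encodes the difference of the two embeddings of $L$ into $G_1$ twisted by the stable letter. The resulting Mayer--Vietoris sequence has only one term on the right, and the identical dimensional tracking yields the equivalence.

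The main obstacle, and the reason the properness hypothesis is essential, is establishing that these short exact sequences of pro-$p$ modules genuinely exist --- this is precisely the content of Remark \ref{Rem:amalgams}(2). Without properness, the structural maps $L\hookrightarrow G$ can fail to remain injective after pro-$p$ completion, and $\mathbb{Z}_p[[G/L]]$ need not sit as a submodule of $\mathbb{Z}_p[[G/G_1]]\oplus\mathbb{Z}_p[[G/G_2]]$, so the whole Mayer--Vietoris machinery breaks. Once the sequence is in place (as developed in Ribes and in \cite{wilkes2019relative}), the pro-$p$ Shapiro lemma and the exact-sequence juggling proceed in direct parallel to the classical Bieri--Eckmann argument for discrete groups, and the remaining work is the careful identification of the dualizing $\mathbb{Z}_p$ in top degree under the boundary maps.
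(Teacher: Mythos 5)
First, a point of reference: the paper does not prove this statement at all --- it is imported verbatim from \cite{wilkes2019relative} (it is introduced as ``the following gluing theorem of Wilkes''), so there is no in-paper argument to compare against. Your proposal is a sketch of the standard Bieri--Eckmann strategy that the cited source does carry out in the profinite setting: build the coset-module short exact sequences from the Bass--Serre decomposition, derive a relative Mayer--Vietoris sequence via Shapiro, and read off duality. So the overall route is the right one.

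However, as written the sketch has two genuine gaps. The first is that your Mayer--Vietoris sequence is in the wrong order: the correct sequence has the glued pair receiving the map from the pieces, i.e.
\[
\cdots \to H^{k-1}(L;A) \to H^k(G_1,\mathcal{S}_1\coprod\varphi_1(L);A)\oplus H^k(G_2,\mathcal{S}_2\coprod\varphi_2(L);A) \to H^k(G,\mathcal{S};A) \to H^k(L;A) \to \cdots,
\]
whereas you placed $H^k(G,\mathcal{S};A)$ before the sum. This is not a cosmetic transposition: with your ordering the ``dimensional bookkeeping'' fails already for two annuli glued along a circle (in degree $n=2$ your sequence would force a surjection $\mathbb{Z}_p \to \mathbb{Z}_p^{\oplus 2}$), and with the correct ordering the top-degree conclusion is not automatic either --- one must identify $H^{n-1}(L;\mathbb{Z}_p[[G]])\cong \mathbb{Z}_p[[L\backslash G]]$ and $H^{n}(G_i,\cdot\,;\mathbb{Z}_p[[G]])\cong \mathbb{Z}_p[[G_i\backslash G]]$ (with their orientation twists) and check that the relevant cokernel is exactly $\mathbb{Z}_p$, which is dual to the coset-module sequence you started from. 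You acknowledge this last step but do not carry it out, and it is where the actual content lies, particularly for the converse implication, which is more than ``rearranging the same exact sequence''. The second gap is a coefficient mismatch: you establish the sequence ``for any $A\in\mathfrak{D}_p(G)$'' and then apply it with $A=\mathbb{Z}_p[[G]]$, but $\mathbb{Z}_p[[G]]$ is a profinite module and is not an object of $\mathfrak{D}_p(G)$ when $G$ is infinite; in the pro-$p$ setting one must separately extend the exact sequences and the Shapiro isomorphism to cohomology with profinite coefficients (defined via inverse limits), which is one of the technical points the cited reference exists to handle.
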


For $PD^2$ groups, these splittings were further studies in \cite{GropperArxiv}. We give a summarized version of the main results in \cite{GropperArxiv} which is needed for this paper:
\begin{Thm}\label{Thm:GropperSplit}
    Let $G$ be a pro-p $PD^2$ group (or a pro-p group in a $PD^2$ pair $(G,\mathcal{S})$). 
    Up to an automorphism of $G$, proper $\mathbb{Z}_p$ splitting of $G$, are in bijection with pro-p completion of a discrete $\mathbb{Z}$ splitting of certain explicit discrete groups.\\
    These discrete groups and splittings can be explicitly described in terms of generators and relations, and in particular, any mod $p$ compatible splitting of $G$ is proper, and any proper splitting can be made compatible mod $p$.
\end{Thm}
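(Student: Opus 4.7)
The plan is to reduce the problem to the corresponding classification of $\mathbb{Z}$-splittings in the discrete setting, where Bass--Serre theory together with Theorem \ref{Thm:sccsplit} provides a complete and geometric answer. By Theorem \ref{Thm:WilkesPDn}, every pro-$p$ $\PD^2$ group (or relative pair) admits an explicit presentation with generators $x_i, y_i, s_j$ and, in the non-free case, a single relation of the form $x_1^{p^r}[x_1,y_1]\cdots[x_n,y_n]s_1\cdots s_b$. Reading this same presentation in the category of discrete groups yields a Fuchsian (or free) orbifold group $G^{\mathrm{disc}}$ whose pro-$p$ completion is $G$; this is the ``certain explicit discrete group'' referred to in the statement. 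The strategy is to show that pro-$p$ completion induces a bijection between the relevant discrete splittings of $G^{\mathrm{disc}}$ and the proper pro-$p$ $\mathbb{Z}_p$-splittings of $G$, up to an automorphism of $G$.

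For the discrete classification, Theorem \ref{Thm:sccsplit} identifies $\mathbb{Z}$-splittings of $G^{\mathrm{disc}}$ with isotopy classes of simple closed curves on the underlying surface or orbifold. Each such curve produces, via van Kampen, either an amalgamated product or an HNN extension whose vertex groups and edge group are algorithmically computable by cutting the surface along the curve; this is what gives the explicit generators and relations promised in the statement. Pro-$p$ completion then produces a candidate $\mathbb{Z}_p$-splitting of $G$, and the first step is to show that this candidate is always proper when the corresponding curve is compatible mod $p$ (for example, when the cutting respects the torsion datum $p^r$ in the presentation), so that one obtains a well-defined map from discrete splittings to proper pro-$p$ splittings.

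The main technical obstacle, and the heart of the argument, is controlling properness in the pro-$p$ direction. As noted in Remark \ref{Rem:amalgams}, pro-$p$ amalgamations and HNN extensions can fail to be proper, so one must identify which pro-$p$ splittings actually descend to discrete ones. The key observation is that properness is detected by mod $p$ cohomology: via the Mayer--Vietoris long exact sequence associated to Theorem \ref{Thm:amalLES}, a $\mathbb{Z}_p$-splitting of $G$ is proper precisely when the amalgamating procyclic subgroup injects into each vertex group modulo $p$. Combined with Wilkes' presentation, this mod $p$ criterion is sufficient for properness; conversely, given any proper pro-$p$ splitting, a combinatorial normal-form argument on the standard presentation shows that one can apply an automorphism of $G$ to arrange that the amalgamating subgroup is generated by a conjugate of one of the $s_i$ (or an analogous word visible in the presentation), which is automatically mod $p$ compatible and visibly arises from pro-$p$ completing a simple closed curve on the discrete orbifold. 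Putting the two directions together yields the claimed bijection, with the mod $p$ compatibility/properness equivalence falling out as part of the correspondence.
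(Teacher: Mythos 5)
The paper does not actually prove Theorem \ref{Thm:GropperSplit}: it is imported verbatim as a summary of the main results of \cite{GropperArxiv}, so there is no internal proof to compare against. Judged on its own, your proposal has the right general shape (reduce to a discrete splitting classification and then control properness under pro-$p$ completion), but it contains concrete gaps. First, the discrete group obtained by reading Wilkes' presentation $\langle x_1,y_1,\dots,x_n,y_n \mid x_1^{p^r}[x_1,y_1]\cdots[x_n,y_n]\rangle$ over $\mathbb{Z}$ is \emph{not} a surface or Fuchsian orbifold group when $r<\infty$ and $p$ is odd: its abelianization is $\mathbb{Z}/p^r\oplus\mathbb{Z}^{2n-1}$, which is not the abelianization of any surface group, and the group is torsion-free so it is not a nontrivial orbifold group either. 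Consequently Theorem \ref{Thm:sccsplit}, which classifies $\mathbb{Z}$-splittings of $\pi_1(S)$ via simple closed curves, does not apply to your $G^{\mathrm{disc}}$ as stated, and the ``explicit'' discrete classification you lean on is exactly the nontrivial content that \cite{GropperArxiv} has to supply.

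Second, your properness criterion is circular: the Mayer--Vietoris-type statement of Theorem \ref{Thm:amalLES} is only available for amalgams and HNN extensions that are already assumed proper, so it cannot be used to \emph{detect} properness. Properness of pro-$p$ amalgamated products is precisely the delicate point flagged in Remark \ref{Rem:amalgams}(2)--(3), and ``the edge group injects into each vertex group mod $p$'' is at best a necessary condition whose sufficiency you assert without argument. Relatedly, you have misread the mod-$p$ compatibility condition: per the Remark following the theorem it concerns the gluing units $\alpha_i\in\mathbb{U}_p$ all being congruent to a single $c\in U(\mathbb{Z}/p)$ with respect to standard bases, not a cohomological injectivity condition or respect for the torsion datum $p^r$. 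So the equivalence ``mod $p$ compatible $\Leftrightarrow$ proper'' is being argued for the wrong condition, and the two central claims of the theorem (properness of mod-$p$ compatible splittings, and the normal-form statement that every proper splitting can be made mod-$p$ compatible) remain unproved in your sketch.
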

\begin{Rem}
    By mod $p$ compatible splitting/graph of pro-p relative $PD^2$ groups, we mean the following:
    Given a choice of standard form basis (in the sense of \cite{wilkes2020classification}) of $G_k, H_l$, we get a choice of generators $u_i, m_j$ and we have that $\phi(u_i)=(m_j)^{\alpha_i}$  for some $\alpha_i \in \mathbb{U}_{p}$ .
    We say such gluing is mod $p$ orientation compatible, if there are standard form basis of $G_k,H_l$, and $c\in U(\mathbb{Z}/p)$  such that $\alpha_i
    \equiv c$ mod $p$  for all $i$.

   One reason to think of this condition as natural is that all pro-$p$ $\PD^2$ pairs are oriented mod $p$ (that is their orientation character is always trivial mod $p$), and so we only allow gluings which are``orientation preserving" mod $p$.
\end{Rem}
\newpage
\section{Pro-p Topological Quantum Field Theories}\label{sect:MainDefs}
\subsection{Group Triples and pro-p TQFTs}\label{subsect:TQFT}
In this section with a definition for pro-$p$ TQFTs, which follows the spirit of the definition given in \cite{turaev2006unoriented}. We replace their ``unoriented cobordisms" with $PD^n$ group triples.
\begin{Def}\label{Def:GrpTriple}
    Define a pro-p $PD^n$ group triple to be a $(G,\mathcal{N}, \mathcal{U})$, where $G$ is a pro-p group, and $\mathcal{N}=\{N_i\}_{i\in I}, \mathcal{U}=\{U_j\}_{j\in J}$ are finite collections of pro-p groups (which we refer to as the "in boundary of $G$, and the "out boundary of $G$ respectively), such that one of the following occurs:
    \begin{enumerate}
        
    \item  $(G,\mathcal{S})$ is a $\PD^n$ pair where $\mathcal{S}:=\mathcal{N}\coprod\mathcal{U}$ (and so all $N_i$, $U_j$ are closed subgroups of $G$).
    
   \item $n>1$ and the tuple is $(\{1\}; \emptyset,\emptyset)$ this can be thought of as an $n$-sphere.
    \item     $n>2$ and the tuple is either $(\{1\}; \{1\},\emptyset)$ or $(\{1\}; \emptyset, \{1\})$ 
    which can be thought of as an $n$-ball, with an in (resp. out) boundary the $(n-1)$-sphere.
    \item $n=2$ and the tuple is $(\{1\}; \{\mathbb{Z}_{p}\},\emptyset)$, $(\{1\}; \emptyset, \{\mathbb{Z}_{p}\})$ which can be thought of as a disk, with in (resp. out) boundary a circle.
   \end{enumerate}
   An isomorphism of triples $(G,\mathcal{N},\mathcal{U})$, $(H,\mathcal{M},\mathcal{V})$ are a triple of isomorphisms $\phi:G\rightarrow H$, $\phi_{in}:\mathcal{N}\rightarrow\mathcal{M}$, $\phi_{out}:\mathcal{U}\rightarrow\mathcal{V}$, which are compatible with the maps from the boundaries to $G$.
   These maps are simply the inclusion map in case (1), and the map to the trivial group in the other cases.

   This is equivalent to saying that isomorphisms are only between group triples of the same type.
   For type (1), an isomorphism of triples $(G,\mathcal{N},\mathcal{U})$, $(H,\mathcal{M},\mathcal{V})$ is an isomorphism $\phi:G\rightarrow H$, such that it induces an isomorphism from each $N_i\in\mathcal{N}$ to some $M_k\in\mathcal{M}$, and from each $U_j\in\mathcal{U}$ to some $V_l\in\mathcal{V}$, in a way that gives a bijection from $\mathcal{N}$ to $\mathcal{M}$ and from $\mathcal{U}$ to $\mathcal{V}$.
   Types (2) and (3) only have the trivial isomorphism.
   And isomorphism of a type (4) triple $(\{1\}; \{\mathbb{Z}_{p}\},\emptyset)$ is simply one induced from an isomorphism of two infinite cyclic pro-$p$ groups, and similarly for a type (4) triple $(\{1\};\emptyset,\{\mathbb{Z}_{p}\})$ (but not between the two).
   \end{Def}

We have the following slight generalization of Theorem $\ref{Thm:amalLES}$, which shows that pro-p group triples glue nicely:
   
\begin{Prop}\label{Prop:Gluing}
    Given two collections of pro-p $PD^n$ group triples, \[\{(G_k; \mathcal{N}_{k},\mathcal{U}_{k})\}_{k \in K}, \{(H_l; \mathcal{M}_{l}, \mathcal{V}_{l})\}_{l \in L},\] denote by  $\mathcal{U}=\underset{k\in K}\coprod\mathcal{U}_k=\{U_i\}_{i\in I}$, $\mathcal{M}=\underset{l\in L}\coprod\mathcal{M}_l$.
    For an isomorphism between families: $\{\psi_i\}_{i\in I}=\psi:\mathcal{U}\rightarrow\mathcal{M}$, if the graph of pro-$p$ groups coming from gluing along the isomorphisms in the isomorphism of families $\psi$ is proper, then we get a new family of group triple \[\{(E_j; \mathcal{N}_{j},\mathcal{V}_{j})\}_{j \in J}:=\{(H_l; \mathcal{M}_{l}, \mathcal{V}_{l})\}_{l \in L}\circ_\psi \{(G_k; \mathcal{N}_{k}, \  \mathcal{U}_{k})\}_{k \in K}\] (where for some partition between $\mathcal{N}$ and $\mathcal{V}$). 

\end{Prop}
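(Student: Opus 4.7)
The plan is to reduce the multi-edge gluing to a sequence of single edges and then apply Theorem \ref{Thm:amalLES} inductively. The isomorphism of families $\psi$ picks out, for each $i \in I$, a matching subgroup $\psi_i(U_i) \in \mathcal{M}$, and each such matching is an edge of the bipartite graph whose vertices are the $G_k$ and $H_l$. Since any graph of pro-$p$ groups can be collapsed one edge at a time by successively identifying an edge with one of its endpoints (standard Bass--Serre manoeuvre, cf.\ \cite{ribes2017profinite}), and since properness of the whole graph is inherited by each intermediate graph, it suffices to handle two elementary moves: (i) amalgamating two distinct vertex groups along a single identified boundary, and (ii) forming an HNN extension by identifying two boundary subgroups of one and the same vertex group.

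First I would dispose of the degenerate types (2)--(4). The sphere, ball, and disk triples have trivial or free cyclic boundary groups with trivial vertex group, so gluing them to a genuine $PD^n$ triple has a purely combinatorial effect: a type (3) or (4) triple with an out-boundary glued to an in-boundary of $(G,\mathcal{N},\mathcal{U})$ simply removes that boundary from $\mathcal{N}$ (and symmetrically for out-boundaries). The result is trivially still a $PD^n$ triple, because the underlying group is unchanged and the remaining boundary family is a sub-family of the original. The sphere triple of type (2) appears as an isolated component and contributes a separate triple in the output. So after this reduction, every remaining elementary gluing is between triples of type (1).

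The core of the argument is then the inductive step. Suppose we are at a stage where all vertex triples are still of type (1) and we perform one edge collapse. In case (i) we have two triples $(G_1,\mathcal{N}_1,\mathcal{U}_1)$ and $(G_2,\mathcal{N}_2,\mathcal{U}_2)$ with $U \in \mathcal{U}_1$, $M \in \mathcal{N}_2$ (or in $\mathcal{U}_2$, with analogous bookkeeping), and a chosen isomorphism $\psi: U \xrightarrow{\sim} M$. Both $(G_1, \mathcal{N}_1 \coprod \mathcal{U}_1)$ and $(G_2, \mathcal{N}_2 \coprod \mathcal{U}_2)$ are $PD^n$ pairs; the subgroups $U$ and $M$ are boundary components of $PD^n$ pairs and hence are $PD^{n-1}$ groups. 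By the properness hypothesis we may invoke part (1) of Theorem \ref{Thm:amalLES} with $L = U \cong M$, concluding that $(G_1 *_L G_2,\ (\mathcal{N}_1 \coprod \mathcal{U}_1 \setminus \{U\}) \coprod (\mathcal{N}_2 \coprod \mathcal{U}_2 \setminus \{M\}))$ is a $PD^n$ pair. Repartitioning the surviving boundaries into the new in/out collections (the $\mathcal{N}_k$'s feed the new $\mathcal{N}_j$, the $\mathcal{V}_l$'s feed the new $\mathcal{V}_j$) yields the asserted triple structure on the amalgam. Case (ii), where both identified boundaries lie on the same vertex triple, is handled identically using part (2) of Theorem \ref{Thm:amalLES}.

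The main obstacle will be bookkeeping rather than a genuine mathematical difficulty: one must verify that each intermediate edge collapse remains a proper amalgam or HNN extension, and that the classification of boundaries as ``in'' or ``out'' is preserved consistently under iteration. Properness of each step follows because a sub-graph of a proper graph of pro-$p$ groups is again proper (the injectivity of each edge-group inclusion into the full fundamental group restricts to injectivity into any intermediate collapse, which is itself a retract in the category of pro-$p$ groups by the universal property). With this observed, the induction on the number of edges of each connected component of the gluing graph terminates with the desired family of $PD^n$ triples $\{(E_j;\mathcal{N}_j,\mathcal{V}_j)\}_{j \in J}$, completing the proof.
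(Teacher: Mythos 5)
There is a genuine gap in your treatment of the degenerate triples of types (2)--(4). You assert that gluing a cap or cup $(\{1\};\{\mathbb{Z}_p\},\emptyset)$ onto a boundary of a type (1) triple ``simply removes that boundary from $\mathcal{N}$'' with ``the underlying group unchanged,'' and that the result is ``trivially still a $PD^n$ triple.'' Both claims are false. The gluing is a pushout in $\pGrp$, so attaching $(\{1\},\mathbb{Z}_p)$ along $S_i\in\mathcal{S}$ replaces $G$ by the quotient $G/\langle\langle S_i\rangle\rangle$ --- exactly as filling a boundary circle with a disk kills the boundary loop (compare relation (R1), where capping one leg of the pair of pants $\langle a,b\rangle$ yields the cylinder $\langle b\rangle$, not $\langle a,b\rangle$ with one boundary forgotten). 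Moreover, even if one could keep $G$ fixed, deleting a boundary component from a $\PD^n$ pair does not yield a $\PD^n$ pair: by Theorem \ref{Thm:WilkesPDn} the rank of $G$ is tied to $|\mathcal{S}|$, so e.g.\ $(\langle a,b\rangle,\{S_0,S_1\})$ is not a $\PD^2$ pair. The paper's proof spends most of its effort precisely here: it shows the quotient is again a $\PD^2$ pair provided $\langle\langle S_i\rangle\rangle$ meets the other boundary subgroups trivially, invokes Wilkes' classification when $|\mathcal{S}|=1$, and tracks the degenerate outcomes (an annulus $G=\mathbb{Z}_p$ capped off becomes a disk $(\{1\},\mathbb{Z}_p)$; a disk capped off becomes a sphere $(\{1\},\emptyset)$). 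None of this case analysis can be skipped, and your argument as written would not detect these degenerations.

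For the remaining case (all vertex groups nontrivial), your edge-by-edge induction via Theorem \ref{Thm:amalLES} is a legitimate alternative to the paper's direct appeal to the graph-of-groups gluing theorem (\cite{wilkes2019relative} Theorem 5.18), and your properness claim for intermediate collapses is salvageable: if a vertex group injects into the fundamental group of the whole graph and that injection factors through an intermediate collapse, it injects into the intermediate collapse (though this is a factoring argument, not a retraction as you state). But the proposal cannot stand without a correct treatment of the caps and cups.
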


\begin{proof}
We will form a graph of groups, with vertices $\{G_k\}_{k \in K} \coprod \{H_l\}_{l \in L}$, and an edge between them for every $\psi_i\in\psi$. Let $J$ be an index set, of size equal to the number of connected components of this graph.\\
For each $j\in J$, we denote by $E_m$ the fundamental group of the $j$-th connected component. We get a triple $(E_j; \mathcal{N}_{j}, \mathcal{V}_{j})$ where $\mathcal{N}_{j}$ is the union of the $\mathcal{N}_{k}$ for all the vertices $G_k$ in the $j$-th connected component, and similarly for $\mathcal{V}_{j}$.
We claim that the triples $(E_j, \mathcal{N}_{j},\mathcal{V}_{j})$ are pro-p $PD^n$ triples for all $j$.

 We start by gluing triples in the family, which have only copies of the trivial group.
 By definition \ref{Def:GrpTriple}, we have that when $n=2$, the triple must be $(\{1\};\emptyset,\emptyset)$ and so there is no subgroup to glue, and we will just get that the resulting family of triples will have some component of the form $(\{1\};\emptyset,\emptyset)$.
 If $n>2$ then we either have the same situation as above or that the component is either $(\{1\};\emptyset,\{1\})$ or $(\{1\};\{1\} ,\emptyset)$.
 Since $\psi$ is an isomorphism we either have that $(\{1\};\emptyset,\{1\})$ glues to $(\{1\};\{1\} ,\emptyset)$ and the glued triple there will be again $(\{1\};\emptyset,\emptyset)$, or that we glue  $(\{1\};\{1\} ,\emptyset)$ to nothing resulting in $(\{1\};\{1\} ,\emptyset)$, or we glue nothing to $(\{1\};\emptyset,\{1\})$ resulting in $(\{1\};\emptyset,\{1\})$.\\

 We now deal with the case for gluing $(\{1\};\mathbb{Z}_p,\emptyset)$. 
 We will work by gluing in such caps one at a time. Gluing a $\PD^n$ pair $(G,\mathcal{S})$ to a pair $(\{1\},\mathbb{Z}_p)$ corresponds to taking a quotient by $S_i$ where $S_i\in \mathcal{S}$ is the subgroup which $\psi$ glues along. If $|\mathcal{S}|\geq 2$ and the normal subgroup generated by $S_i$ intersects all other $S_{i'}$ trivially (this is true if $|\mathcal{S}|>2$ or $|\mathcal{S}|=2$ and $S_{i'}\not\subseteq \langle \langle S_i \rangle \rangle$), we get that the quotient is again a $\PD^2$ pair, and if $|\mathcal{S}|=1$ then the quotient is a $\PD^2$ by the classification of Wilkes (Theorem \ref{Thm:WilkesPDn}). 
 We are left with the case of $(G;\{S_1\},\{S_2\})$.
 If $G$ is generated by more than one element, then again the quotient is a $\PD^2$ pair in the usual sense. Otherwise we get $G=\mathbb{Z}_p$ and the quotient will be $(\{1\},\mathbb{Z}_p)$, or $G=\{1\}$ and the quotient is $(\{1\},\emptyset$). So for any connected component with $G=\{1\}$ we got a connected cobordism.

We are now left with gluings such that all $G\neq\{1\}$.
In these cases, since all vertex groups in the connected component were non trivial, we get that $(E_j, \mathcal{N}_{j}\coprod\mathcal{V}_{j})$ is formed as a gluing in the sense of classical graphs of groups, and so by \cite{wilkes2019relative} Theorem 5.18 is a pro-p $\PD^n$ pair (by first collapsing all non loop edges who's edge group equals the vertex group to get a reduced graph of groups, as explained in \cite{wilkes2019relative} before Theorem 5.18). 

Overall we indeed get that the resulting family is a family of pro-p $PD^n$ triples. we get the new cobordism of the claim: \[\{(E_j; \mathcal{N}_{j}, \mathcal{V}_{j})\}_{j \in J}.\] 
\end{proof}
\begin{Rem}
    We will usually remove the index set and simply write the gluing as:
    \[(E_J; \mathcal{N}_{J},\mathcal{V}_{J})=(H_L; \mathcal{M}_{L}, \mathcal{V}_{L})\circ_\psi (G_K; \mathcal{N}_{K}, \  \mathcal{U}_{K})\]
\end{Rem}
Now that we know how to glue pro-p $PD^n$ group triples, we turn to define a TQFT on families of such triples:
\begin{Def} \label{Def:TQFT-TurnerTaraev}
    Given a commutative ring with unity $R$, we define a $(n+1)$ $R$-TQFT (over $p$) as the following assignment: 
    To any family of pro-$p$ $\PD^n$  groups $\mathcal{M}$ assigns a projective $R$–module of finite type $A_M$. For any family of  pro-$p$ $PD^{n+1}$ triples, $\{(G_k; \mathcal{N}_{k}, \mathcal{U}_{k})\}_{k \in K}$, an $R$-homomorphism \[\zeta((G_k)_{k\in K}):A_{\mathcal{N}}\rightarrow A_{\mathcal{U}}\]
    
    To any isomorphism of pro-$p$ $\PD^n$ groups $f:M\rightarrow M'$ we give, an $R$–isomorphism $f_{\#}:A_M\rightarrow A_{M'}$ such that the following seven conditions hold.
 \begin{enumerate}\label{TT}
    \item[T1.] 
    For the empty collection of $\PD^n$ groups $\emptyset$, we have $A_{\emptyset}=R$.
     \item[T2.] 
     For collections of pro-$p$ $\PD^n$ groups $\mathcal{N}$ and $\mathcal{M}$, we have an isomorphism \newline $A_{\mathcal{N}\coprod\mathcal{M}} \cong A_{\mathcal{N}}\otimes_RA_{\mathcal{M}}$ which is natural with respect to isomorphisms of collections of $\PD^n$ groups and compatible with the labelings.
     \item[T3.] 
     For any two isomorphisms of pro-$p$ $\PD^n$ groups $f:M\rightarrow M'$, $f':M'\rightarrow M''$, we have:
     $(f'\circ f)_{\#}=f_{\#}'\circ f_{\#}$.
      \item[T4.] 
      The homomorphism $\tau$ is natural with respect to isomorphisms of group triples.
In other words, given an isomorphism \[\psi:\{(G_k, \mathcal{N}_{k}, \mathcal{U}_{k})\}_{k \in K}\rightarrow\{(H_l; \mathcal{M}_{l}, \mathcal{V}_{l})\}_{l \in L}\] we have that the following diagram commutes:
\[
\xymatrix{A_{\mathcal{N}^{k}}\ar[d]_{(\psi|_{\mathcal{N}^{k}})_{\#}}\ar[r]^{\zeta(G_k)} & A_{\mathcal{U}^{k}}\ar[d]^{(\psi|_{\mathcal{U}^{k}})_{\#}}\\
A_{\mathcal{M}^{m}}\ar[r]_{\zeta(H_k)} & A_{\mathcal{V}^{m}}}
\]
      \item[T5.] 
      For a family of  pro-$p$ $PD^{n+1}$ triples $\{(G_k, \mathcal{N}_{k}, \mathcal{U}_{k})\}_{k \in K}$ we have $\zeta((G_k)_{k\in K})=\otimes_{k\in K} \zeta(G_k)$, under the isomorphism of condition T2.
      \item[T6.]  
      Given a pro-$p$ $\PD^n$ group $M$, we can also consider it as the triple $(M;\{M\},\{M\})$. For this a pro-p $PD^{n+1}$ triple, and we have that: \newline
      \[\zeta(M;\{M\},\{M\})=\id :A_M\rightarrow A_M\]
      \item[T7.]
       Suppose we have two families of pro-p $PD^{n+1}$ triples $\{(G_k; \mathcal{N}_{k}, \mathcal{U}_{k})\}_{k \in K}$, $\{(H_l; \mathcal{M}_{l}, \mathcal{V}_{l})\}_{l \in L}$,
    which we glue along an isomorphism $f:\mathcal{U} \to \mathcal{M}$ to get a new family of pro-p $PD^{n+1}$ triples  $\{(E_j; \mathcal{N}_{j}, \mathcal{V}_{j})\}_{j \in J}$. \newline 
    We have: \[\zeta((E_j)_{j\in J})=\zeta((H_l)_{l\in L}) \circ f_{\#} \circ \zeta((G_k)_{k\in K}):A_\mathcal{N}\rightarrow A_\mathcal{V}\] 
 \end{enumerate}  

 Two TQFTs $(A,\zeta)$ and $(A',\zeta')$ are isomorphic if for any pro-$p$ $\PD^n$ group $M$ we have an isomorphism $\eta_M:A_M\rightarrow A'_M$ such that:\begin{enumerate}
     \item $\eta_\emptyset=\id_R$
     \item $\eta_{M\coprod N}=\eta_M\otimes\eta_N$
     \item for any isomorhpism $f:M\rightarrow M'$ we have $\eta_{M'}\circ f_\#=f_\# \circ \eta_M$
     \item for any family of pro-p $PD^{n+1}$ triples
 $\{(G_k; \mathcal{N}_{k}, \mathcal{U}_{k})\}_{k \in K}$, we have \[\zeta'(G) \circ \eta_{\mathcal{N}_{k}}=\eta_{\mathcal{U}_{k}} \circ \zeta(G).\]
 \end{enumerate}
Let $\TQFT^{n}_{p}(R)$ be the category of TQFTs defined as above.
\end{Def} 
\begin{Rem}\label{Rem:TensorCompt}
    In axiom (T2) above we mean the following:
    We have that $\mathcal{N}\coprod\mathcal{M}\neq\mathcal{M}\coprod\mathcal{N}$ but rather there is a canonical swapping morphism, and similarly for $\otimes_R$, and so by compatible with the labeling we mean that one has the following:
    \begin{enumerate}
        \item For any two collections of pro-p $PD^n$ groups, the diagram:
        \[
\xymatrix{A_{\mathcal{N}\coprod\mathcal{M}}\ar[d]_{}\ar[r]^{\sim} & A_{\mathcal{N}}\otimes_R A_{\mathcal{M}}\ar[d]^{\textbf{Perm}}\\
A_{\mathcal{M}\coprod\mathcal{N}}\ar[r]^{\sim} & A_{\mathcal{M}}\otimes_R A_{\mathcal{N}}}
\]
        where Perm is the map $x\otimes_R y\mapsto y\otimes_R x$, is commutative.
        \item For any three pro-p $PD^n$ groups $M,N,U$ denote $\mathcal{M}=\{M\},\mathcal{N}=\{N\},\mathcal{U}=\{U\}$ one has that:
            \begin{equation*}
            \begin{split}
            (A_{\mathcal{M}}\otimes_R A_{\mathcal{N}})\otimes_R A_{\mathcal{U}} & \cong A_{\mathcal{M}}\otimes_R A_{\mathcal{N}\coprod\mathcal{U}}\cong A_{\mathcal{M}\coprod\mathcal{N}\coprod\mathcal{U}} \\ & \cong A_{\mathcal{M}\coprod\mathcal{N}}\otimes_R A_{\mathcal{U}}\cong A_{\mathcal{M}}\otimes_R (A_{\mathcal{N}}\otimes_R A_{\mathcal{U}})
            \end{split}
            \end{equation*}
        is the identification $(m\otimes n)\otimes u=m\otimes(n\otimes u)$.
        \item For any two isomorphisms between families of pro-p $PD^n$ groups, $f:\mathcal{N}\rightarrow\mathcal{N}'$, $g:\mathcal{M}\rightarrow\mathcal{M}'$ the diagram:
        \[
\xymatrix{A_{\mathcal{N}\coprod\mathcal{M}}\ar[d]_{(f\coprod g)_\#}\ar[r]^{\sim} & A_{\mathcal{N}}\otimes_R A_{\mathcal{M}}\ar[d]^{f_\#\otimes_R g_\#}\\
A_{\mathcal{N}'\coprod\mathcal{M}'}\ar[r]^{\sim} & A_{\mathcal{M}'}\otimes_R A_{\mathcal{N}'}}
\]
        is commutative.
    \end{enumerate}
\end{Rem}
\begin{Rem}
Later we will explain how in the $(1+1)$ dimensional case this is equivalent to studying symmetric monoidal functors from the category of cobordisms in the sense of cospans of groups.    
\end{Rem}

From now on we will focus our study on $(1+1)$-dimensional TQFTs and group triples.

We finish this section with the following classification of group triples for the $(1+1)$ case.
\begin{Lem} \label{Lem:invariants}
To each triple of pro-p $PD^2$ groups, we can associate an invariant $(g,r,n,u)$, where $g,n,u\in\mathbb{Z}_{\geq 0}$,  $r\in\mathbb{Z}_{> 0}\cup\{\infty\}$ if $g\neq 0$, and $r=\infty$ when $g=0$.\\
    If two cobordisms have the same invariants $(g,r,n,u)$ then they are isomorphic.
    Conversely for any tuple $(g,r,n,u)$, with $g,n,u\in\mathbb{Z}_{\geq 0}$,  $r\in\mathbb{Z}_{> 0}\cup\{\infty\}$ if $g\neq 0$, and $r=\infty$ when $g=0$, there is a cobordism with such an invariant. 
\end{Lem}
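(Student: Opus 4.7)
The plan is to read the invariants $(g,r,n,u)$ directly off Wilkes' classification (Theorem~\ref{Thm:WilkesPDn}) and then use the same classification for both the uniqueness and existence claims. For a type~(1) triple $(G;\mathcal{N},\mathcal{U})$, put the pair $(G,\mathcal{N}\coprod\mathcal{U})$ in Wilkes' standard form and set $g$ equal to the number of commutator pairs, $r$ equal to the exponent in the distinguished factor $x_1^{p^r}$ (with the convention $r=\infty$ when this factor is absent, which in particular covers $g=0$), $n=|\mathcal{N}|$, and $u=|\mathcal{U}|$. The degenerate triples of types~(2)--(4) of Definition~\ref{Def:GrpTriple} are each assigned their own tuple by inspection: the ``sphere'' $(\{1\};\emptyset,\emptyset)$ gets $(0,\infty,0,0)$, and the two ``disks'' get $(0,\infty,1,0)$ and $(0,\infty,0,1)$. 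Well-definedness is immediate: $g$, $r$, and $|\mathcal{N}\coprod\mathcal{U}|$ are invariants of the pair $(G,\mathcal{N}\coprod\mathcal{U})$ by Wilkes, while $|\mathcal{N}|$ and $|\mathcal{U}|$ are preserved by the very definition of isomorphism of triples.

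For the converse, given two type~(1) triples with the same invariants, Wilkes produces an isomorphism of pairs $\phi\colon (G,\mathcal{N}\coprod\mathcal{U})\to(H,\mathcal{N}'\coprod\mathcal{U}')$. Such a $\phi$ induces a bijection of boundary families but does not a priori respect the partition into in- and out-boundaries. Since $|\mathcal{N}|=|\mathcal{N}'|$ and $|\mathcal{U}|=|\mathcal{U}'|$, to finish I would postcompose $\phi$ with an automorphism of $(H,\mathcal{N}'\coprod\mathcal{U}')$ realizing a suitable permutation of the boundary family. The main obstacle is therefore producing a large enough automorphism group: the goal is that $\Aut(H,\mathcal{N}'\coprod\mathcal{U}')$ surjects onto the full symmetric group on the boundary family. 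Transpositions of any two $s_i,s_j$ with $i,j\geq 1$ are realized by Nielsen-style automorphisms of the ambient free pro-$p$ group, and the distinguished boundary $s_0$ is put on equal footing with the others by solving the standard-form relation $s_0=x_1^{p^r}[x_1,y_1]\cdots[x_g,y_g]s_1\cdots s_b$ for any chosen $s_i$ and making the accompanying change of generators. This is the pro-$p$ analogue of the mapping class group of a surface with boundary permuting its boundary components, and I expect this to be the most delicate point.

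For existence, the construction is explicit. If $n+u\geq 1$, take $G$ to be the free pro-$p$ group on $2g+(n+u)-1$ generators equipped with Wilkes' standard presentation of the $n+u$ distinguished boundary subgroups $S_0,\dots,S_{n+u-1}$, and declare the first $n$ of them to form $\mathcal{N}$ and the remaining $u$ to form $\mathcal{U}$. If $n=u=0$ and $g\geq 1$, use the closed Wilkes presentation $\langle x_1,y_1,\dots,x_g,y_g\mid x_1^{p^r}[x_1,y_1]\cdots[x_g,y_g]\rangle$ (interpreting $r=\infty$ as the surface-group case in which the $x_1^{p^r}$ factor is omitted). The remaining admissible tuples $(0,\infty,n,u)$ with $n+u\leq 1$ correspond exactly to the degenerate triples of types~(2) and~(4) in Definition~\ref{Def:GrpTriple}, completing the existence part.
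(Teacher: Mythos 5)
Your proposal is correct and follows essentially the same route as the paper: all three claims (well-definedness, uniqueness, existence) are reduced to Wilkes' classification of pro-$p$ $\PD^2$ pairs, with the same explicit standard-form construction for existence. The one point you single out as delicate --- postcomposing a Wilkes isomorphism of pairs with an automorphism permuting the boundary family so that the in/out partition is respected --- is exactly the content the paper compresses into ``it is easy to see by Wilkes Theorem 3.3,'' and your Nielsen-transformation sketch (conjugation moves preserving the product $s_1\cdots s_b$, plus a base change to put $s_0$ on equal footing) is the right way to fill it in.
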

\begin{proof}
Two $PD^2$ triples of pro-p groups $(G;\mathcal{N},\mathcal{U})$, $(G';\mathcal{N}',\mathcal{U}')$ are isomorphic, if and only if there is an isomorphism of the groups $G$ to $G'$ sending isomorphically each ``in" boundary in $\mathcal{N}$ to (a conjugate of) a unique ``in" boundary in $\mathcal{N}'$ (inducing a bijection between the sets) and similarly for the ``out" boundaries. 
In other words, we get that $|\mathcal{N}|=n$ and $|\mathcal{U}|=u$ are invariants of an isomorphism class of a triple.\newline
Let $\mathcal{S}=\mathcal{N}\coprod \mathcal{U}$. By Wilkes' classification of pro-$p$ $\PD^2$ pairs (Theorem \ref{Thm:WilkesPDn}), we know that if $(G,\mathcal{S})$ is a $
\PD^2$ pair of pro-$p$ groups then for some positive integer $g$ we have \[\rk(G)=2g+|\mathcal{S}|-1.\]
We also get that there exists some $\chi$, the orientation character  of $(G,\mathcal{S})$,  and some integer $q(\chi)=p^r$, which is the maximal power of $p$ such that $\chi$ is the identity mod $p^r$. We call \[g:=\frac{1}{2}(\rk(G)-(|\mathcal{S}|-1))\] the genus of the pair, and we say that the pair is orientable up to level $r$ where $r$ is the power of $p$ in $q(\chi)$: $r=\log_p q(\chi)$.

It is easy to see by \cite{wilkes2020classification} Theorem 3.3, that any two cobordisms with the same invariant $(g,r,n,u)$ are isomorphic.
Suppose that we are given a tuple $(g,r,n,u)$ with $g,n,u\in\mathbb{N}_{\geq 0}$,  $r\in\mathbb{N}_{\geq 0}\cup\{\infty\}$ if $g\neq 0$, and $r=\infty$ when $g=0$. The cobordism whose invariants are this tuple is given by defining $G$ to be the free pro-$p$ group on the $2g+b$ generators $\{x_1,y_1,\dots,x_g,y_g,s_1,\dots,s_{b}\}$ where \[b:=n+u-1,\] and taking
\[\mathcal{N}=\{\langle s_0 \rangle,\dots,\langle s_{n-1} \rangle \}\] \[\mathcal{U}=\{\langle s_n \rangle,\dots,\langle s_{b} \rangle \},\]  where \[s_0=x_1^{p^r}[x_1,y_1]\cdots [x_g,y_g]s_1\cdots s_{b}.\]
\end{proof}

\subsection{Category of pro-p cobordisms}\label{subsect:Cob}
In order to be able to look at our TQFTs from a more functorial point of view, we now turn to describe a framework for cobordisms in terms of cospans of groups (we later show this is equivalent to a cospans of groupoid framework).
\begin{Rem}
We restrict ourselves here to the $(1+1)$ dimensional case. In order to have the categorical perspective for higher dimensions, one would require a better understanding of which gluings are proper (in the sense of remark \ref{Rem:amalgams}), e.g. to have a nice condition such as mod $p$ compatible, which assures all gluings with that property are proper.
\end{Rem}
\begin{Def}\label{Def:groupcobordism}
    We define a non-empty connected pro-$p$ $2$-cobordism to be a tuple \[(G,\mathcal{N},\mathcal{U}, \phi,\tau)\] such that:
    \begin{itemize}
   \item  $G$ is a pro-$p$ group 
   \item  $\mathcal{N}= \{N_i\}_{i\in I}$ and $\mathcal{U}=\{ U_j \}_{j\in J}$ are (possibly empty) collections of $\PD^{1}$ pro-$p$ groups and copies of the trivial group, with fixed generators $n_i$, $u_j$ mod $p$.
   \item $\phi$ and $\tau$ are two collections of group homomorphisms:
   \[ \xymatrix@C=.57em{
  & {G}  \\
 \{N_i\}_{i\in I} \ar[ur]^{\{\phi_i\}_{i\in I}}
 & &\{U_j\}_{j\in J} \ar[ul]_{\{\tau_j\}_{j\in J}}
 }\]
 such that the following two hold
 \begin{enumerate}
     \item The images in $G$ form a $PD^2$ triple, $(G,\{\phi_i(N_i)\}_{i\in I},\{\tau_j(U_j)\}_{j\in J}$
     \item Let $\{S_k\}_{k\in K}=\mathcal{S}=\{\phi_i(N_i)\}_{i\in I}\coprod\{\tau_j(U_j)\}_{j\in J}$. The $PD^2$ pair $(G,\mathcal{S})$ has a standard basis with $<s_k>=S_k$, such that mod $p$ the maps $\phi_i$, and $\tau_j$ send $n_i$, $u_j$ to the elements $s_k$.
 \end{enumerate}
\end{itemize}
\end{Def}

\begin{Def}
  An isomorphism between two connected $n$-cobordisms $(G,\mathcal{N},\mathcal{U}, \phi,\tau)$ and  $(G',\mathcal{N},\mathcal{U}, \phi',\tau')$
   is given by an  isomorphism of groups $\psi:G\rightarrow G'$ such that there are elements $g'_i,g'_j\in G'$ for which \[
   \begin{split}
    \psi(\phi_i)=g'_i(\phi'_i)(g'_i)^{-1}\\
       \psi(\tau_j)=g'_j(\tau'_j)(g'_j)^{-1}
   \end{split}
   \] 
   In other words the isomorphism $\psi:G\rightarrow G'$ fits into a commutative diagram of the form:\[ \xymatrix@C=.57em{
  & {G}\ar[dd]^\psi  \\
 \{N_i\}_{i\in I} \ar[ur]^{\{\phi_i\}_{i\in I}}\ar[dr]_{\{(\phi'_i)^{g'_i}\}_{i\in I}}
 & &\{U_j\}_{j\in J} \ar[ul]_{\{\phi_j\}_{j\in J}}\ar[dl]^{\{(\phi'_j)^{g'_j}\}_{j\in J}}\\
  & {G'}}\]
\end{Def}
\begin{Rem}
    From the point of view of cospans, we only allow an isomorphism between cobordisms having the same boundary groups. 
    If one instead wants to relate this to the notion of isomorphisms of group pairs, (i.e. an isomorphism between $(G,\mathcal{S})$, $(G',\mathcal{S}')$) we decompose this isomorphism into three cospans.
    Namely, the one having $\mathcal{S}$ as the boundary group, followed by a mapping cylinder between $\mathcal{S}$ to $\mathcal{S}'$ and then the one having $\mathcal{S}'$ as its boundary:
    \[\xymatrix@C=.57em{
  {G} & & {\mathcal{S}'}  && {G'}\\
  &{\mathcal{S}} \ar[ul]^{\id}\ar[ur]^{\psi}&&{\mathcal{S}'} \ar[ul]_{\id}\ar[ur]_{\id}\\
  }\] 
\end{Rem}
\begin{Def}
   We define a general $2$-cobordism to be a (finite, possibly empty) collection of $2$-cobordisms, $\{(G_k; \mathcal{N}_{k}, \mathcal{U}_{k},\phi_k,\tau_k)\}_{k \in K}$ and we define an isomorphism between non-connected $n$-cobordism to be a collection of isomorphisms of connected $n$-cobordisms: \[\psi_k:(G_k; \mathcal{N}_{k}, \mathcal{U}_{k})\rightarrow (G'_{\eta(k)}; \mathcal{N}'_{\eta(k)}, \mathcal{U}'_{\eta(k)}).\]
   which induces a bijection $\eta:K \to K'$.\\
 Given two collections of pro-$p$ $\PD^{1}$ groups $\mathcal{N}=\{N_i\}_{i\in I}$, $\mathcal{U}=\{U_j\}_{j\in J}$, a cobordism between them is a collection of connected cobordisms \[\{(G_k,\mathcal{N}_k,\mathcal{U}_k, \phi_k,\tau_k)\}_{k\in K}\] is an $n$-cobordism, such that $\mathcal{N}=\coprod_{k\in K}\mathcal{N}_k$, $\mathcal{U}=\coprod_{k\in K}\mathcal{U}_k$.
 In other words, some  partitions $I=\coprod_{k\in K} I_k$, $J=\coprod_{k\in K} J_k$ and connected cobordisms between from each $\mathcal{N}_k:=\{N_{i_k}\}_{i_k\in I_k}$ to $\mathcal{U}_k:=\{U_{i_k}\}_{i_k\in J_k}$.
   We call $\mathcal{N}$ the in boundary, $\mathcal{U}$ the out boundary, and say that the collections are cobordant to each other.  
\end{Def}

\begin{Rem}
    As explained in \cite{wilkes2019relative} Proposition 2.9, cohomology of a group $G$, relative to a family of subgroups $\mathcal{S}$, (and the notion of the pair having a relative Poincar\'{e} duality) is `invariant' under conjugation of the subgroups in $\mathcal{S}$ by elements of $G$. That is if one conjugates each $S_i\in\mathcal{S}$ by some element $g_i\in G$, the resulting relative cohomologies will be isomorphic. 
\end{Rem}

\begin{Exm}
    Any homomorphism $\psi:G \to G$ has a mapping cylinder cobordism associated to it: $(G,\{\{G\},\{G\}\},\psi,\id)$.
\end{Exm}

\begin{Rem}
    One can also consider a triple  \[(G,\mathcal{S},\zeta_\mathcal{S})\] where $G$ is a pro-$p$ group, $\mathcal{S}$ is a collection of pro-$p$ groups and $\zeta_{\mathcal{S}}$ a collection of homomorphisms from the groups in $\mathcal{S}$ to $G$ as non-directed cobordism, if there is a cobordism $(G,\mathcal{N}_I,\mathcal{U}_J, \phi_I,\tau_J)$ such $\mathcal{S}=\mathcal{N}_I\coprod\mathcal{U}_J$ and $\phi_I,\tau_J$ are the maps induced from $\zeta_{\mathcal{S}}$
\end{Rem}

\begin{Prop}\label{Prop:cobproperties}
Any connected cobordism has:
\begin{enumerate}
    \item $|I|, |J|< \infty$.
    \item Except in the case of $G=\{1\}$, all maps $\phi_i$ and $\tau_j$ are injective.
     \item none of the groups in $\mathcal{N}_I\coprod\mathcal{U}_J$ are trivial.
\end{enumerate}
Every connected cobordism is equivalent (in a non canonical way) to a $\PD^2$ pair $(G,\mathcal{S})$ together with a family of automorphisms $\psi$ of $\mathcal{S}$ and a partition of $\mathcal{S}$ into two disjoint families. 
\end{Prop}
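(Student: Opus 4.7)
The plan is to read off the three numbered properties directly from the $\PD^2$-triple condition built into Definition~\ref{Def:groupcobordism}, using Wilkes' classification (Theorem~\ref{Thm:WilkesPDn}) and the structure of continuous endomorphisms of $\mathbb{Z}_p$. The promised equivalence with a $\PD^2$ pair together with partition and automorphism data is then obtained by a direct unpacking of the definition.

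For (1), in each of the allowed $\PD^2$-triple cases (1), (2), (4) of Definition~\ref{Def:GrpTriple} for $n=2$, the boundary data is forced to be finite: case (1) inherits finiteness from Definition~\ref{Def:GroupPairs}, while in cases (2) and (4) each of $\mathcal{N}, \mathcal{U}$ has at most one element. For (3), suppose first we are in case (1). Wilkes' theorem presents each $S_k = \phi_i(N_i)$ (resp.\ $\tau_j(U_j)$) as the closed subgroup generated by an element $s_k$ of a standard generating set, hence $S_k \cong \mathbb{Z}_p$ is non-trivial; since $N_i$ is either $\mathbb{Z}_p$ or trivial by Definition~\ref{Def:groupcobordism}, the non-triviality of the image forces $N_i \cong \mathbb{Z}_p$, and similarly for $U_j$. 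In cases (2) and (4) the only boundary that can appear is $\mathbb{Z}_p$ itself, so no trivial group occurs there either.

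For (2), assume $G \neq \{1\}$, so we are in case (1). By (3), each $N_i$ and each image $S_k$ is isomorphic to $\mathbb{Z}_p$. The compatibility condition in Definition~\ref{Def:groupcobordism} gives $\phi_i(n_i) \equiv s_k \pmod{p}$, so viewed as a continuous homomorphism $\mathbb{Z}_p \to \mathbb{Z}_p$, the map $\phi_i$ is multiplication by a unit of $\mathbb{Z}_p$, hence an isomorphism; the same reasoning applies to each $\tau_j$.

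For the final equivalence, given a connected cobordism the pair is $(G, \mathcal{S})$ with $\mathcal{S} = \{\phi_i(N_i)\}_{i \in I} \coprod \{\tau_j(U_j)\}_{j \in J}$, and this disjoint decomposition is the desired partition. After choosing a standard-form basis with generators $s_k$ of each $S_k$, the injectivity from (2) writes $\phi_i(n_i) = s_k^{\alpha_i}$ and $\tau_j(u_j) = s_k^{\alpha_j}$ with $\alpha_\bullet \in \mathbb{U}_p = \ker(\Aut(\mathbb{Z}_p) \to \Aut(\mathbb{Z}/p))$; the collection of these scalars is the promised family of automorphisms of $\mathcal{S}$. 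Conversely, starting from a $\PD^2$ pair, a partition of $\mathcal{S}$, and automorphism data, one reconstructs the cobordism by these same formulae. Non-canonicity is located precisely in the choice of standard-form generators $s_k$: replacing $s_k$ by $s_k^\beta$ rescales the corresponding $\alpha_\bullet$ by $\beta^{-1}$. I do not anticipate any genuine obstacle, since the argument amounts to a dictionary between Definition~\ref{Def:groupcobordism} and Wilkes' classification; the only mildly subtle point is keeping straight which cases of Definition~\ref{Def:GrpTriple} have $G = \{1\}$, and that is dispatched by inspection.
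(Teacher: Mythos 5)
Your proposal is correct and follows the same overall skeleton as the paper's proof (dispose of the exceptional triples, show the boundary images are nontrivial, deduce injectivity, then unpack the definition for the final equivalence), but two of the key steps are justified by genuinely different inputs. For nontriviality and injectivity the paper argues cohomologically: it cites Wilkes' results that a $\PD^2$ pair has finitely many boundary components and that each boundary subgroup of a pro-$p$ $\PD^2$ pair is itself $\PD^1$ at $p$, and then invokes Neukirch--Schmidt--Wingberg Theorem 3.7.4 to conclude that the kernel of a surjection of $\PD^1$ pro-$p$ groups has $\cd_p=0$, hence is trivial. You instead lean on the explicit presentation in Theorem \ref{Thm:WilkesPDn} to see each image is a copy of $\mathbb{Z}_p$, and on the mod-$p$ normalization in Definition \ref{Def:groupcobordism} to see that $\phi_i$ is multiplication by a unit of $\mathbb{Z}_p$. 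Your route is more elementary and self-contained given the classification theorem (which the paper assumes anyway), whereas the paper's cohomological argument does not require choosing a standard basis and would generalize beyond the situation where boundary groups are procyclic. Two small points to tidy: your finiteness argument for (1) is essentially definitional (the paper prefers to cite Wilkes' Proposition 5.4, since Definition \ref{Def:groupcobordism} does not itself stipulate that $I$ and $J$ are finite), and the distinguished boundary subgroup $S_0=\langle s_0\rangle$ with $s_0=x_1^{p^r}[x_1,y_1]\cdots s_b$ is not generated by a member of the free generating set, so you should add the (easy) observation that a nontrivial procyclic closed subgroup of a free pro-$p$ group is still isomorphic to $\mathbb{Z}_p$.
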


\begin{proof}

    If the images in $G$ of $N_i,U_j$ form an exceptional $PD^2$ triple (i.e. cases (2) or (4) in Definition \ref{Def:GrpTriple}) the proposition is obvious.\\
    Otherwise, given a connected cobordism $(G,\mathcal{N}_I,\mathcal{U}_J, \phi_I,\tau_J)$ we by definition that have that $(G,\{\phi_i(N_i)\}\coprod\{\tau_j(U_j)\})$ is a $\PD^2$ pair at p.
    First by \cite{wilkes2019relative} Proposition 5.4 we get that $|I|,|J|< \infty$.
    Since all groups are pro-$p$, we have by \cite{wilkes2019relative} Proposition 1.32 and Proposition 5.9, that each $\phi_i(N_i)$ and $\tau_j(U_j)$ are $\PD^{1}$ at p.
    So we get that none of the $N_i,U_j$ are not the trivial group, and that $\phi_i$, $\tau_j$ are surjective maps between pro-$p$ $\PD^{1}$ groups.
    Thus by \cite{neukirch2013cohomology}  Theorem 3.7.4 we get that $\cd_p(\ker(\phi_i))=0$ but since these are pro-$p$ groups, we have that $\ker(\phi_i)=\{1\}$.

     For the final statement, since all the maps $\phi_i$ and $\tau_j$ are isomorphisms, if we fix some identification of each isomorphism classes between pro-$p$ $\PD^{1}$ groups, then the corresponding maps from $N_i, U_j$ to subgroups of $(G,\mathcal{S})$ will be equivalent to the cobordism with identity automorphisms, and then any other cobordism between them will be given by families of automorphisms from each subgroup of $\mathcal{S}$ to itself.
\end{proof}
 \begin{Def}\label{Def:CobCat}
    Let $\Cob_p^2$ be the category whose objects are finite collections of the trivial group and pro-$p$ $\PD^{1}$ groups.
    Morphisms between objects are equivalence classes of cobordisms between the collections of groups, and compositions of morphisms is defined by gluings as in Proposition \ref{Prop:Gluing}.
\end{Def}
\begin{Rem}\label{Rem:gluinginstages}
One can instead think of the gluing in Proposition \ref{Prop:Gluing} as push-outs in the category of pro-p groups. Thinking of it like this gives us that the above category $\Cob_p^2$ is equivalent to a full subcategory of the category $\Cospan(\pGrpd)$ of cospans of pro-$p$ groupoids, which is further explained in Appendix \ref{App:Cospan}.
\end{Rem}

\begin{Prop}
    The TQFTs in definition \ref{Def:TQFT-TurnerTaraev} are equivalent to symmetric monoidal functors from $\Cob_p^2$ to $\Proj(R)$. More precisely, the natural functor 
    \[\Fun^{\otimes}(\Cob_p^2, \Proj(R)) \to \TQFT^{2}_{p}(R)\] is an equivalence of groupoids.
\end{Prop}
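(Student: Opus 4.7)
The plan is to construct an explicit inverse to the stated functor and then check it is well defined on both objects (functors) and morphisms (natural equivalences). Given a symmetric monoidal functor $F:\Cob_p^2\to\Proj(R)$, I would define the TQFT data $(A,\zeta)$ by setting $A_{\mathcal{M}}:=F(\mathcal{M})$ on objects, $\zeta((G_k)_{k\in K}):=F(\{(G_k;\mathcal{N}_k,\mathcal{U}_k,\phi_k,\tau_k)\})$ on cobordisms, and $f_{\#}:=F(C_f)$ where $C_f$ is the mapping cylinder cobordism associated to the isomorphism $f:M\to M'$, viewed as a morphism $\{M\}\to\{M'\}$ in $\Cob_p^2$. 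Conversely, given a TQFT, one uses T1 and T2 to define the functor on objects (with the monoidal structure determined by T2 and the compatibility conditions of Remark \ref{Rem:TensorCompt}), the assignment $\zeta$ on cobordism morphisms, and the maps $f_{\#}$ together with $\zeta$ to handle morphisms that combine gluings with boundary reidentifications, as made possible by Remark \ref{Rem:gluinginstages}.

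The next step is to match the axioms. Axiom T1 encodes unitality $F(\emptyset)=R$; T2 together with the three diagrams of Remark \ref{Rem:TensorCompt} is exactly the data of a strong symmetric monoidal structure (associator, braiding, and naturality of the monoidal coherences); T3 is functoriality restricted to the groupoid of isomorphisms of $\PD^1$ groups, which embed in $\Cob_p^2$ as mapping cylinders with composition corresponding to their concatenation; T4 is naturality of $\zeta$ with respect to isomorphism cylinders, which is precisely a consequence of functoriality of $F$ applied to the commuting square built from mapping cylinders on the boundaries; T5 follows from $F$ being monoidal combined with the definition of general (non-connected) cobordisms as disjoint unions of connected ones; T6 is $F(\id)=\id$ on the self-cobordism $(M;\{M\},\{M\},\id,\id)$; and T7 is functoriality of $F$ applied to the composition obtained by gluing, using Proposition \ref{Prop:Gluing} and the expression of gluing-with-reidentification as composition of a cobordism, a boundary isomorphism cylinder, and another cobordism.

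For morphisms, given a monoidal natural equivalence $\eta:F\Rightarrow F'$, the components $\eta_{\mathcal{M}}$ give the isomorphisms $A_{\mathcal{M}}\to A'_{\mathcal{M}}$ required to be a TQFT isomorphism, with conditions (1)--(4) of Definition \ref{Def:TQFT-TurnerTaraev} translating into monoidality of $\eta$ on the unit, compatibility with the tensor, naturality at mapping cylinders, and naturality at general cobordism morphisms, respectively. The inverse assignment is straightforward: a TQFT isomorphism provides components at every object of $\Cob_p^2$, and the four axioms force naturality on all morphisms since, by Proposition \ref{Prop:cobproperties} and the generation results of Section \ref{sect:Structure} (invoked only heuristically here), every morphism of $\Cob_p^2$ is built from cobordisms and mapping cylinders.

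The main obstacle is purely bookkeeping: one must verify that composition in $\Cob_p^2$, defined via the amalgamated pushouts of Subsection \ref{subsect:gluing}, really coincides with the gluing operation of Proposition \ref{Prop:Gluing} that underlies T7, and that the zoo of exceptional triples (cases (2)--(4) of Definition \ref{Def:GrpTriple}) behaves consistently under this identification, since some of those triples are not honest $\PD^2$ pairs. A further subtlety, already flagged in the paper, is the equivalence of the cospan-of-groups description with the cospan-of-groupoids description used in Appendix \ref{App:Cospan}; I would delegate these coherence checks to that appendix and simply appeal to the well-definedness of $\Cob_p^2$ established via Remark \ref{Rem:gluinginstages}. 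Once these identifications are made, the unit and counit of the adjunction between the two constructions are manifestly identities at the level of underlying data, yielding the desired equivalence of groupoids.
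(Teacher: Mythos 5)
Your proposal is correct and follows essentially the same route as the paper: the paper's own proof is a brief sketch that delegates the axiom-matching to Appendix \ref{App:Cospan} and Remark \ref{Rem:gluinginstages} and uses Proposition \ref{Prop:cobproperties} to normalize cobordisms, which is exactly the structure you spell out in more detail. Your explicit correspondence of T1--T7 with the symmetric monoidal functor data matches what the appendix establishes (e.g.\ its identities for composition of cospans giving T3, T4 and T7).
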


\begin{proof}
Given a symmetric monoidal functor from $\Cob_p^2$ to $R$-modules, by the discussion in Appendix \ref{App:Cospan} and Remark \ref{Rem:gluinginstages}, we get a TQFT.
On the other hand, by Proposition \ref{Prop:cobproperties} we can think of any cobordism as one where the boundaries are subgroups of $G$ together with a family of automorphisms of the boundaries.
    Now by Remark \ref{Rem:gluinginstages} we get that any TQFT will indeed give us a symmetric monoidal functor.
\end{proof}
\begin{Rem}
    A reformulation a more functorial formalism of the TQFTs of \cite{turaev2006unoriented} in the setting of manifolds can be found in \cite{sweet2013equivariant}. 
\end{Rem}

\subsection{Extended Frobenius algebras}\label{subsect:Frob}
We would now like to have a classification for $(1+1)$ pro-p TQFTs, in terms of some extended notion of Frobenius algebras, which we call $\mathbb{U}_{p}$-extended Frobenius algebras.
In this article we only consider commutative and co-commutative Frobenius algebras. 
Recall the following definition of a Frobenius algebra:
\begin{Def}\label{Def:Frob}
    A Frobenius algebra over a commutative ring $R$, is a projective $R$-module $V$, together with module homomorphisms
    $\iota:R\rightarrow V$, $\epsilon:V\rightarrow R$, $m:V\otimes_{R} V\rightarrow V$ and $\Delta:V\rightarrow V\otimes_{R} V$, called the unit, co-unit, multiplication and co-multiplication respectively, and which satisfy the following:\begin{enumerate}
    \item[F1.]  \[m \circ (\id_V\otimes \iota)=\id_V=m \circ (\iota \otimes \id_V)\]
    \item[F2.] \[(\epsilon\otimes \id_V) \circ \Delta=\id_V=(\id_V \otimes\epsilon) \circ \Delta\]
    \item[F3.]  \[m \circ (m\otimes \id_V)=m \circ (\id_V\otimes m)\]
    \item[F4.]    \[(\Delta\otimes \id_V) \circ \Delta=(\id_V\otimes\Delta) \circ \Delta\]
    \item[F5.]   \[(m \otimes \id_V) \circ (\id_V\otimes \Delta)= \Delta \circ m=(\id_V\otimes m) \circ (\Delta \otimes \id_V)\]
    \item[FS.] 
  \[ m \circ \sigma = m\]
  \[\sigma \circ \Delta = \Delta .\]
       \end{enumerate}
       Where \[\sigma:V \otimes_{R} V \to V \otimes_{R}V\] is the morphism determined by $v\otimes w \mapsto w\otimes v$.
\end{Def}
In the spirit of \cite{turaev2006unoriented}, but with the extra orientation levels considered we define the following:
\begin{Def}\label{Def:ExFrAlg}
    A $\mathbb{U}_{p}-$extended Frobenius algebra over a ring $R$ is a Frobenius algebra $(V, \iota, m, \epsilon, \Delta)$ over a ring $R$ together with a family of automorphisms $\phi_\alpha:V\rightarrow V$ for all $\alpha \in \mathbb{U}_{p}$ which are compatible with respect to multiplication in $\mathbb{U}_{p}$ (so a homomorphism $\rho: \mathbb{U}_{p} \to \Aut_{R}(V)$) 
    such that the following axioms are met:
    \begin{enumerate}
    \item[F6.] 
    \[\phi_\alpha \circ \iota = \iota \]
    \item[F7.] 
    \[
    \epsilon \circ \phi_\alpha = \epsilon
    \]
    \item[F8.] 
    \[
    \Delta \circ \phi_\alpha = (\phi_{\alpha} \otimes \phi_{\alpha}) \circ \Delta
    \]
    \item[F9.] 
    \[
    \phi_{\alpha} \circ m = m \circ (\phi_\alpha \otimes \phi_\alpha)
    \]
    
    \item[F10.] 
        For any $r\in\mathbb{Z}_{> 0}$, the maps from $R \to V$ 
        \[m \circ (\phi_\alpha \otimes \id_V) \circ \Delta \circ \iota\] agree for any $\alpha$ of level $r$.
        In other words, for any unit $\alpha$ with $\alpha\in1+p^r\mathbb{Z}_p$, but $\alpha\not\in1+p^{r+1}\mathbb{Z}_p$.
        We denote this map by $\kappa_r$.
    \item[F11.] As maps $R\to V$ we have 
      \[m \circ (\kappa_r \otimes \kappa_{r'})= m \circ \Delta \circ \kappa_{\min(r,r')}.\]

    \item[F12.] For $\alpha\equiv 1\pmod{p^{r}}$ we have \[\phi_\alpha \circ m\circ (\kappa_r\otimes \id_V) =m\circ (\kappa_r\otimes \id_V) .\]
    \end{enumerate}
    Let $V$, $V'$ be Frobenius algebras over $R$, with $\mathbb{U}_{p}$ structures: $\rho:\mathbb{U}_{p}\rightarrow \Aut_{R}(V)$ and $\rho':\mathbb{U}_{p}\rightarrow \Aut_{R}(V')$.
    An isomorphism of  extended Frobenius algebras is an isomorphism of Frobenius algebras $\psi:V\rightarrow V'$ such that \[\psi(\rho(\mu)(v))=\rho'(\mu)(\psi(v))\] for all $\mu\in\mathbb{U}_{p},v\in V$.
The groupoid of $\mathbb{U}_{p}$-extended Frobenius algebras will be denoted by $\Frob_{p}(R)$.
\end{Def}
\begin{Exm} The following example is based on Khovanov's universal (commutative co-commutative) rank $2$ aspherical Frobenius algebra \cite{MR1740682, MR2232858} and its extension to a universal $U(\mathbb{Z})$-extended (commutative co-commutative) rank $2$ aspherical Frobenius algebra by Turner and Turaev \cite{turaev2006unoriented}. We give a two-parameter family here, not doing the most general case.
For a unit $\alpha\in\mathbb{U}_p$, we call the level of $\alpha$, the largest integer $r$, such that $\alpha\in1+p^r\mathbb{Z}_p$
Let \[R=\mathbb{Z}[h,t] \mathbb{U}_{p}/I\] where 
    $I$ is the ideal of the group ring $\mathbb{Z}[h,t] \mathbb{U}_{p}$ generated by the sets \[
    \{[\alpha][\alpha']-h([\alpha]+[\alpha']-[\min \{\alpha, \alpha'\}]) \ | \ \alpha, \alpha' \in \mathbb{U}_{p}\} \]
    \[\{[\alpha_1]-[\alpha_2] \ | \ \alpha_1, \alpha_2 \in \mathbb{U}_{p} \  \text{have the same level r}\} \]\[ \{ 2[\alpha] \ | \ \alpha \in \mathbb{U}_{p} \}.
    \]
     Here $\min \{\alpha, \alpha'\}$ is some element of $\mathbb{U}_{p}$ whose $r$ level is the minimum of the $r$-levels of $\alpha$ and $\alpha'$. Notice that by taking $\alpha = \alpha'$ we see that $[\alpha]^2-h[\alpha]=0$ in $R$ for any $\alpha \in \mathbb{U}_{p}$. Let $V=R\oplus Rx$. 
    Define $\iota(1)=1,$ and define
    \[
    m(A+Bx, C+Dx)=AC+BDt+(AD+BC+BDh)x
    \]

    \[
    \Delta(1)=1\otimes x+ x\otimes 1-h 1\otimes 1
    \]
    \[
    \Delta(x)=x\otimes x+t1\otimes 1
    \]
    \[
    \epsilon(A+Bx)=B
    \]
    \[
    \phi_{\alpha}(A+Bx)=A+B[\alpha]+Bx.
    \]
    Notice $m(\Delta(x))= x^2+t= 2t+hx$ and $m(\Delta(1))=2x-h$. Then we have 
   \begin{equation*}
   \begin{split}
    & m(\phi_{\alpha}(A+Bx), \phi_{\alpha}(C+Dx)) \\
    & =(A+B[\alpha])(C+D[\alpha])+BDt+((A+B[\alpha])D+B(C+D[\alpha])+BDh)x  \\  
    \end{split}
   \end{equation*} 
whereas 
\begin{equation*}
   \begin{split}
    & \phi_{\alpha}(m(A+Bx, C+Dx)) \\
    &=AC+BDt +(AD+BC+BDh)[\alpha]+(AD+BC+BDh)x.
     \end{split}
   \end{equation*}
Therefore 
\begin{equation*}
\begin{split}
& m(\phi_{\alpha}(A+Bx), \phi_{\alpha}(C+Dx))-\phi_{\alpha}(m(A+Bx, C+Dx)) \\ &=BD([\alpha]^2+2[\alpha]x-h[\alpha])
\end{split}
\end{equation*}
and this vanishes using the relations in the ideal.
Now notice that 
\[
(\phi_\alpha \otimes \id) (\Delta(1)) = 1\otimes x-h1\otimes 1+([\alpha]+x)\otimes 1
\]
and so
\[
m(\phi_\alpha \otimes \id (\Delta(1)))=2x-h+[\alpha]
\]
depends only on the $r$-level of $\alpha$ using the second type of relation in the ideal. So let \[\kappa_r = 2x-h+[\alpha]\] where $\alpha$ has is some (any) unit of level $r$. Then notice \begin{equation*}
\begin{split}
\kappa_r \kappa_{r'} & = 4(t+hx)+[\alpha][\alpha']+h^2+(2x-h)([\alpha]+[\alpha'])-4xh \\
& = 4t+[\alpha][\alpha']+h^2 -h([\alpha]+[\alpha'])
\end{split}
\end{equation*}
We have 
\begin{equation*}
\begin{split}  
m \circ \Delta \circ \kappa_{\min(r,r')} & = m(2\Delta(x)+(-h+[\min \{ \alpha, \alpha'\}])\Delta(1)) \\
& = 2hx+4t+(-h+[\min \{ \alpha, \alpha'\}])(2x-h) \\
& = 4t+h^2 -h[ \min \{ \alpha, \alpha'\}]
.
\end{split}
\end{equation*}
Therefore
\begin{equation*}
\begin{split}
\kappa_r \kappa_{r'}-m \circ \Delta \circ \kappa_{\min(r,r')} &= [\alpha][\alpha'] +h([\min \{ \alpha, \alpha'\} ]-[\alpha]-[\alpha']) \\
&= 0
\end{split}
\end{equation*}

Finally, for $\alpha\equiv 1\pmod{p^{r}}$
\[
\phi_\alpha(\kappa_r)=[\alpha]-h+2x+2[\alpha] = \kappa_r
\]
and
\[
\phi_\alpha(\kappa_r x)=\phi_\alpha (([\alpha] -h)x+2t)=2t+([\alpha] -h)x = \kappa_r x.
\]
This concludes the demonstration that $V$ is a $\mathbb{U}_p$-extended Frobenius algebra. It is easy to see that with respect to the ordered basis $1,x$, $m\circ \Delta$ is given by the matrix 
\begin{equation*}
m\circ \Delta=\begin{pmatrix}
-h & 2t \\
2 & h 
\end{pmatrix}
\end{equation*}
and so $(m\circ \Delta)^2=\begin{pmatrix}
h^2+4t & 0 \\
0 & h^2+4t 
\end{pmatrix}$ and so $(m\circ \Delta)^3=(h^2+4t)\begin{pmatrix}
-h & 2t \\
2 & h 
\end{pmatrix}$ which means that the genus $3$ invariant is given by $\epsilon\circ (m\circ \Delta)^3 \circ \iota=2h^2+8t$. Meanwhile, considering the involutions $\phi_\alpha=\begin{pmatrix}
1 & [\alpha] \\
0 & 1 
\end{pmatrix}$ we can see that the extended structure and orientability level does not affect the surface invariants here. Nevertheless, these examples are not equivalent to the example given by the same underlying Frobenius algebra with all $\phi_\alpha = \id$. A more general family of $\mathbb{U}_p$-extended Frobenius algebra can be achieved by introducing two new scaling parameters for $\Delta$ and $\epsilon$.
\end{Exm}
Our main theorem which will be proven later in the article will be:
\begin{Thm} \label{Thm:Main}
    Isomorphism classes of $(1+1)-R-$TQFTs at $p$ which are mod $p$ orientation compatible are in a bijective correspondence with isomorphism classes of $\mathbb{U}_{p}$-extended $R$-Frobenius algebras. More precisely, there is an equivalence of groupoids $\TQFT^{2}_{p}(R) \cong \Frob_{p}(R)$.
\end{Thm}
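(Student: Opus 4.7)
The plan is to construct the equivalence by passing through the generators-and-relations presentation of $\Cob_p^2$ developed in Section \ref{sect:Structure}, in a manner analogous to the classical Atiyah-Segal classification of $2$d TQFTs, but keeping careful track of the $\mathbb{U}_p$-action coming from the enlarged automorphism group of $\mathbb{Z}_p$.

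First I would construct the forward functor $\TQFT^{2}_{p}(R)\to\Frob_{p}(R)$. Given $F\in\Fun^{\otimes}(\Cob_p^2,\Proj(R))$, set $V:=F(\mathbb{Z}_p)$ and assign to $V$ the operations produced by applying $F$ to the distinguished cobordisms identified in Subsection \ref{subsect:Gens}: the pair of pants yields $m$, its reverse yields $\Delta$, the disks (the exceptional triples of type (4) in Definition \ref{Def:GrpTriple}) yield $\iota$ and $\epsilon$, each $\alpha\in\mathbb{U}_p$ acts on $\mathbb{Z}_p$ and produces a mapping cylinder whose image under $F$ gives $\phi_\alpha$, and the $\PD^2$ pair $\langle x,y\mid x^{p^r}[x,y]\rangle$ seen as the triple $(\{1\};\emptyset,\{\mathbb{Z}_p\})$ but with nontrivial orientability level $r$ produces $\kappa_r$. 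Axioms F1--F5 and FS follow from the usual 2d cobordism identities, which remain valid in the pro-$p$ setting (genus-$0$ surfaces with up to three punctures lift to proper splittings by Theorem \ref{Thm:GropperSplit}). Axioms F6--F12 are then translated one-by-one into statements about gluings of the generators, with F10 and F11 reflecting exactly the identification of the $\kappa_r$ with the level-$r$ torus bordisms and F12 being the statement that twisting by $\alpha\equiv 1\pmod{p^r}$ acts trivially after multiplying by $\kappa_r$.

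For the inverse functor $\Frob_{p}(R)\to\TQFT^{2}_{p}(R)$, the strategy is to use the result of Subsection \ref{subsect:Rels} that $\Cob_p^2$ admits a presentation whose generators are precisely the pair of pants, its reverse, cap, cup, the cylinders labeled by $\alpha\in\mathbb{U}_p$, and the level-$r$ punctured tori, and whose relations are exactly F1--F12 (plus the symmetric monoidal coherence already encoded in Remark \ref{Rem:TensorCompt}). Given a $\mathbb{U}_p$-extended Frobenius algebra $(V,m,\Delta,\iota,\epsilon,\rho,\{\kappa_r\})$, one defines a symmetric monoidal functor on objects by $\{\mathbb{Z}_p\}^{\sqcup n}\mapsto V^{\otimes n}$ and on generators by the prescribed operations; the relations F1--F12 guarantee this assignment is well-defined on morphisms, and the naturality/isomorphism axioms of Definition \ref{Def:TQFT-TurnerTaraev} are then automatic. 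Finally, one checks that the two constructions are inverse up to natural isomorphism of groupoids, which amounts to the observation that they agree on the generating cobordisms and that any natural transformation between two such functors is determined by its component at $\mathbb{Z}_p$.

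The main obstacle, and the step that absorbs most of the work, is the presentation of $\Cob_p^2$ itself. The generation statement requires taking an arbitrary connected cobordism $(G,\mathcal{N},\mathcal{U},\phi,\tau)$, invoking the pants decomposition of pro-$p$ $\PD^2$ pairs from Theorem \ref{Thm:GropperSplit}, and writing it as a composite of pairs of pants, caps, cups, and twists; the crucial point is that a cobordism of genus $g$ with orientability level $r$ cannot be decomposed purely into genus-$0$ pieces, forcing the introduction of the $\kappa_r$-generators and giving geometric meaning to the level-filtration of $\mathbb{U}_p$. The completeness of the relations is more delicate: one must show that any two pants decompositions of the same $\PD^2$ pair differ by a sequence of moves each of which corresponds to one of F1--F12, using the mod-$p$ orientation compatibility hypothesis (which ensures all splittings encountered are proper, per the final clause of Theorem \ref{Thm:GropperSplit}) to rule out pathological gluings. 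Axiom F12 in particular encodes the nontrivial identity that a $\phi_\alpha$-twisted cylinder glued onto the boundary of a level-$r$ punctured torus is absorbed when $\alpha\equiv 1\pmod{p^r}$, which is the pro-$p$ analogue of the classical statement that orientation-reversal is trivial on a non-orientable surface.
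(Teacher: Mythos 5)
Your proposal follows essentially the same route as the paper: extract $V=F(\mathbb{Z}_p)$ and read off $m,\Delta,\iota,\epsilon,\phi_\alpha$ from the generating cobordisms, then invert by using the presentation of $\Cob_p^2$ whose relations match the axioms (F1)--(F12) plus (FS), with the real work residing in the generation and completeness-of-relations theorems. Two points should be corrected, however. First, in the paper's Definition \ref{Def:ExFrAlg} the maps $\kappa_r$ are \emph{not} additional data of a $\mathbb{U}_p$-extended Frobenius algebra: axiom (F10) \emph{defines} $\kappa_r$ as $m\circ(\phi_\alpha\otimes\id_V)\circ\Delta\circ\iota$ for any $\alpha$ of level $r$, so writing the tuple as $(V,m,\Delta,\iota,\epsilon,\rho,\{\kappa_r\})$ and positing a separate "$\kappa_r$-generator" misplaces where the content lies; relatedly, your identification of the level-$r$ once-punctured torus with the triple $(\{1\};\emptyset,\{\mathbb{Z}_p\})$ is a slip, since that triple is the cap $C_{0,1}$. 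Second, your claim that a genus-$g$, level-$r$ cobordism "cannot be decomposed purely into genus-$0$ pieces" is the opposite of the paper's key Proposition \ref{Prop:KleinRelation}(2) (relation (R10)): $T^r_{1,1}$ \emph{is} obtained by gluing $P_{1,2}$ to $P_{2,1}$ along twists $C^{\alpha}_{1,1}\coprod C^{\beta}_{1,1}$ with $\alpha\beta^{-1}$ of level $r$, so the tori are redundant generators and the orientability level is carried entirely by the twisted gluings --- this is precisely what makes the correspondence with axiom (F10) work. Neither issue breaks the overall architecture of your argument, but both would need to be fixed for the relation-matching step to go through as in the paper.
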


We finish this section with some consequences of the axioms of $\mathbb{U}_p$-extended Frobenius algebras, these identities will be useful in the next section. 

First note that for any $f \in V$,
\begin{equation}\label{eqn:FrobCons1}
\Delta(f)=\Delta(m(1_m,f))=(\id \otimes m)(\Delta(1_m) \otimes f)
\end{equation}
so $\Delta$ is determined by $\Delta(1_m)$.
When we will want to study values of TQFTs on $T_{1,1}$ we need to study $m\circ\Delta$.
By the above equation we have that for any $f \in V$: 
\begin{equation}\label{eqn:FrobCons2}
m(\Delta(f))=m((\id \otimes m)(\Delta(1_m) \otimes f))= m( m(\Delta(1_m)) \otimes f).
\end{equation}
Finally, we have:
\begin{Prop}
    Let $r$ is the highest power of $p$ such that $\alpha \beta^{-1}$ is in $1+p^r\mathbb{Z}_p$. Then for a fixed such $r$, the maps $R \to V$  
        \[m \circ (\phi_\alpha \otimes \phi_\beta) \circ \Delta \circ \iota\]
        agree and are equal to $\kappa_r$.
\end{Prop}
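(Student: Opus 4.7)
The plan is to reduce the two-sided expression $m \circ (\phi_\alpha \otimes \phi_\beta) \circ \Delta \circ \iota$ to the one-sided expression appearing in axiom F10, using the homomorphism property of $\rho$ together with the compatibility axioms F6, F8 and the commutativity/cocommutativity axiom FS.

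The first step is purely algebraic: since $\rho: \mathbb{U}_p \to \Aut_R(V)$ is a homomorphism, we may factor
\[
\phi_\alpha \otimes \phi_\beta \;=\; (\id \otimes \phi_{\beta\alpha^{-1}}) \circ (\phi_\alpha \otimes \phi_\alpha).
\]
I would then push the inner $\phi_\alpha \otimes \phi_\alpha$ across the comultiplication using F8, obtaining
\[
(\phi_\alpha \otimes \phi_\beta) \circ \Delta \;=\; (\id \otimes \phi_{\beta\alpha^{-1}}) \circ \Delta \circ \phi_\alpha,
\]
and precompose with $\iota$. Axiom F6 then removes the surviving $\phi_\alpha$, so the whole expression becomes
\[
m \circ (\id \otimes \phi_{\beta\alpha^{-1}}) \circ \Delta \circ \iota.
\]

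The next step is to move the surviving $\phi_{\beta\alpha^{-1}}$ from the right tensor factor to the left, so that F10 can be directly invoked. This is where cocommutativity and commutativity enter: writing $\gamma = \beta\alpha^{-1}$ and using $m \circ \sigma = m$ together with $\sigma \circ \Delta = \Delta$ from axiom FS,
\[
m \circ (\id \otimes \phi_\gamma) \circ \Delta \;=\; m \circ \sigma \circ (\id \otimes \phi_\gamma) \circ \Delta \;=\; m \circ (\phi_\gamma \otimes \id) \circ \sigma \circ \Delta \;=\; m \circ (\phi_\gamma \otimes \id) \circ \Delta.
\]
Since inversion in $\mathbb{U}_p$ preserves the $p$-adic level, $\beta\alpha^{-1}$ has exactly the same level $r$ as $\alpha\beta^{-1}$, so axiom F10 identifies the resulting map with $\kappa_r$.

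I do not expect any real obstacle here; every step is a direct application of a single axiom, and the proof is essentially a bookkeeping exercise once the right factorization of $\phi_\alpha \otimes \phi_\beta$ is chosen. The only subtle point is noticing that the ``symmetric'' factorization $\phi_\alpha \otimes \phi_\beta = (\phi_\alpha \otimes \phi_\alpha)\circ(\id \otimes \phi_{\alpha^{-1}\beta})$ leads instead to the expression $\phi_\alpha(\kappa_r)$, which would require F12 and hence only succeed when $\alpha$ has level at least $r$; the factorization that works globally is the one above, where the $\phi_\alpha \otimes \phi_\alpha$ piece sits on the \emph{inside} so that F8 can absorb it into $\phi_\alpha$ and F6 then annihilates it against the unit. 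As a free byproduct, combining the two factorizations gives the identity $\phi_\alpha(\kappa_r) = \kappa_r$ for every $\alpha \in \mathbb{U}_p$, strengthening axiom F12.
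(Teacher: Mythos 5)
Your proof is correct and is essentially the paper's argument: the paper factors out $\phi_\beta \otimes \phi_\beta$ rather than $\phi_\alpha \otimes \phi_\alpha$, which lands the residual twist $\phi_{\alpha\beta^{-1}}$ directly in the left tensor factor and so avoids your extra appeal to (FS) and to the observation that inversion preserves the level. Both versions are short applications of (F8), (F6), and (F10), and your remark that the computation also yields $\phi_\alpha(\kappa_r)=\kappa_r$ for all $\alpha\in\mathbb{U}_p$ is a valid bonus.
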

\begin{proof}
    By axiom (F8) we have  \[m \circ (\phi_\alpha \otimes \phi_\beta) \circ \Delta \circ \iota= m \circ (\phi_{\alpha \beta^{-1}} \otimes \phi_{\beta \beta^{-1}}) \circ \Delta \circ\phi_\beta \circ \iota\]
    but then by axiom (F6) we get that this is
\[ m \circ (\phi_{\alpha \beta^{-1}} \otimes \text{id}_V) \circ \Delta \circ \iota\]
which is just $\kappa_r$.
\end{proof}
\newpage
\section{Structure of \texorpdfstring{$Cob_p^2$}{Cob\_p\^2}}\label{sect:Structure}
 To set the stage to prove Theorem \ref{Thm:Main}, we start by first better understanding the objects of the two dimensional cobordism category $Cob_p^2$.
 \subsection{Skeleton and symmetric monoidal structure}
 Objects in $\Cob_p^2$ are finite labeled collections of pro-$p$ $\PD^1$ groups and pro-$p$ $\PD^1$ groups are all isomorphic to $\mathbb{Z}_p$.
 So if for each non negative integer $n$ we fix a canonical ordered collection $\{(\mathbb{Z}_p)_i\}_{i \in \{1,..,n\}}$, we have that any object is isomorphic to a collection $\{(\mathbb{Z}_p)_i\}_{i \in \{1,..,n\}}$  for some nonnegative integer $n$.
 The isomorphism is not unique, but for any isomorphisms $a,b$ from $\mathcal{U}_{k\in K}$ to $\{(\mathbb{Z}_p)_i\}_{i \in \{1,..,n\}}$ there is an automorphism $c$ of $\{(\mathbb{Z}_p)_i\}_{i \in \{1,..,n\}}$ such that $c\circ a=b$.
From this, we get that we can instead consider a skeleton of our category, of objects of the form $\{(\mathbb{Z}_p)_i\}_{i \in\{1,..,n\}}$, where the monoidal structure will be by the identification from $\{(\mathbb{Z}_p)_i\}_{i \in\{1,..,n\}}\coprod\{(\mathbb{Z}_p)_j\}_{j \in\{1,..,m\}}$ to $\{(\mathbb{Z}_p)_k\}_{k \in\{1,..,n+m\}}$ sending by the identity automorphism of $\mathbb{Z}_p$ to the first $n$ components (by their order) the copies labeled by $i$, and then from the $n+1$ to the $n+m$ (by their order) the copies labeled by $j$.
The symmetric structure is given by swapping two labeled objects, i.e. the map taking $\mathcal{M}\coprod\mathcal{N}$ to $\mathcal{N}\coprod\mathcal{M}$ which allows us to switch between the copies of $\mathbb{Z}_p$.
We would now like to understand the generators and relations of the monoidal category $Cob^2_p$ (in the sense of section 1.4 of \cite{kock2004frobenius}).

\subsection{Generators}\label{subsect:Gens}

We claim that any cobordism in $Cob^2_p$ can be constructed from certain group triples.\\
These correspond to ``pairs of pants", ``cups", ``caps", ``cylinders" and ``a twice punctured torus, orientable up to level $r$" for each $r$.
By these we mean:
\begin{enumerate}
    \item[G1.]
A pair of pants is the cobordism \[P_{2,1}:=(\langle a,b \rangle; \{\langle a \rangle,\langle b \rangle \},\{\langle ab \rangle\})\] 
or
\[P_{1,2}:=(\langle a,b \rangle; \{\langle ab \rangle\},\{\langle a \rangle,\langle b \rangle\}).\] In other words, the group is the free pro-$p$ group on two generators, and we have either two in and one out boundary components, or one in and two out boundary components.
\begin{center} 
\begin{tikzpicture}
\draw (0,0) node{\PantsL};
\draw (0.6,-1.6) node {$P_{2,1}$};
\draw (4,0) node{\PantsR};
\draw (3.5,-1.6) node {$P_{1,2}$};
\end{tikzpicture} 
\end{center}

\item[G2.] A cup is \[C_{1,0}:=(\{1\}; \{\mathbb{Z}_p\},\emptyset)\] \newline
\item[G3.] A cap is \[C_{0,1}:=(\{1\}; \emptyset, \{\mathbb{Z}_p\})\]
\begin{center} 
\begin{tikzpicture}

\draw (0,0.75) node{\capB};
\draw (0.2,-0.1) node {$C_{1,0}$};
\draw (2,0.75) node{\cupB};
\draw (2,-0.1) node {$C_{0,1}$};

\end{tikzpicture} 
\end{center}

\item[G4.] For all $r\in\mathbb{N}_{>0}\cup\infty$, a twice punctured torus, orientable up to level $r$,  \[T^r_{1,1}:=(\langle x,y,s_1 \rangle ; \{\langle s_1 \rangle\},\{\langle s_0 \rangle\})\]  where \[s_0=s_1x^{p^r}[x,y].\] This is equivalent to having a basis where $s_0$ is in the generators and $s_1$ satisfies a similar relation to that above.
\begin{center} 
\begin{tikzpicture} 

\draw (0,0) node{\Tr};
\node at (0,-1.7) {$ T^r_{1,1}$};

\end{tikzpicture}
\end{center}

\item[G5.] By cylinders we mean \[C_{1,1}:=(\langle c \rangle;\{\langle c \rangle\} , \{\langle c \rangle\})\]
\begin{center} 
\begin{tikzpicture}
\draw (0,0) node{\cyl};
\draw (0,-1) node {$C_{1,1}$};
\end{tikzpicture} 
\end{center}

To also understand cobordisms which don't come from groups and their subgroups we also have the following two generators:
\item[G6.]
Twisting a boundary by $\alpha$:
\begin{center} 
\begin{tikzpicture}
\draw (0,0) node{\cylAlpha};
\draw (0,-1) node {$C^{\alpha}_{1,1}$};
\end{tikzpicture} 
\end{center}
\begin{Rem}\label{Rem:twist}
    From the point of view of definition $C^{\alpha}_{1,1}$ is the map $(f_\alpha)_{\#}$ coming from the isomorphism $f_\alpha:\mathbb{Z}_p\rightarrow\mathbb{Z}_p$ taking the generator $1$ to the generator $\alpha$.
    From the point of view of cospans this is the cospan:
     \[ \xymatrix@C=.57em{
   & {\mathbb{Z}_p} \\
  {\mathbb{Z}_p} \ar[ur]^{f_\alpha} & & {\mathbb{Z}_p} \ar[ul]_{\id}} \] 
\end{Rem}
\item[G7.]
Swapping two boundary components, this can be thought of as the switching of the ordering of the two groups in $\{(\mathbb{Z}_p)_i\}_{i \in \{1,2\}}$, or by the family of two group triples $(G_1,\{(\mathbb{Z}_p)_1\},\{(\mathbb{Z}_p)_2\})$ and $(G_2,\{(\mathbb{Z}_p)_2\},\{(\mathbb{Z}_p)_1\})$, where $G_1$ and $G_2$ isomorphic to $\mathbb{Z}_p$. 
Geometrically this is represented by:
\begin{center} 
\begin{tikzpicture}
\draw (0,0) node{\swap};
\draw (0,-1.5) node {$S$};
\end{tikzpicture} 
\end{center}
\end{enumerate}

\begin{Thm}\label{Thm:generators}
    Any connected cobordism on the skeleton of $Cob^2_p$, can be obtained by gluings copies of the 6 cobordisms $C_{1,1},C_{0,1},C_{1,0},T^r_{1,1},P_{2,1},P_{1,2}$.
\end{Thm}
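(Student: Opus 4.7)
The plan is to induct on the complexity $c(g,r,n,u)=2g+n+u$ of the invariant from Lemma~\ref{Lem:invariants}, having first dealt with the exceptional triples of Definition~\ref{Def:GrpTriple}. Since, by that lemma, the isomorphism class of a connected cobordism is determined by the tuple $(g,r,n,u)$, it suffices to exhibit, for each tuple, a way of assembling a cobordism with those invariants from the six listed generators. The exceptional triples $(\{1\};\{\mathbb{Z}_p\},\emptyset)$ and $(\{1\};\emptyset,\{\mathbb{Z}_p\})$ are simply $C_{1,0}$ and $C_{0,1}$, and the sphere $(\{1\};\emptyset,\emptyset)$ is their composition. The base cases of the induction are $g=0$ with $n+u\leq 3$: there the cobordism is either a cylinder or a pair of pants, and the various in/out partitions are obtained from $C_{1,1}$, $P_{2,1}$, $P_{1,2}$ by capping off boundaries with $C_{0,1}$ or $C_{1,0}$ as needed.

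For the inductive step I would invoke Theorem~\ref{Thm:GropperSplit} to split the underlying $\PD^2$ pair along a $\mathbb{Z}_p$ subgroup, in two regimes. When $g\geq 1$, starting from the Wilkes standard-form relator $s_0=x_1^{p^r}[x_1,y_1]\cdots[x_g,y_g]s_1\cdots s_b$ of Theorem~\ref{Thm:WilkesPDn}, I would split along the $\mathbb{Z}_p$ subgroup generated by $x_1^{p^r}[x_1,y_1]s_1$. By Theorem~\ref{Thm:amalLES} this realises $G$ as a proper amalgamated product $G_1*_{\mathbb{Z}_p}T$, where $T$ is the group of $T^r_{1,1}$ (which carries the entire orientation twist) and $G_1$ has invariants $(g-1,\infty,n',u')$ with one extra boundary coming from the newly created glued $\mathbb{Z}_p$; by the inductive hypothesis $G_1$ is itself built from the generators, and gluing on $T^r_{1,1}$ completes the step. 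When $g=0$ and $n+u>3$, I would instead split along a separating $\mathbb{Z}_p$ to produce $G=G_1*_{\mathbb{Z}_p}F_2$, where $F_2$ is the group of a pair of pants, both factors have strictly smaller complexity, and induction applies.

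The principal obstacle is guaranteeing that the requisite splitting exists and is proper in the pro-$p$ sense, since pro-$p$ amalgams and HNN extensions can fail to be proper and in general do not admit the discrete presentation recalled in Remark~\ref{Rem:amalgams}. This is precisely where Theorem~\ref{Thm:GropperSplit} does the work: it identifies the proper pro-$p$ splittings of a pro-$p$ $\PD^2$ pair with pro-$p$ completions of explicit discrete splittings of the corresponding surface groups (cf.\ Theorem~\ref{Thm:sccsplit}), and ensures that any mod~$p$ compatible splitting is automatically proper. A secondary bookkeeping issue is that the subgroups produced by the splitting must be reconciled with the in/out partition $\mathcal{S}=\mathcal{N}\coprod\mathcal{U}$ of the original cobordism; the flexibility to pre- or post-compose with caps and cups, together with the choice of which side of the split is declared ``in'' versus ``out'', supplies the degrees of freedom needed to match any such assignment and closes the induction.
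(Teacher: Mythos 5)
Your proposal is correct in substance but runs in the opposite direction from the paper's proof. Both arguments begin with the same reduction: by Lemma~\ref{Lem:invariants} a connected cobordism is determined up to isomorphism by $(g,r,n,u)$, so it suffices to realise every tuple by some composite of the six generators. The paper then works \emph{bottom-up}: it explicitly glues $P_{2,1}$'s and cylinders to build a $(0,0,n,1)$ cobordism, dually a $(0,0,1,u)$ one, inserts $g-1$ copies of $T_{1,1}$ and one $T^r_{1,1}$ to reach $(g,r,1,1)$, and composes; the only inputs are Proposition~\ref{Prop:Gluing} and the computation of the invariants of these particular gluings, and properness is automatic because the gluings are along identity maps of standard generators (hence mod~$p$ compatible). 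You instead argue \emph{top-down}, cutting the given pair along $\mathbb{Z}_p$ subgroups such as $\langle x_1^{p^r}[x_1,y_1]s_1\rangle$ and inducting on $2g+n+u$. This works, but it leans on the full strength of Theorem~\ref{Thm:GropperSplit} to guarantee that the chosen splittings exist and are proper, which the paper's construction never needs; it is closer in spirit to the classical pants-decomposition proof, and it buys a structural statement (every cobordism admits a decomposition along explicit standard-form curves) rather than just the existence of one realisation per tuple. Two loose ends you should tie up: your inductive step for $g\geq 1$ cuts along an element involving $s_1$ and so presupposes $b\geq 1$, leaving the closed case $n=u=0$ to be handled separately (e.g.\ by first presenting it as a $(g,r,1,1)$ cobordism capped by $C_{0,1}$ and $C_{1,0}$); and you should note explicitly that the specific curves you cut along are mod~$p$ compatible in the sense of Theorem~\ref{Thm:GropperSplit}, which is what makes the resulting amalgams proper, rather than asserting properness via Theorem~\ref{Thm:amalLES}, which only tells you that a proper gluing of $\PD^2$ pairs is again a $\PD^2$ pair.
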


\begin{proof}
    We will show that we can glue the objects above to get a connected cobordism with invariants $(g,r,n,u)$ for every such tuple.
We start with the cobordism which has one pair of pants (more precisely $P_{2,1}$) and $(n-2)$ cylinders, so this is the cobordism with $n$ connected components, one of which is  $P_{2,1}$ and the rest are $C_{1,1}$. We glue to it a cobordism with $(n-3)$ connected components, one of which is a $P_{2,1}$ and the rest again are $C_{1,1}$, in the following way\footnote{\label{note1}the upper indexing here is used to which cobordism the generator belongs to and not to indicate a power of an element}: 
\begin{center} \begin{tikzpicture}[decoration={ markings, mark=at position 0.6 with {\arrow{>}}}]
\filldraw
(0,0) circle (2pt) node[left] {$P_{2,1}$};
\filldraw
(1.5,-1.5) circle (2pt) node[right] {$P_{2,1}$} ;
\filldraw
(0,-3) circle (2pt) node[left] {$C_{1,1}$};
\draw [postaction={decorate}] 
(0,0) -- node[midway, above, sloped] {\tiny $a^1b^1=a^2$} (1.4,-1.4);
\draw [postaction={decorate}] (0,-3)
-- node[midway, above, sloped] {\tiny $\alpha^1_1=b^2$} (1.4,-1.6);
\filldraw (0,-3.7) circle (2pt) node[left] {$C_{1,1}$};
\filldraw(1.5,-3.7) circle (2pt) node[right] {$C_{1,1}$}; 
\draw [postaction={decorate}] (0,-3.7)
--node[midway, above] {\tiny $\alpha^1_2=\alpha_2^2$} 
(1.4,-3.7); 
\filldraw (0.75,-3.9) circle (0.5pt);
\filldraw (0.75,-4.1) circle (0.5pt);
\filldraw (0.75,-4.3) circle (0.5pt);
\filldraw (0,-5) circle (2pt) node[left] {$C_{1,1}$};
\filldraw (1.5,-5) circle (2pt) node[right] {$C_{1,1}$};
\draw [postaction={decorate}] (0,-5)
-- node[midway, above] {\tiny $\alpha^1_{n-1}=\alpha_{n-1}^2$}  
(1.4,-5);
\end{tikzpicture}\end{center}
the result will have one connected component of invariant $(0,0,3,1)$ and $(n-3)$ $C_{1,1}$.
We glue again to it a cobordism with one pair of pants, and the rest of the components $C_{1,1}$.
After $(n-1)$ times we get the following\footref{note1} diagram:
\begin{center} \begin{tikzpicture}[shorten >=3pt, decoration={ markings, mark=at position 0.6 with {\arrow{>}}}]
\filldraw 
(0,-0.3) circle (2pt) node[left] {$P_{2,1}$} ;
\filldraw(1.5,-1.5) circle (2pt) node[above] {$P_{2,1}$};
\filldraw(0,-2.7) circle (2pt) node[left] {$C_{1,1}$};
\draw[postaction={decorate}]
(0,-0.3)
-- node[pos=0.4, above, sloped] {\tiny $a^1b^1=a^2$} 
(1.5,-1.5);
\draw[postaction={decorate}] (0,-2.7)-- node[midway, above, sloped] {\tiny $\alpha_2^1=b^2$} 
(1.5,-1.5);
\filldraw (0,-3.7) circle (2pt) node[left] {$C_{1,1}$};
\filldraw(1.5,-3.7) circle (2pt) node[above] {$C_{1,1}$};
\draw[postaction={decorate}] (0,-3.7) 
--node[pos=0.4, above] {\tiny $\alpha_3^1=\alpha_3^2$} 
(1.5,-3.7);
\draw[postaction={decorate}] 
(1.5,-1.5)
-- node[midway, above, sloped] {\tiny $a^2b^2=a^3$} (3.4,-2.6);
\filldraw(3.4,-2.6) circle (2pt) node[above] {$P_{2,1}.$};
\draw[postaction={decorate}](1.5,-3.7)
-- node[midway, above, sloped] {\tiny $\alpha_3^2=b^3$} 
(3.4,-2.6);
\filldraw (0.75,-3.9) circle (0.5pt);
\filldraw (0.75,-4.1) circle (0.5pt);
\filldraw (0.75,-4.3) circle (0.5pt);
\filldraw (0,-5) circle (2pt) node[below] {$C_{1,1}$};
\filldraw (1.5,-5) circle (2pt) node[below] {$C_{1,1}$};
\filldraw(3.4,-5) circle (2pt) node[below] {$C_{1,1}$};
\draw[postaction={decorate}] (0,-5)
-- node[midway, above] {\tiny $\alpha_{n}^1=\alpha_{n}^2$}  
(1.5,-5);
\draw[postaction={decorate}](1.5,-5) -- node[midway, above] {\tiny $\alpha_{n}^2=\alpha_{n}^3$}  (3.4,-5);
\filldraw (3.7,-4.1) circle (1pt);
\filldraw (4,-4.1) circle (1pt);
\filldraw (4.3,-4.1) circle (1pt);
\filldraw 
(5,-3.5) circle (2pt) node[above] {$P_{2,1}$};
\filldraw
(7,-5) circle (2pt) node[right] {$P_{2,1}$};
\filldraw
(5,-5) circle (2pt) node[below] {$C_{1,1}$};
\draw[postaction={decorate}] 
(5,-3.5) 
-- node[midway, above, sloped] {\tiny $a^{n-1}b^{n-1}=a^{n}$} 
(7,-5);
\draw[postaction={decorate}] (5,-5)-- node[pos=0.4, above, sloped] {\tiny $\alpha^{n-1}_n=b^{n}$} 
(7,-5);
\end{tikzpicture}\end{center}

The above diagram (in the case $n=5$) can be illustrated of geometrically as:
\begin{center} 
\begin{tikzpicture}[outer sep=auto, decoration={
    markings,
    mark=at position 0.6 with {\arrow{>}}}]
 \matrix (name) [matrix of nodes, column sep={2.5cm,between origins}, row sep={2.25cm,between origins}]{
     {\PantsL} &|[yshift=-1cm]|{\PantsL} &      \\ [-1.1cm]
    {\cyl} & \\ [-1cm] 
   {\cyl} &{\cyl} & |[yshift=0.2cm]|{\PantsL} & \\ [-1cm]
   {\cyl} &{\cyl}  &{\cyl}  & |[yshift=0.2cm]|{\PantsL}\\  };
   \draw[line width=0.8pt, postaction={decorate}] (-3,1.9)   -- (-2.1,1.5);
   \draw[line width=0.8pt, postaction={decorate}] (-3,-0.1)   -- (-2.1,0.1);
   \draw[line width=0.8pt, postaction={decorate}] (-3,-1.35)   -- (-2.1,-1.2);
   \draw[line width=0.8pt, postaction={decorate}] (-3,-2.6)   -- (-2.1,-2.5);
    \draw[line width=0.8pt, postaction={decorate}] (-0.5,0.9)   -- (0.4,0.3);
   \draw[line width=0.8pt, postaction={decorate}] (-0.5,-1.3)   -- (0.5,-1.1);
   \draw[line width=0.8pt, postaction={decorate}] (-0.5,-2.6)   -- (0.4,-2.5);
    \draw[line width=0.8pt, postaction={decorate}] (2.1,-0.3)   -- (3,-0.9);
   \draw[line width=0.8pt, postaction={decorate}] (2.1,-2.6)   -- (2.9,-2.4);
\end{tikzpicture}
\end{center}
So we get a connected cobordism with invariant $(0,0,n,1)$. A similar argument, with $P_{1,2}$ gives us a connected cobordism with invariant $(0,0,1,u)$. By gluing $g-1$ orientable tori: $T_{1,1}$, and then one torus orientable up to level $r$:  $T_{1,1}^{r}$, we get a connected cobordism with invariant $(g,r,1,1)$. Finally, gluing the cobordism with invariant $(0,0,n,1)$ to the cobordism with invariant $(g,r,1,1)$ and then to the cobordism with invariant $(0,0,1,u)$, we get the desired cobordism of invariant $(g,r,n,u)$. 
\end{proof}

To keep track of the twists coming from the choice of reduction to the skeleton we then have the following:
Suppose we have a connected cobordism: \[(G,\mathcal{N}_I,\mathcal{U}_J, \phi,\tau).\]
All boundaries are isomorphic to a collection of $\mathbb{Z}_p$. This cobordism is the composition of a family of twists indexed by $I$, i.e. a cobordism with connected components for $i\in I$, of the form: 
\[((\mathbb{Z}_p)_i,\{N_i\},\{(\mathbb{Z}_p)_i\}, \{\phi_i\},\id)\] 
followed by 
\[(G,\phi(\mathcal{N}):=\{\phi_i(N_i)\}_{i \in I}, \ \tau(\mathcal{U}):=\{\tau_j(U_j)\}_{j \in J}, \  \id, \ \id)\] followed by a family of twists indexed by $J$, where the connected component of the cobordism at $j\in J$:
\[((\mathbb{Z}_p)_j,\{(\mathbb{Z}_p)_j\},\{U_j\}, \id,\{\tau_j\}).\]
So one gets that by also adding in the generators $C^{\alpha}_{1,1}$ we get we can get any connected cobordism.
By also considering the cobordism $S$, we see we can reorder the boundaries in whatever order we want. Since a general cobordism is (up to reordering) a disjoint union of connected cobordisms, we get we can also generate all the not necessarily connected cobordisms.

\subsection{Relations}\label{subsect:Rels}
Now that we have generators for every cobordism, we would like to understand the relations among these generators. In other words, we want to know when two ways of gluing cobordisms, give isomorphic cobordisms (i.e. the same invariants $(g,r,n,u)$).
\begin{Rem}\label{Rem:SwapRelations}
    We will be omitting relations involving the swap, as there many of them. Most are the same as in the classical setting, and dealing with them follows exactly in a similar manner (this can be found in Section 1.4 of \cite{kock2004frobenius}, where they are refereed to as twist).
    The only new relation is of course, that for any $\alpha,\beta\in\mathbb{U}_p$ one has that:
    \[S\circ( C_{1,1}^\alpha\coprod C_{1,1}^{\beta})=( C_{1,1}^\beta\coprod C_{1,1}^{\alpha})\circ S\]
    We collectively refer to them as (RS).
\end{Rem}

We now proceed with listing the easier to compute relations, where one glues objects in the skeleton, along the identity map of $\mathbb{Z}_p$. These will later give us the structure of a Frobenius algebra:
\begin{Thm}\label{Thm:BasicRelations}
We have the following five relations among the cobordisms $C_{1,1},C_{0,1},C_{1,0},P_{2,1},P_{1,2}$:
\begin{center} 
\begin{tikzpicture}[shorten >=1.5pt,  every edge/.style={draw,  postaction={nomorepostaction,decorate,  decoration={markings,mark=at position 0.6 with {\arrow{>}}}   }} ]
 \matrix [column sep=1mm,row sep=1mm]
{
\graph[nodes={circle,fill,inner sep=0.8mm},empty nodes]{1--2,3--2}; & \node {$=$}; &
\graph[nodes={circle,fill,inner sep=0.8mm},empty nodes]{4}; & \node {$=$}; & \graph[nodes={circle,fill,inner sep=0.8mm},empty nodes]{5--6,7--6};\\
};
\draw (1) node[above = 1.5pt]{$C_{1,1}$};
\draw (2) node[above = 1.5pt]{$P_{2,1}$};
\draw (3) node[above left = 1pt and -0.3cm]{$C_{0,1}$};
\draw (4) node[above = 1.5pt]{$C_{1,1}$};
\draw (5) node[above = 1.5pt]{$C_{0,1}$};
\draw (6) node[above = 1.5pt]{$P_{2,1}$};
\draw (7) node[above left = 1pt and -0.3cm]{$C_{1,1}$};
\draw (-3,0) node {(R1)};
\end{tikzpicture}
\begin{tikzpicture}[shorten >=1.5pt,  every edge/.style={draw,  postaction={nomorepostaction,decorate,  decoration={markings,mark=at position 0.6 with {\arrow{>}}}   }} ]
 \matrix [column sep=1mm,row sep=1mm]
{
\graph[nodes={circle,fill,inner sep=0.8mm},empty nodes]{1--{2,3}}; & \node {$=$}; &
\graph[nodes={circle,fill,inner sep=0.8mm},empty nodes]{4}; & \node {$=$}; & \graph[nodes={circle,fill,inner sep=0.8mm, branch right},empty nodes]{5--{6,7}};\\
};
\draw (1) node[above = 1.5pt]{$P_{1,2}$};
\draw (2) node[above = 1.5pt]{$C_{1,1}$};
\draw (3) node[above = 1.5pt]{$C_{1,0}$};
\draw (4) node[above = 1.5pt]{$C_{1,1}$};
\draw (5) node[above = 1.5pt]{$P_{1,2}$};
\draw (6) node[above = 1.5pt]{$C_{1,0}$};
\draw (7) node[above = 1.5pt]{$C_{1,1}$};
\draw (-3,0) node {(R2)};
\end{tikzpicture}
\end{center}
\begin{center} 
\begin{tikzpicture}[shorten >=1.5pt,  every edge/.style={draw,  postaction={nomorepostaction,decorate,  decoration={markings,mark=at position 0.6 with {\arrow{>}}}   }} ]
 \matrix [column sep=1mm,row sep=1mm]
{
\graph[nodes={circle,fill,inner sep=0.8mm},empty nodes]{1--2,3--2}; & \node {$=$}; &
 \graph[nodes={circle,fill,inner sep=0.8mm},empty nodes]{5--6,7--6};\\
};
\draw (1) node[above = 1.5pt]{$C_{1,1}$};
\draw (2) node[above = 1.5pt]{$P_{2,1}$};
\draw (3) node[above left = 1pt and -0.3cm]{$P_{2,1}$};
\draw (5) node[above = 1.5pt]{$P_{2,1}$};
\draw (6) node[above = 1.5pt]{$P_{2,1}$};
\draw (7) node[above left = 1pt and -0.3cm]{$C_{1,1}$};
\draw (-3.5,0) node {(R3)};
\end{tikzpicture}
\begin{tikzpicture}[shorten >=1.5pt,  every edge/.style={draw,  postaction={nomorepostaction,decorate,  decoration={markings,mark=at position 0.6 with {\arrow{>}}}   }} ]
 \matrix [column sep=1mm,row sep=1mm]
{
\graph[nodes={circle,fill,inner sep=0.8mm},empty nodes]{1--{2,3}};  & \node {$=$}; & \graph[nodes={circle,fill,inner sep=0.8mm, branch right},empty nodes]{5--{6,7}};\\
};
\draw (1) node[above = 1.5pt]{$P_{1,2}$};
\draw (2) node[above = 1.5pt]{$C_{1,1}$};
\draw (3) node[above = 1.5pt]{$P_{1,2}$};
\draw (5) node[above = 1.5pt]{$P_{1,2}$};
\draw (6) node[above = 1.5pt]{$P_{1,2}$};
\draw (7) node[above = 1.5pt]{$C_{1,1}$};
\draw (-3.5,0) node {(R4)};
\end{tikzpicture}
\end{center}
\begin{center}
\begin{tikzpicture}[shorten >=1.5pt,  every edge/.style={draw,  postaction={nomorepostaction,decorate,  decoration={markings,mark=at position 0.6 with {\arrow{>}}}   }} ]
 \matrix [column sep=1mm,row sep=1mm]
{
\graph[nodes={circle,fill,inner sep=0.8mm},empty nodes]{1--2,4--3,1--3}; & \node {$=$}; &
\graph[nodes={circle,fill,inner sep=0.8mm},empty nodes]{5--6}; & \node {$=$}; & \graph[nodes={circle,fill,inner sep=0.8mm},empty nodes]{7--8,9--{8,10}};\\
};
\draw (1) node[above = 1.5pt]{$P_{1,2}$};
\draw (2) node[above = 1.5pt]{$C_{1,1}$};
\draw (3) node[above right = 1.5pt and -0.3cm]{$P_{2,1}$};
\draw (4) node[above = 1.5pt]{$C_{1,1}$};
\draw (5) node[above = 1.5pt]{$P_{2,1}$};
\draw (6) node[above = 1.5pt]{$P_{1,2}$};
\draw (7) node[above = 1.5pt]{$C_{1,1}$};
\draw (8) node[above = 1.5pt]{$P_{2,1}$};
\draw (9) node[above left = 1pt and -0.3cm]{$P_{1,2}$};
\draw (10) node[above = 1pt]{$C_{1,1}$};
\draw (-4,0) node {(R5)};
\end{tikzpicture}
\end{center}

\end{Thm}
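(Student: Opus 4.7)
The plan is to verify each of the five relations by explicitly computing the amalgamated free pro-$p$ product that defines the composed cobordism on both sides and then comparing the resulting triples using the classification of Lemma \ref{Lem:invariants}. Since all five relations involve only gluings along the identity map of a single $\mathbb{Z}_p$, I first observe that every gluing is mod-$p$ orientation compatible; Theorem \ref{Thm:GropperSplit} then says each is proper, and Proposition \ref{Prop:Gluing} ensures the composites are again pro-$p$ $\PD^2$ triples. Consequently, checking a relation reduces to checking agreement of the discrete tuple of invariants $(g,r,n,u)$.

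For (R1), I would attach the cap $C_{0,1}=(\{1\};\emptyset,\{\mathbb{Z}_p\})$ along one input leg of $P_{2,1}=(\langle a,b\rangle;\{\langle a\rangle,\langle b\rangle\},\{\langle ab\rangle\})$. The pushout with the trivial group kills the corresponding generator: gluing along $\langle a\rangle$ leaves $\langle b\rangle\cong\mathbb{Z}_p$, with in-boundary $\langle b\rangle$ and out-boundary $\langle ab\rangle=\langle b\rangle$, which is precisely $C_{1,1}$; the other gluing is symmetric. Relation (R2) follows by reversing in/out roles. For (R3), both composites of three $P_{2,1}$'s produce, via iterated amalgamated products, the free pro-$p$ group on three generators with three in-boundaries generated by those generators and a single out-boundary generated by their product; by Wilkes' classification (Theorem \ref{Thm:WilkesPDn}) this is the unique triple with invariants $(0,\infty,3,1)$. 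Relation (R4) is dual. For (R5), each of the three composites in the Frobenius relation yields a cobordism built from a free pro-$p$ group on three generators with two in-boundaries and two out-boundaries and no handle, and direct computation of the amalgamated product verifies that all three have invariants $(0,\infty,2,2)$.

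The main bookkeeping obstacle is keeping track, in each pushout, of which boundary subgroups of the original cobordisms are identified, which survive as in-boundaries of the composite, and which survive as out-boundaries; once this is set up for each diagram, the underlying group computation is routine because the amalgamation is along a pro-cyclic subgroup mapping to a free pro-$p$ generator, so presentations combine in the familiar way of Remark \ref{Rem:amalgams}(1) (properness of the gluing ensures no hidden collapse occurs). Once invariants are matched, the triples are isomorphic by the standard-form uniqueness part of Theorem \ref{Thm:WilkesPDn} together with Lemma \ref{Lem:invariants}, which completes the proof of each relation.
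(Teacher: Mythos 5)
Your proposal is correct and follows essentially the same route as the paper: reduce each relation to a comparison of the invariants $(g,r,n,u)$ via Lemma \ref{Lem:invariants}, observe that all gluings are along identity maps with no loops so the results are orientable ($r=\infty$), and verify by explicit computation of the amalgamated products (the paper carries out (R3) in detail and leaves the rest as routine, exactly as you do for (R1), (R3), and (R5)). Your additional remarks on properness via Theorem \ref{Thm:GropperSplit} and Proposition \ref{Prop:Gluing} are consistent with, though not spelled out in, the paper's argument.
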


\begin{proof}
    By Lemma \ref{Lem:invariants}, it is enough to check for each relation, that the gluings give rise to the same invariants of cobordisms $(g,r,n,u)$, also as explained in subsection \ref{subsect:gluing}, we can compute the gluings one at a time.
    Also note that since all the objects we glue are orientable, and there are no loops in the diagram, (i.e. we only have free amalgamated products over $\mathbb{Z}_p$ and no HNN extensions), so the result is always orientable, and so $r=\infty$.
    
    We demonstrate relation (R3) as an example.
    Starting with the left hand side of the equality in (R3), notice that $C_{1,1}$ glued to one of the in components of the $P_{2,1}$ gives the cobordism \[(\langle a_1,b_1 \rangle ; \{\langle a_1 \rangle,\langle b_1 \rangle \},\{\langle a_1b_1 \rangle\}).\]
 Gluing the other $P_{2,1}$ from its out component to the above along, we get:
 \[(\langle a_1,b_2,a_2 \rangle ; \{\langle a_1 \rangle ,\langle b_2 \rangle,\langle a_2 \rangle\},\{\langle a_1 b_2 a_2 \rangle\})\]
so the invariant is $(g,r,n,u)=(0,\infty,3,1)$. 
    We now discuss the right hand side of the equality in (R3).
    We first glue $P_{2,1}$ from its out component to one of the in components of the other $P_{2,1}$ we get \[(\langle a_1,b_2,a_2 \rangle ; \{\langle a_1 \rangle,\langle b_2 \rangle,\langle a_2 \rangle\},\{\langle a_1 b_2 a_2 \rangle \}).\]
    Gluing \[C_{1,1}=(\langle c \rangle; \{\langle c \rangle \},\{\langle c \rangle\})\] to the other in component of the $P_{2,1}$ we get \[(\langle c,b_2,a_2 \rangle ; \{\langle c \rangle,\langle b_2 \rangle,\langle a_2 \rangle\},\{\langle c b_2 a_2 \rangle\}),\] which again has invariant $(g,r,n,u)=(0,\infty,3,1)$.
    So we get that these are the isomorphic cobordisms, proving the relation in (R3).
    The rest are similar and the computations are an easy exercise. 
\end{proof}

To understand the other relations, we first need the following proposition:
\begin{Prop}\label{Prop:KleinRelation}
    We have:
    \begin{enumerate}
        \item \[T^r_{1,1}*_{\mathbb{Z}_p}T^{r'}_{1,1}\cong T_{1,1}*_{\mathbb{Z}_p}T^{\min(r',r)}_{1,1}\]
        
        \item Let $\alpha,\beta\in\mathbb{U}_p$, such that $\alpha \beta^{-1}\in (1+p^r\mathbb{Z}_p)$ but $\alpha \beta^{-1}\not\in (1+p^{r+1}\mathbb{Z}_p)$. Suppose we glue the cobordisms \[P_{1,2}\cong (\langle a,b \rangle ; \{\langle ab \rangle\},\{\langle a \rangle,\langle b \rangle\})\] to \[P_{2,1}\cong(\langle c,d \rangle ; \{\langle c \rangle,\langle d \rangle \}, \{\langle cd \rangle\})\]  by isomorphisms sending $a$ to $c^\alpha$ and $b$  to $d^\beta$.  The result of such gluing is $T^r_{1,1}$.  
\end{enumerate}
\end{Prop}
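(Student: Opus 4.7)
The plan is to apply Lemma~\ref{Lem:invariants}, which classifies connected cobordisms by the quadruple $(g,r,n,u)$; so for each equality it suffices to check that both sides produce the same invariants. Rank, genus, and the numbers of ingoing and outgoing boundaries will follow immediately from the gluing formulae; the delicate quantity in both parts is the orientability level, which I read off from the abelianization.

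For part (1) I would glue $T^r_{1,1}=\langle x,y,s_1\rangle$ (with $s_0=s_1 x^{p^r}[x,y]$) and $T^{r'}_{1,1}=\langle x',y',s_1'\rangle$ (with $s_0'=s_1'(x')^{p^{r'}}[x',y']$) along the identity identification $s_0\sim s_1'$. This is trivially mod-$p$ compatible, and hence proper by Theorem~\ref{Thm:GropperSplit}, so Theorem~\ref{Thm:amalLES} certifies that the result is a $\PD^2$ pair. After eliminating $s_1'$, the amalgam is free pro-$p$ of rank $3+3-1=5$ on $\{x,y,s_1,x',y'\}$, with surviving in boundary $\langle s_1\rangle$ and out boundary $\langle s_0'\rangle$ where $s_0'=s_1 x^{p^r}[x,y](x')^{p^{r'}}[x',y']$. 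So $(g,n,u)=(2,1,1)$, and abelianizing gives $[s_0']^{ab}-[s_1]^{ab}=p^r[x]^{ab}+p^{r'}[x']^{ab}$ in $G^{ab}$. Factoring out $p^{\min(r,r')}$ exhibits this as $p^{\min(r,r')}$ times a primitive vector that extends to a basis of $G^{ab}$, so the orientability level is $\min(r,r')$. The same computation on the right-hand side (with the orientable factor contributing no $p$-power) gives the same value.

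For part (2) I would perform the gluing in two stages. The identification $a\sim c^\alpha$ is a proper pro-$p$ amalgamated product since $\alpha\in\mathbb{U}_p$ makes it mod-$p$ compatible; the result is $\langle b,c,d\rangle$, a four-holed sphere. The identification $b\sim d^\beta$ is then a proper pro-$p$ HNN extension, yielding $G=\langle c,d,t\rangle$ (free pro-$p$ of rank $3$) with $b=t^{-1}d^\beta t$. The surviving boundaries are $\langle cd\rangle$ (out) and $\langle c^\alpha t^{-1}d^\beta t\rangle$ (in), so $(g,n,u)=(1,1,1)$ and the pair is some $T^{r''}_{1,1}$. To pin down $r''$, any generator of the in-boundary has the form $s_1^\lambda=(c^\alpha t^{-1}d^\beta t)^\lambda$ for $\lambda\in\mathbb{U}_p$, with image $\lambda\alpha[c]+\lambda\beta[d]$ in $G^{ab}\cong \mathbb{Z}_p[c]\oplus\mathbb{Z}_p[d]\oplus\mathbb{Z}_p[t]$. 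The standard-form relation $s_0=s_1 x^{p^{r''}}[x,y]$ forces $[s_0]^{ab}-[s_1]^{ab}$ to be $p^{r''}$ times a primitive basis vector, so $r''$ equals the maximum over $\lambda\in\mathbb{U}_p$ of the $p$-adic valuation of $(1-\lambda\alpha)[c]+(1-\lambda\beta)[d]$. The ultrametric inequality applied to the difference $(1-\lambda\alpha)-(1-\lambda\beta)=\lambda(\beta-\alpha)$ bounds this maximum by $v_p(1-\alpha\beta^{-1})=r$, while the choice $\lambda=\beta^{-1}$ saturates the bound, producing $(1-\alpha\beta^{-1})[c]$ whose primitive residue extends to a basis of $G^{ab}$ together with $[s_1^{\beta^{-1}}]^{ab}$ and $[t]$. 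Hence $r''=r$.

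The main obstacle is the passage from abelianized data to a full Wilkes standard presentation. This is absorbed by invoking Theorem~\ref{Thm:WilkesPDn}: once we know (from properness together with Theorem~\ref{Thm:amalLES}) that the glued pair is a $\PD^2$ pair with a given rank, number of boundaries, and orientability level, the pair is determined up to isomorphism. Thus verifying the abelianized invariants is sufficient, and the only substantive computation is the orientability-level maximization carried out above.
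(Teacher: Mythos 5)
Your proposal is correct, and for part (2) it is essentially the paper's own argument: the paper also performs the gluing in two stages (an amalgamation identifying $a$ with $c^\alpha$, then an HNN extension identifying $b$ with $d^\beta$), justifies the explicit presentation via Theorem \ref{Thm:GropperSplit}, and reads off the orientability level from an abelianization computation. For part (1), however, you take a genuinely more computational route: the paper argues softly, citing the classification of gluings in \cite{GropperArxiv} and observing that the orientation character of the amalgam is determined by the characters of the two factors, which forces the level to be exactly $\min(r,r')$; you instead write down the rank-$5$ free presentation of the amalgam and extract the level from $[s_0']-[s_1]$ in $G^{\mathrm{ab}}$. Your approach buys self-containedness at the price of needing to justify that the $p$-adic valuation of $[s_0]-[s_1]$ really computes the orientability level. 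That justification does go through, but only because this valuation must be maximized over unit rescalings of the boundary generators (as you do explicitly in part (2) via the $\lambda$-maximization, and as happens to be achieved at $\lambda_0=\lambda_1=1$ in part (1)); the paper's variant — taking the torsion of $G^{\mathrm{ab}}$ modulo the images of \emph{both} boundary subgroups, which is manifestly independent of the choice of generators — is the cleaner packaging of the same invariant and avoids the maximization altogether. The one point you should make explicit in part (1) is that the presentation of the pro-$p$ amalgam as a free group of rank $5$ is the pushout presentation, legitimate here because the identified element is a free generator of one factor and the gluing is mod-$p$ compatible, hence proper by Theorem \ref{Thm:GropperSplit}; this is exactly the role \cite{GropperArxiv} plays in the paper's proof.
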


\begin{proof}
The first statement follows from the explicit classification of $\mathbb{Z}_p$ splittings and gluings of pro-$p$ $\PD^2$ groups in \cite{GropperArxiv}. The main point is that given two relative pairs with invariants $(1,1,1,r)$ and $(1,1,1,r')$, the result of the gluing is $(2,1,1,s)$ for some orientation level.
By studying the orientation character one gets that $s\leq r$ and $s\leq r'$. On the other one also cannot have that $s<\min(r,r')$ as the character is determined by the characters on each component, and so overall one gets that the gluing $T^r_{1,1}*_{\mathbb{Z}_p}T^{r'}_{1,1}$ has invariant $(2,1,1,\min(r,r'))$, which also in particular the invariant of $T_{1,1}*_{\mathbb{Z}_p}T^{\min(r',r)}_{1,1}$. 
For the second statement, first note that by Theorem \ref{Thm:GropperSplit}, the condition that $\alpha=\beta$  mod $p$, gives us that the pro-$p$ gluing is the pro-$p$ completion of a discrete gluing. 
In other words, when we glue \[(\langle a,b \rangle; \{\langle ab \rangle \},\{\langle a \rangle ,\langle b \rangle \})\] to \[(\langle c,d \rangle ; \{\langle c \rangle,\langle d \rangle\}, \{\langle cd \rangle \})\]  by isomorphisms from $\langle a \rangle $ to $\langle c \rangle $ and $\langle b \rangle$  to $\langle d \rangle$, we know what the presentation of the resulting gluing is.
We can do this gluing in two stages, first an amalgamation, then an HNN. The isomorphism must send $a$ to $c^\alpha$, a $\mathbb{U}_{p}$ power, $\alpha$ of $c$, and similarly $b$ is sent to $d^\beta$. 
The result of the gluing of $a$ to $c^\alpha$ is the pair \[(\langle c,b,d \rangle ;\{\langle c^\alpha b \rangle,\langle b \rangle,\langle d \rangle,\langle cd \rangle\}).\]
Now gluing $b$ to $d^\beta$, we get the HNN relation, $b=td^\beta t^{-1}$ .
We get the pair \[(\langle c,b,d \rangle ; \{\langle c^\alpha td^\beta t^{-1} \rangle,\langle cd \rangle\}),\] after quotienting out by the boundary subgroups, we see that the abelianization is the group $\langle d \ | \ d^{\alpha}d^{\beta^{-1}} \rangle$, in other words, the orientability level, is the highest power of $p$  dividing $\alpha \beta^{-1} -1$, where $\alpha$  was the change in orientation of the first gluing, and $\beta$ is the change in orientation of the second gluing. In other words, the result of the gluing is isomorphic to $T^r_{1,1}$.

\end{proof}
\begin{Rem}
    The first part of the proposition is analogues to how the connected sum of two Klein bottles is homeomorphic to a connected sum of a torus and a Klein bottle. 
    The second part shows how we can glue ``pair of pants" in different ways to get $T^r_{1,1}$ for different $r$. This is similar to the topological story in which one can glue two pairs of pants in a compatible way to get a twice punctured torus, and in a non compatible way to get a twice punctured Klein bottle. 
    From this we also get that in fact we don't need the tori to generate the category, but rather information about gluings and changing orientation.
\end{Rem}

The last thing we want to understand what happens when we ``change the orientation" of the boundary components of such cobordisms. By this we mean gluing some $C_{1,1}^\alpha$, to a cobordism, which can also be thought of as twisting the inclusion maps $\phi, \tau$ of the cobordism.
Under the point of view of groups and subgroups, saying that composing $C_{1,1}^\alpha$ (twisting the out boundary by $\alpha$) with a cobordism of the triple $(G,\{N\},\{U\})$ does not change the cobordism (i.e. gives an isomorphic cobordism), is equivalent to having an automorphism of $G$, which sends $U$ to (a conjugate of) $U$, and $N$ to (a conjugate of) $N^\alpha$ (i.e. the automorphism of $G$ restricted to $N$ is the automorphism of taking $\alpha$ power).
\begin{Prop}\label{Prop:mobius}
    Let $\alpha\in \mathbb{U}_p$, we have the following 
    \begin{enumerate}
    \item There is an isomorphism of the free pro-$p$ group generated by $G:=\langle a, b\rangle$, which sends $a$ to a conjugate of $a^\alpha$, $b$ to a conjugate of $b^\alpha$, and $ab$ to a conjugate of $(ab)^\alpha$
    
        \item For any $r\in\mathbb{Z}_{>0}$ such that $\alpha\equiv 1 \pmod{p^r}$.
    Let $F=\langle x,y,s_1 \rangle$ be the free pro-$p$ group on three generators.
    There exists an automorphism of $F$ sending $s_1$ to a conjugate of itself, and $s_0=s_1 x^{p^r}[x,y]$ to a conjugate of $s_0^\alpha$. 
    \end{enumerate}
\end{Prop}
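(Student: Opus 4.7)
The plan is to prove both parts via Wilkes' classification of pro-$p$ $PD^2$ pairs (Theorem \ref{Thm:WilkesPDn}) combined with an abelianization analysis, reducing each statement to the existence of a standard generating set in which the boundary generators are prescribed $\alpha$-scalings.

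For part (1), the pair $(F_2, \{\langle a\rangle, \langle b\rangle, \langle ab\rangle\})$ has invariants $(g,r,n)=(0,\infty,3)$, and since $\alpha \in \mathbb{U}_p$ is a unit, $\langle a^\alpha\rangle = \langle a\rangle$ and similarly for the other two subgroups, so the collection of boundary subgroups is unchanged by the twist. I would set $\phi(a) = a^\alpha$ and $\phi(b) = w b^\alpha w^{-1}$ for some $w \in F_2$ to be determined. The resulting map automatically extends to an automorphism by the pro-$p$ Burnside basis theorem, since the images descend to $\alpha \bar{a}, \alpha \bar{b}$ in the abelianization $\mathbb{Z}_p^2$ with determinant $\alpha^2 \in \mathbb{Z}_p^*$. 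The substantive step is producing $w$ with $\phi(ab) = a^\alpha w b^\alpha w^{-1}$ conjugate to $(ab)^\alpha$: by Wilkes' theorem, there exists a standard basis $\{s_1, s_2\}$ in which $s_1, s_2, s_1 s_2$ are conjugates of generators $a^\lambda, b^\mu, (ab)^\nu$ of the corresponding boundary subgroups, and passing to the abelianization, the equation $\lambda \bar{a} + \mu \bar{b} = \nu(\bar{a} + \bar{b})$ in $\mathbb{Z}_p^2$ forces $\lambda = \mu = \nu$, so the three scalings agree.

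For part (2), the condition $\alpha \equiv 1 \pmod{p^r}$ is precisely the compatibility condition with the orientation character of $(F, \{\langle s_1\rangle, \langle s_0\rangle\})$: the character sends $x, s_1 \mapsto 1$ and $y \mapsto (1-p^r)^{-1}$, which forces the character value on any generator of $\langle s_0\rangle$ to lie in $1 + p^r\mathbb{Z}_p$. Twisting $s_0 \mapsto s_0^\alpha$ with $\alpha \equiv 1 \pmod{p^r}$ preserves the orientability level $r$, so the invariants $(g,r,n,u)=(1,r,1,1)$ of the twisted pair match those of the original, and the analogous Wilkes-plus-abelianization argument produces an automorphism of $F$ fixing $s_1$ up to conjugation and sending $s_0$ to a conjugate of $s_0^\alpha$.

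The main technical obstacle is promoting the single scaling factor $\lambda$ produced by one application of Wilkes' theorem to every $\alpha \in \mathbb{U}_p$ (respectively every $\alpha \equiv 1 \pmod{p^r}$). The set of realized scalings forms a subgroup of $\mathbb{Z}_p^*$ (closed under composition of automorphisms and under pro-$p$ limits of compatible automorphisms of finite quotients), and it contains $-1$ via the explicit involution $a \mapsto a^{-1}, b \mapsto b^{-1}$ (which sends $ab$ to $a^{-1}b^{-1} = a \cdot (ab)^{-1} \cdot a^{-1}$). To exhaust $\mathbb{U}_p$ I would either iterate Wilkes with differently ordered boundary tuples to produce additional independent scalings, or invoke the refined form of Wilkes' classification from the proof of Theorem 2.2 in \cite{wilkes2020classification}, which tracks the natural $\mathbb{Z}_p^*$-action on the choice of standard boundary generator directly and thereby realizes every element of $\mathbb{U}_p$ (respectively every element of $1 + p^r\mathbb{Z}_p$ in part (2)).
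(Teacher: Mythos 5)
There is a genuine gap: you never actually construct the automorphism realizing a given $\alpha$. Your Wilkes-plus-abelianization step only proves a \emph{consistency} statement --- if some standard basis has boundary generators conjugate to $a^\lambda, b^\mu, (ab)^\nu$, then $\lambda=\mu=\nu$ --- but a single application of Theorem \ref{Thm:WilkesPDn} gives you one such basis (e.g.\ the identity basis, with $\lambda=1$) and no control over which $\lambda$ occurs. The entire content of the proposition is the surjectivity of the realized scalings onto $\mathbb{U}_p$ (resp.\ $1+p^r\mathbb{Z}_p$), and that is exactly the step you defer, offering only two unexecuted strategies. The ``subgroup of realized scalings'' observation does not help: closure under pro-$p$ limits is asserted without argument, and the one nontrivial element you exhibit, $-1$, is not even in $\mathbb{U}_p$ (as $p$ is odd), so the closed subgroup it generates is disjoint from the set you need to exhaust. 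Similarly, in part (2) the invariant-matching argument is vacuous for this purpose: $\langle s_0\rangle=\langle s_0^\alpha\rangle$ as subgroups, so the twisted and untwisted pairs are literally the same pair and of course have the same invariants $(1,r,1,1)$; an isomorphism of \emph{pairs} only controls the boundary subgroups up to conjugacy, whereas the proposition demands control of the marked generator $s_0\mapsto s_0^\alpha$, which is precisely the refinement that does not follow from the classification of pairs.

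The paper closes this gap by explicit computation in the descending $p$-central (resp.\ $p^r$-central) series: for part (1) it checks $(ab)^\alpha\equiv a^\alpha b^\alpha \bmod G_3$ using $(ab)^{mp}\equiv a^{mp}b^{mp}[a,b]^{\binom{p}{2}}$ and the vanishing of $p$-th powers of commutators mod $G_3$; for part (2) it exhibits the concrete basis change $x\mapsto xs_1^m$ (with $\alpha=1+mq$, $q=p^r$) and verifies by a commutator calculation that it sends $s_0$ to $y^{-m}s_0^\alpha y^m$ modulo $F_3$. Only after this explicit base case does it invoke the lifting argument from the proof of Theorem 3.3 of \cite{wilkes2020classification} to propagate the congruence to all of $G$ (resp.\ $F$). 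Your proposal is missing exactly this computational core; the citation of a ``refined form'' of Wilkes' classification that ``realizes every element of $\mathbb{U}_p$'' is an assertion of the conclusion rather than a proof of it.
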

\begin{proof}
    For the first case we will use the descending $p$-central series of $G$, defined by $G_1:=G$, and $G_{i+1}:=G^p[G_i,G]$. We will show by induction that if up to a change by conjugate, one has that  $(ab)^\alpha\equiv a^\alpha \alpha$ mod $G_i$, then the same is true mod $G_{i+1}$.
    Since $\alpha\in(1+p\mathbb{Z}_p)$ then mod $G_2$ this is just $(ab)\equiv ab$.
    Now mod $p^2$, $\alpha$ is of the form $1+mp$ for some $m$. 
    From the definition of $G_3$ and the commutator we have that: $(ab)^{mp}\equiv a^{mp}b^{mp}[a,b]^{\binom{p}{2}}$ mod $G_3$.
     Since $p$ powers of a commutator is trivial mod $G_3$, we have that: \[(ab)^\alpha\equiv (ab)(ab)^{mp}\equiv aba^{mp}b^{mp}\]
    and since $p$ powers of elements are in the center of $G/G_3$, then $(ab)^\alpha\equiv a^\alpha b^\alpha$ mod $G_3$. Now that we have the desired property mod $G_3$, by following the argument in the proof of Theorem 3.3 in \cite{wilkes2020classification}, for the case of $n=0,b=2$, we get that up to conjugation $(ab)^\alpha\equiv a^\alpha b^\alpha$ mod $G_i$ for all $i$ and hence also for $G$.

   The second case is just slightly more complicated.
    Let $q=p^r$. To construct the  automorphism, we will look at the descending $q$-central series of $F$.
    This is a filtration 
    \[
    \cdots \subseteq F_3 \subseteq F_2 \subseteq F_1=F 
    \]
    on $F$ defined by the following:\[F_1:=F, \ \ F_{i+1}:=F_i^q[F_i,F].\]
    Since $[F_i,F_j]\subseteq F_{i+j}$ we have that $\gr(F)=\underset{i=0}{\overset{\infty}\bigoplus} F_i/F_{i+1}$ is a graded Lie algebra over $\mathbb{Z}/p\mathbb{Z}$.
    We will construct the automorphism by induction, showing that if we have the automorphism mod $F_i$ then we also have it mod $F_{i+1}$.
    First note that since mod $F_2$,  $s_0=s_1$ and $\alpha\equiv 1$ mod $q$, the desired automorphism mod $F_2$ is just the identity.
    We now move to compute mod $F_3$.
    First note we have that mod $F_3$ the power by $\alpha\equiv1+mq$ where $0\leq m < q$: 
    \begin{equation}
        \begin{split}
    s_0^\alpha& =(s_1 x^{p^r}[x,y])^{1+mq}=s_0(s_1 x^{p^r}[x,y])^{mq}=s_0s_1^{mq}(x^q[x,y])^{mq}[x^q[x,y],s_1]^{\binom{q}{2}} \\ & \equiv s_0s_1^{mq} .  
        \end{split}
    \end{equation}
    On the other hand, the basis change $x\mapsto xs_1^m$ gives us: 
   \begin{equation}
       \begin{split}
   s_0\mapsto & s_1(xs_1^m)^q[xs_1^m,y]\equiv s_1x^qs_1^{qm}[x,s_1]^{\binom{q}{2}}[x,y][s_1^m,y] \\ & \equiv s_1x^qs_1^{mq}[x,y][s_1,y^m] 
    \equiv s_1^{1+mq}x^q[x,y]s_1^{-1}y^{-m}s_1y^m \\ & =s_1^{mq}x^q[x,y]y^{-m}s_1y^m=y^{-m}s_1^{1+mq}x^q[x,y]y^m \\ & =y^{-m}s_0^\alpha y^m.
   \end{split}
   \end{equation}
 
    We used the fact the mod $F_3$ elements commute with $q$ powers and commutators, that $[a^m,b]\equiv[a,b^m]$, $(ab)^q\equiv a^qb^q[a,b]^{\binom{q}{2}}$ and that $p$ is odd.
    Now following the argument in the proof of Theorem 3.3 in \cite{wilkes2020classification}, and noting that all changes to $s_1$ are just by conjugation, we see that we can then lift this to any $F_n$. Thus, we can also lift it to all of $F$ as desired.
\end{proof}

To summarize we have added to the relations in Theorem \ref{Thm:BasicRelations}, the relation which give us the extended Frobenius structure: 
\begin{Thm}\label{Thm:ExtendedRelations} 
The following relations hold in $Cob^2_p$:
\begin{center}
    \begin{tikzpicture}[  every edge/.style={draw,  postaction={nomorepostaction,decorate,  decoration={markings,mark=at position 0.6 with {\arrow{>}}}   }} ]
 \matrix [column sep=0.8mm,row sep=0.8mm]
{
\graph[nodes={circle,fill,inner sep=0.8mm},empty nodes]{1}; & \node {$=$}; &
\graph[ nodes={circle,fill,inner sep=0.8mm},empty nodes]{2--3}; \\
};
\draw (1) node [above = 1.5pt] {$C_{0,1}$};
\draw (2) node[above = 1.5pt]{$C_{0,1}$};
\draw (3) node[above = 1.5pt ]{$C_{1,1}^\alpha$};

\draw (-4,0) node {$\alpha \in\mathbb{U}_p$};
\draw (-6.6,0) node {(R6)};
\end{tikzpicture}
\end{center}
\begin{center}
    \begin{tikzpicture}[  every edge/.style={draw,  postaction={nomorepostaction,decorate,  decoration={markings,mark=at position 0.6 with {\arrow{>}}}   }} ]
 \matrix [column sep=0.8mm,row sep=0.8mm]
{
\graph[nodes={circle,fill,inner sep=0.8mm},empty nodes]{1}; & \node {$=$}; &
\graph[ nodes={circle,fill,inner sep=0.8mm},empty nodes]{2--3}; \\
};
\draw (1) node [above = 1.5pt] {$C_{1,0}$};
\draw (2) node[above = 1.5pt]{$C_{1,1}^\alpha$};
\draw (3) node[above = 1.5pt ]{$C_{1,0}$};

\draw (-4,0) node {$\alpha \in\mathbb{U}_p$};
\draw (-6.6,0) node {(R7)};
\end{tikzpicture}
\end{center}
\begin{center} 
\begin{tikzpicture}[shorten >=1.5pt,  every edge/.style={draw,  postaction={nomorepostaction,decorate,  decoration={markings,mark=at position 0.6 with {\arrow{>}}}   }} ]
 \matrix [column sep=1mm,row sep=1mm]
{
\graph[nodes={circle,fill,inner sep=0.8mm},empty nodes]{1--2,3--2}; & \node {$=$}; &
 \graph[nodes={circle,fill,inner sep=0.8mm},empty nodes]{5--6};\\
};
\draw (1) node[above = 1.5pt]{$C_{1,1}^\alpha$};
\draw (2) node[above = 1.5pt]{$P_{2,1}$};
\draw (3) node[above left = 1pt and -0.3cm]{$C_{1,1}^\alpha$};
\draw (5) node[above = 1.5pt]{$P_{2,1}$};
\draw (6) node[above = 1.5pt]{$C_{1,1}^\alpha$};
\draw (-4,0) node {$\alpha \in\mathbb{U}_p$};
\draw (-6.3,0) node {(R8)};
\end{tikzpicture}
\end{center}
\begin{center}
\begin{tikzpicture}[shorten >=1.5pt,  every edge/.style={draw,  postaction={nomorepostaction,decorate,  decoration={markings,mark=at position 0.6 with {\arrow{>}}}   }} ]
 \matrix [column sep=1mm,row sep=1mm]
{ \graph[nodes={circle,fill,inner sep=0.8mm, branch right},empty nodes]{5--{6}};& \node {$=$}; &
\graph[nodes={circle,fill,inner sep=0.8mm},empty nodes]{1--{2,3}};  \\
};
\draw (1) node[above = 1.5pt]{$P_{1,2}$};
\draw (2) node[above = 1.5pt]{$C_{1,1}^\alpha$};
\draw (3) node[above = 1.5pt]{$C_{1,1}^\alpha$};
\draw (5) node[above = 1.5pt]{$C_{1,1}^\alpha$};
\draw (6) node[above = 1.5pt]{$P_{1,2}$};
\draw (-4,0) node {$\alpha \in\mathbb{U}_p$};
\draw (-6.3,0) node {(R9)};
\end{tikzpicture}
\end{center}
\begin{center}
\begin{tikzpicture}[  every edge/.style={draw,  postaction={nomorepostaction,decorate,  decoration={markings,mark=at position 0.6 with {\arrow{>}}}   }} ]
 \matrix [column sep=0.8mm,row sep=0.8mm]
{
\graph[nodes={circle,fill,inner sep=0.8mm},empty nodes]{1}; & \node {$=$}; &
\graph[no placement, nodes={circle,fill,inner sep=0.8mm},empty nodes]{2[at ={(0,0)}]--3[at ={(1,0.7)}]--5[at={(2,0)}],2--4[at={(1,-0.7)}]--5}; \\
};
\draw (1) node [above = 1.5pt] {$T_{1,1}^r$};
\draw (2) node[above = 1.5pt]{$P_{1,2}$};
\draw (3) node[above = 1.5pt ]{$C_{1,1}^\alpha$};
\draw (4) node[below =1.5pt]{$C_{1,1}^\beta$};
\draw (5) node[above = 1.5pt]{$P_{2,1}$};
\draw (-3.7,0.5) node {$\alpha \beta^{-1}\in(1+p^r\mathbb{Z}_p)$};
\draw (-3.7,-0.5) node {$\alpha \beta^{-1}\not\in(1+p^{r+1}\mathbb{Z}_p)$};
\draw (-6,0) node {(R10)};
\end{tikzpicture}
\end{center}
\begin{center}
\begin{tikzpicture}[  every edge/.style={draw,  postaction={nomorepostaction,decorate,  decoration={markings,mark=at position 0.6 with {\arrow{>}}}   }} ]
 \matrix [column sep=0.8mm,row sep=0.8mm]
{
\graph[nodes={circle,fill,inner sep=0.8mm},empty nodes]{1--2}; & \node {$=$}; &
\graph[nodes={circle,fill,inner sep=0.8mm},empty nodes]{3--4}; \\
};
\draw (1) node[above = 1.5pt]{$T_{1,1}^r$};
\draw (2) node[above = 1.5pt]{$T_{1,1}^{r'}$};
\draw (3) node[above = 1.5pt ]{$T_{1,1}$};
\draw (4) node[above right= 1.5pt and -0.3cm ]{$T_{1,1}^{\min(r,r')}$};

\draw (-5,0) node {(R11)};
\end{tikzpicture}
\end{center}

\begin{center}
    \begin{tikzpicture}[  every edge/.style={draw,  postaction={nomorepostaction,decorate,  decoration={markings,mark=at position 0.6 with {\arrow{>}}}   }} ]
 \matrix [column sep=0.8mm,row sep=0.8mm]
{
\graph[nodes={circle,fill,inner sep=0.8mm},empty nodes]{1}; & \node {$=$}; &
\graph[ nodes={circle,fill,inner sep=0.8mm},empty nodes]{2--3}; \\
};
\draw (1) node [above = 1.5pt] {$T_{1,1}^r$};
\draw (2) node[above = 1.5pt]{$C_{1,1}^\alpha$};
\draw (3) node[above = 1.5pt ]{$T_{1,1}^r$};

\draw (-4,0) node {$\alpha \in(1+p^r\mathbb{Z}_p)$};
\draw (-6.6,0) node {(R12)};
\end{tikzpicture}
\end{center}
\end{Thm}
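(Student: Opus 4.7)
The plan is to verify each relation by exhibiting an isomorphism between the two cospans of pro-$p$ groups that result from the two sides of the equality, invoking the rigidity given by Lemma \ref{Lem:invariants} and the auxiliary propositions already proven. Since every cobordism in $\Cob_p^2$ is a cospan whose identification is detected by an isomorphism of apex groups compatible (up to conjugation) with the boundary maps, each of (R6)--(R12) reduces to finding such an isomorphism.

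For (R6) and (R7), the argument is purely formal from the cospan viewpoint of Remark \ref{Rem:gluinginstages}. Composing $C_{0,1}$ with $C_{1,1}^{\alpha}$ on the out-boundary is the pushout of $\mathbb{Z}_p \xrightarrow{\text{trivial}} \{1\}$ against $\mathbb{Z}_p \xrightarrow{f_\alpha} \mathbb{Z}_p$; since $\alpha \in \mathbb{U}_p$ is a unit, this pushout is again $\{1\}$, and the outgoing $\mathbb{Z}_p$ still maps trivially to the apex, so we recover $C_{0,1}$ exactly. The computation for (R7) is symmetric (and the cup argument is identical).

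Relations (R8) and (R9) are where the structural input is needed. Composing the twists $C_{1,1}^{\alpha}$ on both inputs of $P_{2,1}$ produces the cospan $\mathbb{Z}_p \coprod \mathbb{Z}_p \to \langle a,b\rangle \leftarrow \mathbb{Z}_p$ with boundary maps $1\mapsto a^{\alpha}$, $1 \mapsto b^{\alpha}$, and $1 \mapsto ab$, while twisting the output of $P_{2,1}$ by $C_{1,1}^{\alpha}$ produces the cospan with maps $1\mapsto a$, $1 \mapsto b$, and $1 \mapsto (ab)^{\alpha^{-1}}$ (the pushout along $f_\alpha$ remains $\langle a,b\rangle$ since the stable letter equals $(ab)^{\alpha^{-1}}$). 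An isomorphism of cospans between these is exactly an automorphism of $\langle a,b\rangle$ carrying $a$ to a conjugate of $a^{\alpha}$, $b$ to a conjugate of $b^{\alpha}$, and $ab$ to a conjugate of $(ab)^{\alpha}$, which is supplied directly by Proposition \ref{Prop:mobius}(1). Relation (R9) is obtained by reversing all arrows, applied to $P_{1,2}$. Relation (R12) follows the same template: composition of $T^{r}_{1,1}$ with $C_{1,1}^{\alpha}$ on the out-boundary rewrites the outgoing generator as a $1/\alpha$-power, and when $\alpha \equiv 1 \pmod{p^{r}}$ the automorphism of $\langle x,y,s_1\rangle$ constructed in Proposition \ref{Prop:mobius}(2) sends $s_1$ to a conjugate of itself and $s_0$ to a conjugate of $s_0^{\alpha}$, providing the needed cospan isomorphism.

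Finally, (R10) is a restatement of Proposition \ref{Prop:KleinRelation}(2): the specified non-trivial gluing of $P_{1,2}$ to $P_{2,1}$ along twists of levels detected by $\alpha\beta^{-1}$ is computed there to be $T^{r}_{1,1}$. Similarly, (R11) is Proposition \ref{Prop:KleinRelation}(1) on the orientability-level arithmetic for amalgams of two tori. The main technical load of the theorem is therefore concentrated in (R8), (R9), (R12), where the subtlety is that the relevant automorphism of a free pro-$p$ group is not given by any naive formula; the necessary inductive construction through the descending $q$-central series has already been carried out in Proposition \ref{Prop:mobius}, so the present proof only needs to package these automorphisms as equalities of cospans and organize the pushout bookkeeping.
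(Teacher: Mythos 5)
Your proposal is correct and follows essentially the same route as the paper: (R6)--(R7) from the definitions, (R10)--(R11) as direct citations of Proposition \ref{Prop:KleinRelation}, and (R8), (R9), (R12) by packaging the automorphisms of Proposition \ref{Prop:mobius} as isomorphisms of group triples/cospans. The extra pushout bookkeeping you spell out for the cospan composition is consistent with (and slightly more explicit than) the paper's argument.
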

\begin{proof}
    Relations (R6),(R7) are immediate from the definition of $C_{0,1},C_{1,0}$ and the definition of isomorphism of a group triple.
    One directly gets (R10) and (R11) from Proposition \ref{Prop:KleinRelation}.
    To get relations (R12), we apply the second part of Proposition \ref{Prop:mobius}. This gives us that for any $\alpha\equiv 1\pmod{p^{r}}$ there exists an automorphism \[
\psi:T^r_{1,1}\rightarrow T^r_{1,1}
\] of the group triple \[T^r_{1,1}:=(\{<s_1,x,y>,\{<s_1>\},\{<s_0>\})\] with $s_0:=s_1x^{p^r}[x,y]$ so that $\psi$ restricts on the boundaries to $s_1\mapsto s_1$, and $s_0\mapsto s_0^\alpha$ (up to conjugation). This exactly means that applying $C_{1,1}^\alpha$ to the out boundary of $T_{1,1}^r$ is equal to simply $T_{1,1}^r$
    In a similar way, by the first part of Proposition \ref{Prop:mobius}, the automorphism of the free pro-p group on two generators $\langle a,b\rangle$ which maps $a\mapsto a^\alpha$ $b\mapsto b^\alpha$, gives automorphisms of the group triples $P_{1,2}$, and $P_{2,1}$ which twists each boundary by $\alpha$, which is exactly (R8) and (R9).
\end{proof}
\begin{Rem}
    We can summarize the above theorem in terms of group theoretic gluing of cobordisms as follows:
    \begin{enumerate}
    
    \item
        For any $\alpha\in\mathbb{U}_p$
        \[C^{\alpha}_{1,1}  \circ C_{0,1} \cong C_{0,1}.\]
    \item
        For any $\alpha\in\mathbb{U}_p$
        \[ C_{1,0} \circ C^{\alpha}_{1,1}  \cong C_{1,0}.\]
        
     \item
        For any $\alpha\in\mathbb{U}_p$
        \[P_{2,1} \circ (C^{\alpha}_{1,1} \coprod C^{\alpha}_{1,1} )\cong C^{\alpha}_{1,1}  \circ P_{2,1}.\]

    \item For any $\alpha\in\mathbb{U}_p$ \[
        P_{1,2} \circ C^{\alpha}_{1,1}  \cong  (C^{\alpha}_{1,1} 
         \coprod C^{\alpha}_{1,1} ) \circ P_{1,2}.\]
    
    \item For $\alpha,\beta\in\mathbb{U}_p$, such that $\alpha \beta^{-1}\in (1+p^r\mathbb{Z}_p)$ but $\alpha \beta^{-1}\not\in (1+p^{r+1}\mathbb{Z}_p)$   \[
    T^r_{1,1} \cong (\langle a,b \rangle ; \{\langle ab \rangle\},\{\langle a \rangle,\langle b \rangle\}) \underset{\underset{b \mapsto d^\beta}{a\mapsto c^\alpha} }\circ (\langle c,d \rangle ; \{\langle c \rangle,\langle d \rangle\}, \{\langle cd \rangle \}. \]
    
      \item   One has that  \[ T^r_{1,1}\circ T^{r'}_{1,1}\cong T_{1,1} \circ T^{\min(r',r)}_{1,1}.  \]
    where the gluing is simply by the identity on the copies of $\mathbb{Z}_p$ in the boundaries in standard presentations. 
    
    \item For any $\alpha\in\mathbb{U}_p$ such that $\alpha\in(1+p^r\mathbb{Z}_p)$ \[C^{\alpha}_{1,1} \circ T_{1,1}^r  \cong T_{1,1}^r.\]
    
 \end{enumerate}
\end{Rem}

\begin{Thm}\label{Thm:allRel}
    The relations of Theorem \ref{Thm:BasicRelations}, Theorem (so all together (R1)-(R12)) and (RS) are all the relations of $\Cob_p^2$.
\end{Thm}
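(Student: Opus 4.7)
The plan is to establish a normal form for every morphism of $\Cob_p^2$, determined by the invariants $(g_k,r_k,n_k,u_k)$ on connected components and the boundary twist/labelling data, to show that any composition of the generators from Theorem~\ref{Thm:generators} can be rewritten into its normal form using only (R1)--(R12) and (RS), and then to invoke Lemma~\ref{Lem:invariants}, which guarantees that two normal forms with the same data represent the same morphism. Since Theorem~\ref{Thm:generators} already provides the generating set, completeness of the relations amounts to uniqueness of this normal form.

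First I would apply (RS) together with the standard naturality/symmetry manipulations for the swap to push all $S$'s to the extremes of any word in the generators, thereby decoupling the problem into one connected cobordism at a time. For a fixed connected cobordism, (R8) and (R9) let me push every $C_{1,1}^{\alpha}$ through any $P_{2,1}$ or $P_{1,2}$ it encounters, combining the $\alpha$'s multiplicatively; (R6) and (R7) absorb any $C_{1,1}^{\alpha}$ landing next to a cap $C_{0,1}$ or cup $C_{1,0}$; and (R12) absorbs twists of sufficiently large level into an adjacent $T_{1,1}^{r}$. After these moves all remaining $C_{1,1}^{\alpha}$ sit on the free in- and out-boundaries.

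Next I would clean up the torus content. Relation (R10) allows me to trade any twisted gluing of $P_{1,2}$ with $P_{2,1}$ for a single $T_{1,1}^{r}$ of the appropriate level (and conversely). Iterated use of (R11) then reduces any chain $T_{1,1}^{r_{1}} \circ \cdots \circ T_{1,1}^{r_{g}}$ to $g-1$ copies of $T_{1,1}$ followed by a single $T_{1,1}^{\min r_{i}}$, matching the canonical construction of Theorem~\ref{Thm:generators}. What is left is a morphism involving only the orientable Frobenius generators $P_{2,1}, P_{1,2}, C_{0,1}, C_{1,0}, C_{1,1}$ together with copies of $T_{1,1}$, a single $T_{1,1}^{r}$, and boundary twists. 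Relations (R1)--(R5) are exactly the presentation of a commutative Frobenius algebra, and by the classical presentation theorem for two-dimensional oriented cobordisms (see Section~3 of \cite{kock2004frobenius}) these suffice to bring the orientable part to the canonical ``fan-in / genus-bumps / fan-out'' shape used in Theorem~\ref{Thm:generators}, which depends only on $g,n,u$.

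The main obstacle is controlling the interaction of the torus-normalisation moves (R10), (R11) with the twist-absorption moves (R6)--(R9), (R12). An arbitrary word in the generators can interleave pair-of-pants gluings, tori of various $r$-levels, and twists $C_{1,1}^{\alpha}$ in intricate ways, and one must show that every such interleaving can be untangled into the canonical form by an explicit induction (on, say, the pair (number of tori, number of pair-of-pants pieces), lexicographically) without disturbing parts that have already been normalised. The subtle point is tracking the orientation level: when twists of varying $\alpha$'s are pushed past or absorbed into tori of varying $r_i$, the resulting level must be $\min r_i$ in a way that is independent of the order of operations, which is precisely what (R11) and (R12) ensure. Once this induction is carried out, every morphism admits a unique normal form encoding exactly $(g,r,n,u)$ on each component together with a boundary permutation and boundary twists, and Lemma~\ref{Lem:invariants} together with the definition of isomorphism of cobordisms identifies normal forms with equal data, completing the proof.
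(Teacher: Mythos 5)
Your proposal is correct and follows essentially the same route as the paper: both arguments adapt Kock's normal-form procedure for oriented $2$-cobordisms, use (R6)--(R9) and (R12) to push the twists $C_{1,1}^{\alpha}$ to the boundaries or absorb them, use (R10) and (R11) to normalise the torus content to $(g-1)$ copies of $T_{1,1}$ followed by a single $T_{1,1}^{r}$, handle the swaps via (RS) as in Kock, and conclude by the classification of cobordisms by their invariants.
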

\begin{proof}
    First, if a relation does not involve any non orientable things (i.e. no $T_{1,1}^r$ or $C_{1,1}^\alpha$), then the same argument as in \cite{kock2004frobenius} 1.4.36 and 1.4.37 will bring it to a normal form.
    Now given a relation for some connected cobordism with invariant $(g,r,n,u)$ that does have some $C_{1,1}^\alpha$ but has no switch $S$.
    We can use use a similar process as done by Kock of moving to normal form, which will result in a relation of the following form:\\
    Some collection of $C_{1,1}^{\alpha_i}$ $1\leq i\leq n$, followed by the normal form cobordism $(0,0,n,1)$ from Theorem \ref{Thm:generators}, followed by $g$ successive gluings of
    \begin{center}
\begin{tikzpicture}[  every edge/.style={draw,  postaction={nomorepostaction,decorate,  decoration={markings,mark=at position 0.6 with {\arrow{>}}}   }} ]

\graph[no placement, nodes={circle,fill,inner sep=0.8mm},empty nodes]{2[at ={(0,0)}]--3[at ={(1,0.7)}]--5[at={(2,0)}],2--4[at={(1,-0.7)}]--5}; 

\draw (2) node[above = 1.5pt]{$P_{1,2}$};
\draw (3) node[above = 1.5pt ]{$C_{1,1}^{\beta_i}$};
\draw (4) node[below =1.5pt]{$C_{1,1}$};
\draw (5) node[above = 1.5pt]{$P_{2,1}$};
\end{tikzpicture}
\end{center}
for some $\beta_j$, $1\leq j\leq g$,
and then the normal form cobordism $(0,0,1,u)$ of Theorem  \ref{Thm:generators} and then some collection of $C_{1,1}^{\gamma_k}$, $1\leq k\leq u$. Using relations (R10) and (R11) we get that the middle part can be moved to the form of $(g-1)$ gluings of $T_{1,1}$, followed by a single $T_{1,1}^r$. We have arrived at the normal form of a general cobordism. 
To complete the proof one would like to deal with non connected cobordisms, and relations which involve a switch $S$, the elimination of the $S$ when moving to a normal form is done exactly as in \cite{kock2004frobenius} 1.4.39 and 1.4.40, using the relations (RS), and also (R8), (R9) when moving by a $C_{1,1}^\alpha$.
We conclude that the relations we listed are indeed a sufficient list.
\end{proof}
\section{Equivalence of categories}\label{sect:Equiv}
We are now ready to prove the main theorem of this section 
\begin{proof}[Proof of Theorem \ref{Thm:Main}]
First, given a R-TQFT $(A, \zeta)$ over $p$, we will define a $\mathbb{U}_p$-extended Frobenius algebra in a functorial way.
The module $A$ will be $A_{\mathbb{Z}_p}$.
The cobordisms $C_{1,0}$ and $C_{0,1}$ give us maps $\epsilon: A_{\mathbb{Z}_p}\rightarrow A_\emptyset=R$ and  $\iota: R=A_\emptyset\rightarrow A_{\mathbb{Z}_p}$ resp. which are the unit and co-unit of the Frobenius algebra.
Given a connected $(1+1)$  cobordism $(G,N,U)$ and a choice of standard form basis with $N_i=<s_i>,$ $U_j=<s_j>$,  we have a canonical way to identify $A_N$ to $\underset{i=1, \dots, n}\bigotimes A_{\mathbb{Z}_{p}}$ via isomorphisms $\psi_i:<s_i>\rightarrow \mathbb{Z}_p$ sending generator $s_i$  to $1\in\mathbb{Z}_p$ and similarly for $A_U$. 
So using $\zeta(G):A_N \to A_U$ and the compositions $(\psi_j)_{\#} \circ \zeta(G) \circ (\psi^{-1}_i)_{\#}$ we get maps \[\underset{i=1, \dots, n}\bigotimes A_{\mathbb{Z}_{p}}\rightarrow \underset{j=1, \dots, u}\bigotimes A_{\mathbb{Z}_{p}}. \]

Axiom (T4) in the definition of a TQFT, gives us that these maps are independent of the choice of basis.
We thus have from $P_{2,1}$ the map $m$ and from $P_{1,2}$  the map $\Delta$.
 The relations in (RS) is exactly saying that the symmetric monoidal structure on $Cob^2_p$ is compatible with the one of $Proj(R)$.
By relations (R1) to (R5) in Theorem \ref{Thm:BasicRelations} and (RS) we get that the above is a Frobenius algebra (i.e. satisfies (F1)-(F5)+(FS)).

We now want to also check the $\mathbb{U}_p$-extended structure.\\
As explained in Remark \ref{Rem:twist}, for any $\alpha \in\mathbb{U}_p$ we have the isomorphism $f_\alpha :\mathbb{Z}_{p}\rightarrow \mathbb{Z}_{p}$ given by the generator $1$ to the generator $\alpha$ which defines
\[\phi_\alpha:=(f_\alpha)_{\#}:A_{\mathbb{Z}_{p}}\rightarrow A_{\mathbb{Z}_{p}}.\]

By \ref{Thm:ExtendedRelations} (R6),(R7) and axiom (T4) we have that $\phi_\alpha\circ \iota=\iota$ and $\epsilon\circ\phi_\alpha=\epsilon$ (which correspond to (F6),(F7)).

From relations (R8) and axiom (T4) the following commutative diagram:
\[
\xymatrix@C=7pc{A_{\langle a \rangle}\otimes_R A_{\langle b \rangle}\ar[d]_{( \ )^\alpha_{\#}\otimes_R( \ )^\alpha_{\#}}\ar[r]^{\zeta(P_{2,1})} &A_{\langle ab \rangle}\ar[d]^{( \ )^\alpha_{\#}} \\
A_{\langle a \rangle}\otimes_R A_{\langle b \rangle}\ar[r]_{\zeta(P_{2,1})} &  A_{\langle ab \rangle}}
\]
and therefore
$\phi_\alpha\circ m=m\circ (\phi_\alpha\otimes \phi_\alpha)$ which is axiom (F8) in the definition of an $\mathbb{U}_p$-extended Frobenius algebra and similarly from relation (R9) one has
\[
\xymatrix@C=7pc{A_{\langle ab \rangle}\ar[d]_{( \ )^\alpha_{\#}}\ar[r]^{\zeta(P_{1,2})} & A_{\langle a \rangle}\otimes_R A_{\langle b \rangle}\ar[d]^{( \ )^\alpha_{\#}\otimes_R( \ )^\alpha_{\#}}\\
A_{\langle ab \rangle}\ar[r]_{\zeta(P_{1,2})} & A_{\langle a \rangle}\otimes_R A_{\langle b \rangle}}
\]
and so 
$(\phi_\alpha\otimes \phi_\alpha)\circ  \Delta=\Delta \circ \phi_\alpha$ which is axiom (F9) in the definition of an $\mathbb{U}_p$-extended Frobenius algebra.
Using (T7), and Since $m\circ\Delta$ corresponds to a twice punctured torus we get that from (R10) of Theorem \ref{Thm:ExtendedRelations} correspond to relation (F10) of Frobenius algebras: 
\[m \circ (\phi_\alpha\otimes\phi_\beta) \circ \Delta \circ \iota =\kappa_r,\] where $r$ the highest power of $p$  dividing $\alpha \beta^{-1} -1$.
    Similarly, one gets from axiom (T7), and from (R11) of Theorem \ref{Thm:ExtendedRelations} axiom (F11) of Frobenius algebras:\[m \circ (\kappa_r \otimes \kappa_{r'})= m \circ \Delta \circ \kappa_{\min(r,r')}.\] 

We are left with showing axiom (F12). First note that (R12) and (T4) give us:
\[
\xymatrix{A_{\langle s_1 \rangle}\ar[d]_{\text{id}_{\#}}\ar[r]^{\zeta(T_{1,1}^r)} & A_{\langle s_0 \rangle}\ar[d]^{( \ )^\alpha_{\#}}\\
A_{\langle s_1 \rangle}\ar[r]_{\zeta(T_{1,1}^r)} & A_{\langle s_0 \rangle}}
\]
 Since the map on $A$ coming from $\zeta(T_{1,1}^r)$ is exactly the one sending $v$ to $m\circ (\kappa_r\otimes v)$, we get that
 \[\phi_\alpha \circ m\circ (\kappa_r\otimes v) =m\circ (\kappa_r\otimes v) \]
 which is (F12).
 
And so we get that any $R$-TQFT over $p$, defines a $\mathbb{U}_p$-extended Frobenius algebra in a functorial way.

Now given a $\mathbb{U}_p$-extended Frobenius algebra $(V,\phi_\alpha,\kappa_r)$ we assign to a one dimensional connected object $M$
\[
A_M=\{(\psi,v) \ | \ \psi:M\rightarrow \mathbb{Z}_p, \ v\in V\}/\sim
\]
where $(\psi_1,v_1)\sim (\psi_2,v_2)$ if $\alpha\circ\psi_1=\psi_2$ and $\phi_\alpha(v_1)=v_2$ for some $\alpha\in \Aut(\mathbb{Z}_p)$.
For a family of $PD^1$ groups one simply takes the tensor product of these vector spaces.
\\
Let FG be the full subcategory of groupoids of the form $\underset{1,2,\dots, n}\coprod B\mathbb{Z}_{p}$, for $n=0,1,\dots$. To any groupoid $\mathfrak{G}$ equivalent to a groupoid of this form, assign the $R$-module of all triples \[\{\psi:\mathfrak{G} \to \mathfrak{F}, \mathfrak{F} \in FG, \zeta \in V^{\otimes_{R} |\pi_{0}(\mathfrak{G})|} \ | \ \psi \ \text{is an equivalence of categories}\}/\sim.\] 

Now similar to the classical setting (For example in \cite{kock2004frobenius}), we define the value of the TQFT on the generators of $\Cob^2_p$ (i.e. on a pro-p $PD^2$ triple or an isomorphism of $PD^1$ groups) to be as follows:
\begin{center}
\begin{tabular}{|c|c|}
\hline 
$\Cob_{p}^n$ generator & $\mathbb{U}_{p}$-extended Frobenius algebras\tabularnewline
\hline 
\hline 
$P_{2,1}$ & $m$\tabularnewline
\hline 
$P_{1,2}$ & $\Delta$\tabularnewline
\hline 
$C_{1,0}$ & $\epsilon$\tabularnewline
\hline 
$C_{0,1}$ & $\iota$\tabularnewline
\hline 
$C_{1,1}^{\alpha}$ & $\phi_{\alpha}$\tabularnewline
\hline 
$S$ & $\sigma$ \tabularnewline
\hline 
\end{tabular}
\end{center}

Given a cobordism, we can decompose it into the generators of Theorem \ref{Thm:generators}. By definition of a $\mathbb{U}_p$-extended Frobenius algebra over $R$, all the relations of Theorem \ref{Thm:allRel} are satisfied and so we get a well a symmetric monoidal functor from $\Cob_p^n$ to $\Proj(R)$. 

It is easy to see that applying the previous functor to the resulting TQFT, gives us an equivalent $\mathbb{U}_p$-extended Frobenius algebra over $R$.
\end{proof}
\newpage
\section{Pro-\texorpdfstring{$p$}{p} Dijkgraaf-Witten theory}\label{sect:DW}
\subsection{Defining the TQFT}\label{subsect:DefDW}
In this section we will construct a TQFT of dimension $1+1$, which is an analogue/generalization of Dijkgraaf-Witten theory (which can be thought of as a Chern-Simons theory with a finite gauge group). As explained in \cite{freed1993lectures, freed1993chern}, given a finite group $\Theta$, the Dijkgraaf-Witten theory is defined as follows:

First one has that the space of fields on a manifold $X$ is $\text{Prin}_\Theta(X)$ the space of principal $\Theta$-bundles on $X$. One can take the trivial action, and the mass of (or measure on) a field $P\in\text{Prin}_\Theta(X)$ to be $\mu_X(P)=\frac{1}{|\Aut(P)|}$. \\
The TQFT associates to a 1-dimensional manifold $Y$ the space $L^2(\overline{\text{Prin}_\Theta(Y)},\mu_Y)$, where $\overline{\text{Prin}_\Theta(Y)}$ is the set of equivalence classes of principal $\Theta$-bundles on $Y$.
For a 2-manifold $X$ with boundary $Y$, the TQFT takes $Q$ to the volume of $\text{Prin}_\Theta(X)(Q)$, the set of $\Theta$-bundles on $X$ whose restriction to $Y$ is $Q$, up to isomorphisms of bundles, which are the identity on $Y$.
\begin{Rem}
    More generally one can define such a theory for any cohomology class $\alpha\in H^{n+1}(B\Theta,\mathbb{C}^\times)$, by associating to any $n$-dimensional manifold $Y$ the global sections of a certain complex line bundle defined by $\alpha$.
\end{Rem}
For a connected manifold $M$, there a natural group theoretic identification of the set of equivalence classes of principal $\Theta$ bundles on $M$, namely:
\[\text{Prin}_\Theta (M)\cong \Hom(\pi_1(M),\Theta)/\sim\]
where the equivalence is by the conjugation action of $G$.
One then has that for $\alpha=0$ the space of global sections is the $\mathbb{C}$-vector space generated by the $\Hom(\pi_1(M),\Theta)/\sim$. This is finite since $\Theta$ is finite, and $\pi_1(M)$ is finitely generated.

This leads us to make the following definition of a pro-$p$ Dijkgraaf-Witten theory.
\begin{Def}\label{Def:DWTheory}
Fix $\Gamma$ a finite $p$ group.
Define the $(1+1)$ dimensional pro-$p$ Dijkgraaf-Witten theory with gauge group $\Gamma$ to be the following TQFT:
\begin{itemize}
    \item 
    For a finite collection of pro-$p$ $\PD^1$ groups $\mathcal{N}=\{N_i\}_{i\in I}$ we assign $Z(\mathcal{N})$ the $\mathbb{C}$ vector space of maps into $\mathbb{C}$, from the set $\underset{i\in I}\coprod\Hom(N_i,\Gamma)/\sim$.
    \item For a pro-$p$ 2-cobordism $(G,\mathcal{N}_I,\mathcal{U}_J, \phi_I,\tau_J)$ define the morphism associated to it to be 
    \[Z(G):Z(\mathcal{N})\mapsto Z(\mathcal{U})\]
    \[Z(G)(f)(p_2)=\sum_{p_1\in \Hom(\mathcal{N},\Gamma)/\sim} \ \sum_{p\in P_G(p_1,p_2)}\frac{|\Aut(p_2)|}{|\Aut(p)|}f(p_1)\]
    where \[P_G(p_1,p_2)=\{\Hom(G,\Gamma)/\sim \ | \ p_1=p\circ\phi_I, \ p_2=p\circ\tau_J\},\] $f\in\text{Map}(\Hom(\mathcal{N},\mathbb{C})$ and $\Aut(p)$ is the stabilizer under conjugation by $\Gamma$ of a representative homomorphism of $p$ (i.e. the size of the centralizer in $\Gamma$ of the homomorphism).
   \end{itemize}
\end{Def}
\begin{Rem}
    Since we are working with categories of pro-$p$ groups and all amalgamations are in such categories, we need $\Gamma$ to be a $p$-group for this to have a chance to define a TQFT.
\end{Rem}

In order to see more easily that the construction of a similar picture in our case is functorial, we will turn to look at the above in the language of cospans and groupoids. 
This way of rephrasing, also allows to establish a future connection to categories of Calabi-Yau cospans, a concept developed in \cite{bozec2024topologicalfieldtheoriesassociated}, \cite{MR4319769}, \cite{gorsky2024countingcalabiyaucategoriesapplications} as well as in \cite{MR3678002} and \cite{MR3381472}. 
We will use a sequence of symmetric monoidal functors 
\[
\Cob_p^2 \longrightarrow \Cospan(\pGrpd) \longrightarrow \Span(\fGrpd) \longrightarrow \Proj(\mathbb{C})
\]
relating the structures 
\[
\coprod \longrightarrow \coprod \longrightarrow \prod \longrightarrow \otimes_{\mathbb{C}}.
\]
The functor 
\[
\Cospan(\pGrpd) \longrightarrow \Span(\fGrpd)
\]
is given by $\Fun(-, B\Gamma)$.
Any morphism 
\[
\mathfrak{G}_1 \longrightarrow \mathfrak{G}_{12} \longleftarrow \mathfrak{G}_2
\]
in the cospan category $\Cospan(\pGrpd)$ gives rise to the span diagram 
\begin{equation}\label{eqn:SpanDiag}
\Fun(\mathfrak{G}_1, B\Gamma) \stackrel{r_1}\longleftarrow \Fun(\mathfrak{G}_{12}, B\Gamma) \stackrel{r_2}\longrightarrow \Fun(\mathfrak{G}_2, B\Gamma)
\end{equation}
of finite groupoids which we think of as a morphism from $\Fun(\mathfrak{G}_1, B\Gamma)$ to $\Fun(\mathfrak{G}_2, B\Gamma)$. Such a diagram can be composed with 
\begin{equation*}\label{eqn:FunDiag1}
\Fun(\mathfrak{G}_2, B\Gamma) \stackrel{s_2}\longleftarrow \Fun(\mathfrak{G}_{23}, B\Gamma) \stackrel{s_3}\longrightarrow \Fun(\mathfrak{G}_3, B\Gamma)
\end{equation*}
thought of as a morphism from $\Fun(\mathfrak{G}_2, B\Gamma)$ to $\Fun(\mathfrak{G}_3, B\Gamma)$. 
Indeed if we define 
\[\mathfrak{G}_{13}= \mathfrak{G}_{12} \underset{\mathfrak{G}_2}\coprod \mathfrak{G}_{23}
\]
to give 
using \[\Fun(\mathfrak{G}_{13}, B\Gamma) \cong \Fun(\mathfrak{G}_{12}, B\Gamma) \times_{\Fun(\mathfrak{G}_2, B\Gamma)}  \Fun(\mathfrak{G}_{23}, B\Gamma)\]
the following composition of spans of finite groupoids where the square is a pullback:
\begin{equation}\label{eqn:pullback}
\xymatrix@C=.57em{
 & & \Fun(\mathfrak{G}_{13}, B\Gamma) \ar[dr]^{\pi_3} \ar[dl]_{\pi_1}& & \\
 & \Fun(\mathfrak{G}_{12}, B\Gamma) \ar[dr]^{r_2} \ar[dl]_{r_1} & & \Fun(\mathfrak{G}_{23}, B\Gamma) \ar[dr]^{s_3} \ar[dl]_{s_2} & \\ 
\Fun(\mathfrak{G}_1, B\Gamma) & & \Fun(\mathfrak{G}_2, B\Gamma) & & \Fun(\mathfrak{G}_3, B\Gamma) 
}
\end{equation}
\[
\Fun(\mathfrak{G}_1, B\Gamma) \stackrel{r_1 \circ \pi_1}\longleftarrow \Fun(\mathfrak{G}_{13}, B\Gamma) \stackrel{s_3 \circ \pi_3}\longrightarrow \Fun(\mathfrak{G}_3, B\Gamma).
\]
When working with connected cobordisms, for which the incoming and outgoing boundaries are subgroups, this simplifies to saying that:
Let $G_{12}$ have boundary subgroups $G_1, G_2$, and $G_{23}$ have boundary subgroups $G_2, G_3$ (where we identify the two copies of the $G_2$), denote by $G_{13}$ the amalgamation of $G_{12}$ and $G_{23}$ over $G_2$, then:
\[\Hom(G_{13},\Gamma)\cong\{(\varphi_1,\varphi_2)\ | \ \begin{array}{c}
\varphi_1\in\Hom(G_{12},\Gamma)\\
\varphi_2\in\Hom(G_{23},\Gamma)\\
\end{array}, \varphi_1(g_2)=\varphi_2(g_2)  \ \forall g_2 \in G_2 \}.\]
For simplicity we use the notation 
$P_{\Gamma}(\mathfrak{G})=\Fun(\mathfrak{G}, B\Gamma)$ for the groupoid of functors, and call them the principal $\Gamma$ bundles over $\mathfrak{G}$.
We now consider functions on the set of equivalence classes on these groupoids. 
When $\mathfrak{G}=B\mathbb{Z}_p$, since $\Gamma$ is a $p$-group we have 
\[
P_{\Gamma}(\mathfrak{G})= 
P_{\Gamma}(B\mathbb{Z}_p) \cong[\Hom(\mathbb{Z}_p, \Gamma) / \Gamma]=[\Gamma / \Gamma]
\]
and therefore 
\[\pi_0(P_{\Gamma}(B\mathbb{Z}_p))=\Gamma/\sim,\] the set of conjugacy classes of $\Gamma$.

For any set $S$ we will write $\mathcal{O}(S)$ for the complex vector space of functions from $S$ to $\mathbb{C}$. For any groupoid $\mathfrak{G}$ we write $\mathcal{O}(\mathfrak{G})$ to mean $\mathcal{O}(\pi_0(\mathfrak{G}))$.
We thus have for $\mathfrak{G}=B\mathbb{Z}_p$:
\[
Z(B\mathbb{Z}_p)=\mathcal{O}(\Gamma/\sim) \cong \mathcal{O}(\Gamma)^\Gamma\cong Z(\mathbb{C}\Gamma).
\]
the center of the group algebra of $\Gamma$.

For any two groupoids $\mathfrak{G}$ and $\mathfrak{H}$, denote by $\mu_{\mathfrak{G}, \mathfrak{H}}$ the obvious isomorphism 
\[
\mathcal{O}(P_{\Gamma}(\mathfrak{G}))\otimes_{\mathbb{C}} \mathcal{O}(P_{\Gamma}(\mathfrak{G})) \longrightarrow \mathcal{O}(P_{\Gamma}(\mathfrak{G} \coprod \mathfrak{H}))
\]
We will construct for any cospan 
\begin{equation*}\label{eqnfirstcospan}
\mathfrak{G}_1 \stackrel{R_1}\longleftarrow \mathfrak{G}_{12} \stackrel{R_2}\longrightarrow \mathfrak{G}_2
\end{equation*}
maps of the form
\begin{equation}\label{eqn:lin}
\mathcal{O}(P_{\Gamma}(\mathfrak{G}_1))\stackrel{r_{1}^*}\longrightarrow \mathcal{O}(P_{\Gamma}(\mathfrak{G}_{12}) \stackrel{r_{2*}}\longrightarrow \mathcal{O}(P_{\Gamma}(\mathfrak{G}_2)).
\end{equation}

We now define the maps $r_1^*$ and  $r_{2*}$ from Equation \ref{eqn:lin}. The map $r_1^*$ just pulls back functions via the maps of sets given by applying $\pi_0$ to Equation \ref{eqn:SpanDiag}. More explicitly, for a subgroup $G_{1} \subseteq G_{12}$ this is the pullback (of the pullback) on functions on $\Gamma$-conjugacy classes of group homomorphisms

\[\mathcal{O}(\Hom(G_{1}, \Gamma)/\sim) \longrightarrow \mathcal{O}(\Hom(G_{12}, \Gamma)/\sim).
\]
 The second map $r_{2*}$ sends $F \in \mathcal{O}(P_{\Gamma}(\mathfrak{G}_{12}))$ to the function in $\mathcal{O}(P_{\Gamma}(\mathfrak{G}_{2}))$ mapping $P \in \pi_0(P_{\Gamma}(\mathfrak{G}_{2}))$ to \[\underset{\{ \ Q \in \pi_0(P_{\Gamma}(\mathfrak{G}_{12})) \ | \ \pi_0(r_2)(Q) = P \ \}}\sum \frac{|\Aut(P)|}{|\Aut(Q)|} F(Q).\]

The composition 
\[Z(\mathfrak{G}_{12})= r_{2*} \circ r_1^*\] of these maps sends $F \in \mathcal{O}(P_{\Gamma}(\mathfrak{G}_1))$ to the function in $\mathcal{O}(P_{\Gamma}(\mathfrak{G}_2))$ mapping $P \in \pi_0(P_{\Gamma}(\mathfrak{G}_2))$ to
\[
\underset{  \{ \ Q \in \pi_0(P_{\Gamma}(\mathfrak{G}_{12}))\  | \ \pi_0(r_2)(Q) = P \ \}}\sum \frac{|\Aut(P)|}{|\Aut(Q)|} F(\pi_0(r_1)(Q)).
\]
The $\mathbb{C}-$linear map 
\[\mathcal{O}(P_{\Gamma}(\mathfrak{G}_1)) \stackrel{Z(\mathfrak{G}_{12})}\longrightarrow \mathcal{O}(P_{\Gamma}(\mathfrak{G}_2))
\]
can also be expressed as a double sum:
\[
\underset{R \in \pi_0(P_{\Gamma}(\mathfrak{G}_1))}\sum \ 
\underset{\{\ Q \in \pi_0(P_{\Gamma}(\mathfrak{G}_{12}))\ | \ \pi_0(r_2)(Q) = P \ , \ \pi_0(r_1)(Q) = R \ \}}\sum \frac{|\Aut(P)|}{|\Aut(Q)|} F(R).
\]
Given another cospan 
\begin{equation*}\label{eqnsecondcospan}
\mathfrak{H}_1 \stackrel{S_1}\longleftarrow \mathfrak{H}_{12} \stackrel{S_2}\longrightarrow \mathfrak{H}_2,
\end{equation*}
the disjoint union cospan 
\begin{equation*}\label{eqndisuncospan}
\mathfrak{G}_1 \coprod \mathfrak{H}_1 \stackrel{R_1 \coprod S_1}\longleftarrow \mathfrak{G}_{12} \coprod \mathfrak{H}_{12} \stackrel{R_2 \coprod S_2}\longrightarrow \mathfrak{G}_2 \coprod \mathfrak{H}_2,
\end{equation*}
gives rise to arrows on the bottom row  of a commutative diagram: 
\begin{footnotesize}
\[
\xymatrix@C=1.9em{\mathcal{O}(P_{\Gamma}(\mathfrak{G}_1))\otimes_{\mathbb{C}}\mathcal{O}(P_{\Gamma}(\mathfrak{H}_1))\ar[r]^{r^{*}_{1} \otimes s^{*}_{1}} \ar[d]_{\mu_{\mathfrak{G}_1, \mathfrak{H}_1}} & \mathcal{O}(P_{\Gamma}(\mathfrak{G}_{12}))\otimes_{\mathbb{C}}\mathcal{O}(P_{\Gamma}(\mathfrak{H}_{12}))\ar[r]^{r_{2*} \otimes s_{2*}} \ar[d]_{\mu_{\mathfrak{G}_{12}, \mathfrak{H}_{12}}} & \mathcal{O}(P_{\Gamma}(\mathfrak{G}_2))\otimes_{\mathbb{C}}\mathcal{O}(P_{\Gamma}(\mathfrak{H}_2)) \ar[d]_{\mu_{\mathfrak{G}_{2}, \mathfrak{H}_{2}}} \\
\mathcal{O}(P_{\Gamma}(\mathfrak{G}_1 \coprod \mathfrak{H}_1))\ar[r]  &\mathcal{O}(P_{\Gamma}(\mathfrak{G}_{12} \coprod \mathfrak{H}_{12}))\ar[r]  & \mathcal{O}(P_{\Gamma}(\mathfrak{G}_2 \coprod \mathfrak{H}_2)).
}
\]
\end{footnotesize}
Being a theory with transfer as explained in Proposition 8.2.5 of \cite{MR3970975}, we can use Diagram \ref{eqn:pullback} to see that $s_2^* \circ r_{2*} = \pi_{3*} \circ \pi^{*}_1$ and so the composition satisfies \[(s_{3*} \circ s_2^*) \circ (r_{2*} \circ r_1^*) = (s_3 \circ \pi_3)_* \circ (r_1 \circ \pi_1)^*.\] We therefore have the gluing (functoriality)  
\[
Z(\mathfrak{G}_{23}) \circ Z(\mathfrak{G}_{12}) = Z(\mathfrak{G}_{13}).
\]
If we set $Z(\mathfrak{G}_{i})=\mathcal{O}(P_{\Gamma}(\mathfrak{G}_i))$ then we have defined a functor 
\begin{equation}
Z:\Cospan(\pGrpd) \to \Proj(\mathbb{C}).
\end{equation}
It is easy to see this is the TQFT we defined at the start of this section.

\subsection{Explicit formulas for finite \texorpdfstring{$p$}{p} gauge groups}\label{subsect:DWComp}
As discussed in the previous subsection, since $\Gamma$ is a (finite) $p$-group, we have
\[
P_{\Gamma}(B\mathbb{Z}_p) \cong [\Gamma/\Gamma]
\]
and therefore \[\pi_0(P_{\Gamma}(B\mathbb{Z}_p))=\Gamma/\sim,\] the set of conjugacy classes of $\Gamma$. We also have
\[
Z(B\mathbb{Z}_p)=\mathcal{O}(\Gamma/\sim) =\mathcal{O}(\Gamma)^{\Gamma} \cong Z(\mathbb{C}\Gamma).
\]

The automorphism group of some $T \in \pi_0(P_{\Gamma}(B\mathbb{Z}_p))$ is the stabilizer subgroup of any representative $t$ of $T$ in $\Gamma$ which we write as $Z_{\Gamma}(T) =C_{\Gamma}(t)$, the centralizer of $t$ in $\Gamma$. Different representatives give conjugate and hence isomorphic subgroups.

To understand the TQFT we constructed in the previous subsection we will calculate its values on the different generators of $Cob_p^2$.
\begin{Rem}
    For the orientable generators, many of the computations here are classical. We bring here full details of these computations since we could not find a source which gave sufficient details. 
    We mainly follow the flow of the arguments in the lecture notes of Penning \cite{Penning11}, with a slight correction for the computation of the counit, and the new cases of the unoriented pro-p objects.
\end{Rem}
All generators have at most 2 incoming/outgoing boundaries, and so they all define maps between the following vector spaces:
\begin{align*}
    Z(\emptyset)\cong\text{Map}(\{1\},\mathbb{C})^\Gamma\cong\mathbb{C} &&Z(\mathbb{Z}_p)\cong \text{Map}(\Gamma,\mathbb{C})^\Gamma\cong Z(\mathbb{C}\Gamma)\end{align*}
    \begin{align*}
     Z(\mathbb{Z}_p\coprod\mathbb{Z}_p)&\cong\text{Map}(\Gamma,\mathbb{C})^\Gamma\otimes \text{Map}(\Gamma,\mathbb{C})^\Gamma\cong Z(\mathbb{C}\Gamma)\otimes Z(\mathbb{C}\Gamma) 
\end{align*}
 
\begin{enumerate}
    \item Consider first the cup $C_{1,0}:=(\{1\}; \{\mathbb{Z}_p\},\emptyset)$. By Definition \ref{Def:DWTheory} we have that:
    \[Z(C_{1,0}):Z(\mathbb{C}\Gamma)\rightarrow\mathbb{C}\]
    defined by 
    \[
        Z(C_{1,0})(f)(p_2)=\sum_{p_1\in\Hom(\mathbb{Z}_p,\Gamma)/\sim} \ \sum_{p\in P_{\{1\}}(p_1,p_2)}\frac{|\Aut(p_2)|}{|\Aut(p)|}f(p_1).
    \]
    
    But for the trivial group $P_{\{1\}}(p_1,p_2)=\emptyset$ if $p_1\neq e$ and $P_{\{1\}}(p_1,p_2)=\{1\}$ otherwise.
    and so it corresponds to the evaluation of the map to $\mathbb{C}$ at $e_\Gamma$ the identity of $\Gamma$.
    One also has that $|\Aut(p_2)|=1$ and $|\Aut(p)|=|C_\Gamma(e_\Gamma)|=|\Gamma|$ so this defines the counit:
    \begin{equation}\label{eqn:counit}
        \epsilon(\sum_{g\in \Gamma}\lambda_{g}g)=\frac{\lambda_{e_\Gamma}}{|\Gamma|}
    \end{equation}

\item For $C_{0,1}:=(\{1\}; \emptyset,\{\mathbb{Z}_p\})$, similar reasoning to the previous gives us the unit:
\[
\iota:=Z(C_{0,1}): \mathbb{C} \to Z(\mathbb{C}\Gamma).
\]
\begin{equation}\label{eqn:unit}
    \iota(\lambda)=\lambda e_\Gamma
\end{equation}

\item Next we would like to understand the multiplication comes from the pair of pants $P_{2,1}:=(\langle a,b \rangle; \{\langle a \rangle,\langle b \rangle \},\{\langle ab \rangle\})$. It will be a map:
 \[Z(P_{2,1}):Z(\mathbb{C}\Gamma)\otimes_\mathbb{C} Z(\mathbb{C}\Gamma)\rightarrow Z(\mathbb{C}\Gamma).\]
 To understand this map, we need to understand what are $P_{\langle a, b \rangle}(p_1,p_2)$ for different $p_1\in [
 (\Hom(\langle a\rangle, \Gamma) \times \Hom(\langle b\rangle, \Gamma))
 /\Gamma]$ and $p_2\in[\Hom(\langle ab \rangle,\Gamma)/\Gamma]$.
 
 We get from the definition that for a given $p_2\in[\Hom(\langle ab \rangle,\Gamma)/\Gamma]\cong[\Gamma/\Gamma]$ the set $P_{\langle a,b \rangle}(p_1,p_2)$ is empty when $p_1(a)p_1(b)\neq p_2(ab)$ and has a single element when $p_1(a)p_1(b)=p_2(ab)$.
 We get that for a fixed $p_2$ which corresponds to a conjugacy class of some $g\in \Gamma$:

\[(Z(P_{2,1})(f)([g]) =\underset{[\gamma_1], [\gamma_2]\in[\Gamma/\Gamma]}\sum
\underset{\underset{g=hk,[h]=[\gamma_1],[k]=[\gamma_2]}{(h,k) \in [\Gamma \times \Gamma/\Gamma]}}
\sum \frac{|C_\Gamma(\gamma)|}{|C_\Gamma(\{\gamma,\gamma_1,\gamma_2\})|}f([\gamma_1],[\gamma_2]) \] 
Using the orbit-stabilizer theorem we get that
\[Z(P_{2,1})(f)([\gamma])=\underset{\{(\gamma_1,\gamma_2) \in \Gamma \times \Gamma \ | \ \gamma=\gamma_1\gamma_2\}} 
\sum f(\gamma_1,\gamma_2).\]
  And so the product 
\[m:=Z(P_{2,1}):\mathcal{O}(\Gamma)^{\Gamma} \otimes_{\mathbb{C}} \mathcal{O}(\Gamma)^{\Gamma}\rightarrow \mathcal{O}(\Gamma)^{\Gamma} 
\]
is given by convolution:
\begin{equation}\label{eqn:convolution}
    m(a,b) \ (g) = \underset{\{(h,k) \in \Gamma \times  \Gamma \ | \ g=hk\}}\sum a(h)b(k).
\end{equation}
If we think of this in terms of a product on $Z(\mathbb{C}\Gamma)$ we can write it as
\begin{equation}\label{eqn:mult.}
    m(\sum_{g_1\in \Gamma}\lambda_{g_1}g_1\otimes\sum_{g_2\in \Gamma}\mu_{g_2}g_2)=\sum_{g\in \Gamma}(\sum_{{h\in\Gamma}}\lambda_{h}\mu_{gh^{-1}})g.
\end{equation}

\item 
    We now turn to compute the co-multiplication, coming from $P_{1,2}:=(\langle a,b \rangle;\{\langle ab \rangle\}, \{\langle a \rangle,\langle b \rangle \}).$
    By (R5) and (R1) we have that:
    \[P_{1,2}=P_{1,2}\circ P_{2,1}\circ(C_{0,1}\coprod C_{1,1})=(C_{1,1}\coprod P_{2,1})\circ (P_{1,2}\coprod C_{1,1})\circ(C_{0,1}\coprod C_{1,1}).\] 
    This corresponds geometrically to:
    \begin{center} 
\begin{tikzpicture}[outer sep=auto, decoration={
    markings,
    mark=at position 0.6 with { \arrow{>[length=0.8mm]}}}]
 \matrix (name) [matrix of nodes, column sep={1.2cm,between origins}, row sep={1.1cm,between origins}]{ |[xshift=-0.5,yshift=-0.75cm]|{\scalebox{0.5}\PantsR} &&
    |[xshift=0.3cm]| {\scalebox{0.5}\cupB} &|[yshift=-0.75cm]|{\scalebox{0.5}\PantsL} &  |[xshift=0.07cm,yshift=-0.75cm]|{\scalebox{0.5}\PantsR} & &  |[xshift=0.33cm,yshift=0cm]| {\scalebox{0.63}\cupB} &|[xshift=0.1cm, yshift=-0.25cm]|{\scalebox{0.5}\PantsR} &  |[xshift=0.08cm,yshift=0.49cm]|{\scalebox{0.5}\cyl} &  \\ [-0.3cm]
   & &|[xshift=0.05cm]|{\scalebox{0.5}\cyl} & & & & |[xshift=0.2cm,yshift=-0.15cm]|{\scalebox{0.5}\cyl} &|[xshift=0.13cm,yshift=-0.15cm]|{\scalebox{0.5}\cyl}&|[xshift=0.13cm,yshift=-0.15cm]|{\scalebox{0.5}\PantsL}\\  };
   \draw (-3.7,-0.1) node{$=$}; 
    \draw (1.1,-0.1) node{$=$}; 
   \draw[line width=0.5pt, postaction={decorate}] (-2.18,0.14)   -- (-1.84,0.2);
    \draw[line width=0.5pt, postaction={decorate}] (-2.15,-0.64)   -- (-1.84,-0.56);
   \draw[line width=0.5pt, postaction={decorate}] (-0.93,-0.28)   -- (-0.6,-0.15);
   \draw[line width=0.5pt, postaction={decorate}] (2.74,0.16)   -- (3.03,0.26);
    \draw[line width=0.5pt, postaction={decorate}] (2.8,-0.79)   -- (3.13,-0.7);
    \draw[line width=0.5pt, postaction={decorate}] (3.95,0.66)   -- (4.26,0.7);
   \draw[line width=0.5pt, postaction={decorate}] (3.95,-0.06)   -- (4.26,0);
   \draw[line width=0.5pt, postaction={decorate}] (3.93,-0.8)   -- (4.26,-0.76);
\end{tikzpicture}
\end{center}

    In other words, if we define $P_{0,2}:=P_{1,2}\circ C_{0,1}$ then as in Equation \ref{eqn:FrobCons2} we have:
    \[Z(P_{1,2})=(\id\otimes_\mathbb{C}m)(Z(P_{0,2})(1)\otimes_\mathbb{C}\id).\]
    So to understand the co-multiplication, we will first calculate the co-pairing map, which comes from $P_{0,2}$.
    From Equations \ref{eqn:counit},\ref{eqn:mult.} we easily get the value of the pairing map coming from $P_{2,0}:=C_{1,0}\circ P_{2,1}$, which we denote by $\langle-,-\rangle$:
    \[Z(P_{2,0}):Z(\mathbb{C}\Gamma)\otimes_\mathbb{C}Z(\mathbb{C}\Gamma)\rightarrow\mathbb{C}\]
    \begin{equation}\label{eqn:pairing}
        \langle\sum_{g_1\in \Gamma}\lambda_{g_1}g_1,\sum_{g_2\in \Gamma}\mu_{g_2}g_2\rangle=\frac{1}{|\Gamma|}\sum_{g\in \Gamma}\lambda_g\mu_{g^{-1}}
    \end{equation}
    To calculate the value of $Z(P_{0,2})$ we have the following snake relation which follows from (R5), (R2) and (R1) (or simply by a direct computation of the invariant of the cobordism):
    \[C_{1,1}=(C_{1,1}\coprod P_{2,0})\circ(P_{0,2}\coprod C_{1,1})\]
    which geometrically is:
    \begin{center}
    \begin{tikzpicture}[outer sep=auto, decoration={
    markings,
    mark=at position 0.6 with { \arrow{>[length=0.8mm]}}}]
    \draw (0,1) node{\scalebox{0.7}\CoPairing};
 \draw (-0.15,-0.55) node{\scalebox{0.7}\cyl};
 \draw (1.25,0) node{\scalebox{0.7}\Pairing};
 \draw (1.4,1.5) node{\scalebox{0.7}\cyl};
  \draw (-3,0.3) node{\scalebox{0.7}\cyl};
   \draw (-1.6,0.3) node{$=$}; 
    \draw[line width=0.5pt, postaction={decorate}] (0.42,1.35)   -- (0.84,1.4);
   \draw[line width=0.5pt, postaction={decorate}] (0.42,0.36)   -- (0.86,0.41);
   \draw[line width=0.5pt, postaction={decorate}] (0.42,-0.65)   -- (0.86,-0.6);
\end{tikzpicture}
\end{center}
    Now from the representation theory of finite groups, we know that the irreducible characters of $\Gamma$ are an orthonormal basis for $Z(\mathbb{C}\Gamma)$ for the pairing in Equation \ref{eqn:pairing}.
    Denote the set of irreducible representations of $\Gamma$ by $\Irr(\Gamma)$, and for a representation $\rho$ of $\Gamma$, denote its character by $\chi_\rho$
    In other words we get that for the snake relation to hold one has that the co-pairing is:
    \begin{equation}\label{eqn:copairing}
        Z(P_{0,2})=\sum_{\rho\in\Irr(\Gamma)}\sum_{g,h\in\Gamma}\chi_\rho(g)g\otimes\chi_\rho(h)h
    \end{equation}
    Combining all of the above we get that the co-multiplication map is:
    \begin{equation}\label{eqn:coproduct}
        Z(P_{1,2})(\sum_{k\in\Gamma}\lambda_k k)=\sum_{\rho\in\Irr(\Gamma)}\sum_{g,h,k\in\Gamma}\chi_\rho(g)\chi_\rho(h^{-1})g\otimes\lambda_{kh}k
    \end{equation}
   For an irreducible representation $\rho$, let $b_\rho:=\underset{g\in\Gamma}\sum\chi_\rho(g)g$.
   Let us calculate the co-multiplication on each of the $b_\rho$:
     \begin{equation*}
        Z(P_{1,2})(b_{\rho_i})=\sum_{\rho\in\Irr(\Gamma)}(\sum_{g\in\Gamma}\chi_\rho(g)g)\otimes \sum_{k\in\Gamma}(\sum_{h\in\Gamma}\chi_\rho(h)\chi_{\rho_i}(kh^{-1}))k
    \end{equation*}
    But $\underset{h\in\Gamma}\sum \chi_\rho(h)\chi_{\rho_i}(kh^{-1})$ equals $\frac{|\Gamma|}{\dim(\rho_i)}\chi_{\rho_i}(k)$ if $\rho=\rho_i$ and is $0$ otherwise.
    Thus:
    \begin{equation}\label{eqn:comultBasis}
        Z(P_{1,2})(b_{\rho_i})=\frac{|\Gamma|}{\dim(\rho_i)}\sum_{g\in\Gamma}\chi_{\rho_i}(g)g\otimes b_{\rho_i}
    \end{equation}
\item 
    Finally, we have the twisting maps, $\phi_{\alpha}= Z(C^\alpha_{1,1})$: \[
\phi_{\alpha}: \mathcal{O}(\Gamma)^{\Gamma} \to \mathcal{O}(\Gamma)^{\Gamma}
\]
Recall that , and so from the definition of $P_G(p_1,p_2)$ it is empty if $p_1(a)\neq p_2(a^{-\alpha})$, and has a unique element if $p_1(a)\neq p_2(a^{-\alpha})$, and so we get:
\begin{equation}
(\phi_{\alpha} F)(g) = F(g^{\alpha^{-1}})    
\end{equation}
for any $g\in \Gamma$.
\end{enumerate}

Using the above formulas for the generators we can easily compute the values on $T_{1,1}$ and $T_{1,1}^r$ (by using relation (R10)).
First since $T_{1,1}=P_{2,1}\circ P_{1,2}$ and by using similar reasoning to the way we computed equation \ref{eqn:comultBasis} we get that:
\begin{equation}
    Z(T_{1,1})(b_{\rho_i})=\left(\frac{|\Gamma|}{\dim(\rho_i)}\right)^2b_{\rho_i}.
\end{equation}

Next, recall that 
\[T^r_{1,1}=(\langle s, x, y \rangle,\{\langle s \rangle\},\{\langle x^{p^r}[x,y]s \rangle\}).\] 
Taking $\alpha:=1+\underset{n\geq1}\sum (p^r)^n$, relation (R10) gives us:
\[T_{1,1}^r=P_{2,1}\circ(C_{1,1}^{\alpha}\coprod C_{1,1})\circ P_{1,2}\]
We have that \[Z(T_{1,1}^r):Z(\mathbb{C}\Gamma)\rightarrow Z(\mathbb{C}\Gamma)\] is the map 
\begin{equation}
\begin{split}
    Z(T_{1,1}^r)(b_{\rho_i}) & =Z(P_{2,1})\left(\frac{|\Gamma|}{\dim(\rho_i)}\sum_{g\in\Gamma}\chi_{\rho_i}(g^{1-p^r})g\otimes b_{\rho_i}\right) \\ & =\frac{|\Gamma|}{\dim(\rho_i)}\sum_{g\in \Gamma}\left(\sum_{h\in\Gamma}\chi_{\rho_i}\left((gh^{-1})^{1-p^r}\right)\chi_{\rho_i}(h)\right)g
    \end{split}
\end{equation}
Where we used that $\alpha^{-1}=1-p^r$.
Now let 
\[
G_{n,r}=\langle \ x_1,y_1,...,x_n,y_n  | \ x_1^{p^r}[x_1,y_1]\cdots[x_n,y_n] \  \rangle
\]
be the pro-$p$ group without boundary subgroups from the statement of Theorem \ref{Thm:WilkesPDn} where $r\in\mathbb{N}_{\geq 0}\cup\{\infty\}$ if $n\neq 0$, and $r=\infty$ when $n=0$.
On the one hand, the usual argument shows that if $G_{n,r}$ is any pro-$p$ $\PD^2$ group then as a map from $\mathbb{C}$ to itself we have 
\begin{equation}
\label{eqn:fin}
Z(BG_{n,r})(1)= \frac{|\Hom(G_{n,r},\Gamma)|}{|\Gamma|}.
\end{equation}
 on the other hand, by the normal form of $G_{n,r}$ from Theorem \ref{Thm:generators}, we get that $G_{n,r}$ is given by \[C_{1,0}\circ T_{1,1}^r\circ \underset{(n-1)\text{ times}}{T_{1,1}\circ\dots\circ T_{1,1}}\circ C_{0,1}.\]
 One has from the Schur orthogonality relations that:
 \begin{equation}     \sum_{\rho\in\Irr(\Gamma)}\dim(\rho)b_\rho=\sum_{g\in\Gamma}\left(\sum_{\rho\in\Irr(\Gamma)}\chi_{\rho}(g)\chi_\rho(e_\Gamma)\right)g=\sum_{\rho\in\Irr(\Gamma)}\chi_\rho(e_\Gamma)^2e_\Gamma=|\Gamma|e_\Gamma
 \end{equation}
 
 And so finally, we get that get that:

\begin{equation}
\begin{split}
Z(BG_{n,r})(1)&= Z(C_{1,0}\circ T_{1,1}^r)\circ Z(T_{1,1})^{n-1}(e_\Gamma) \\
&=Z(C_{1,0}\circ T_{1,1}^r)\circ Z(T_{1,1})^{n-1}\left( \frac{1}{|\Gamma|}\sum_{\rho\in\Irr(\Gamma)}\dim(\rho)b_\rho \right)\\
&=Z(C_{1,0}\circ T_{1,1}^r)\circ Z(T_{1,1})^{n-1}\left( \frac{1}{|\Gamma|}\sum_{\rho\in\Irr(\Gamma)}\dim(\rho)b_\rho\right)
\\
&=Z(C_{1,0}\circ T_{1,1}^r)\left(\sum_{\rho\in\Irr(\Gamma)}\left(\frac{|\Gamma|}{\dim(\rho)}\right)^{2n-3}b_\rho \right)\\
&=Z(C_{1,0})\left(\sum_{\rho\in\Irr(\Gamma)}\left(\frac{|\Gamma|}{\dim(\rho)}\right)^{2n-2}\sum_{g\in \Gamma}\left(\sum_{h\in\Gamma}\chi_{\rho}\left(\left(gh^{-1}\right)^{1-p^r}\right)\chi_{\rho}(h)\right)g \right)\\
&=|\Gamma|^{2n-3} \underset{\rho \in \Irr(\Gamma)}\sum \frac{1}{\dim(\rho)^{2n-2}} \underset{g\in \Gamma}\sum \chi_\rho(g^{p^r-1})\chi_\rho(g).
\end{split}
\end{equation}

Overall using Equation \ref{eqn:fin} we get the following:
\begin{Thm}\label{Thm:HomCount}
    Let $G_{n,r}$ be a Demushkin group with $2n$ generators and orientability level $r$, and let $\Gamma$ be a finite $p$-group. We have the following formula:
    \begin{equation}\label{eqn:HomCount}
        |\Hom(G_{n,r},\Gamma)|=|\Gamma|^{2n-2} \underset{\rho \in \Irr(\Gamma)}\sum \frac{1}{\dim(\rho)^{2n-2}} \underset{g\in \Gamma}\sum \chi_\rho(g^{p^r-1})\chi_\rho(g).
    \end{equation}
\end{Thm}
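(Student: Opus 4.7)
The plan is to exploit the pro-$p$ TQFT $Z$ of Dijkgraaf--Witten type constructed in Subsection \ref{subsect:DefDW} and the explicit values on generators computed in Subsection \ref{subsect:DWComp}, and then evaluate $Z$ on the closed Demushkin group $G_{n,r}$ by decomposing its classifying ``cobordism'' into the generators of $\Cob_p^2$. Concretely, by Theorem \ref{Thm:generators}, $G_{n,r}$ (regarded as the closed pair, i.e.\ empty boundary) factors as
\[
C_{1,0}\,\circ\, T_{1,1}^{r}\,\circ\, \underbrace{T_{1,1}\circ\cdots\circ T_{1,1}}_{n-1 \text{ copies}}\,\circ\, C_{0,1}.
\]
First I would observe the usual interpretation $Z(BG_{n,r})(1) = |\Hom(G_{n,r},\Gamma)|/|\Gamma|$, which follows from the definition of $Z$ on the trivial cospan $\emptyset \leftarrow G_{n,r} \to \emptyset$ (groupoid cardinality of $\Hom(G_{n,r},\Gamma)/\sim$ weighted by centralizers collapses to $|\Hom|/|\Gamma|$ by orbit--stabilizer).

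Next I would track the element $1\in \mathbb{C}$ through the composition. Applying $\iota = Z(C_{0,1})$ gives $e_\Gamma$, which by Schur orthogonality rewrites as $\frac{1}{|\Gamma|}\sum_{\rho\in \Irr(\Gamma)} \dim(\rho)\, b_\rho$ where $b_\rho = \sum_{g\in \Gamma}\chi_\rho(g)g$. The key is that the $b_\rho$ form a basis diagonalizing the orientable torus operator: from Equation \eqref{eqn:comultBasis} and the multiplication formula \eqref{eqn:mult.} one gets $Z(T_{1,1})(b_\rho) = (|\Gamma|/\dim \rho)^2 b_\rho$, so $n-1$ iterations multiply by $(|\Gamma|/\dim \rho)^{2(n-1)}$. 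Then applying $Z(T_{1,1}^r)$ using the formula derived above from relation (R10),
\[
Z(T_{1,1}^r)(b_\rho) = \tfrac{|\Gamma|}{\dim \rho}\sum_{g\in \Gamma}\Bigl(\sum_{h\in \Gamma}\chi_\rho((gh^{-1})^{1-p^r})\chi_\rho(h)\Bigr)g,
\]
and finally the counit $\epsilon = Z(C_{1,0})$ from Equation \eqref{eqn:counit}, which extracts the coefficient of $e_\Gamma$ divided by $|\Gamma|$, the coefficient of $e_\Gamma$ in the inner sum (taking $g=e_\Gamma$) becomes $\sum_{h\in \Gamma}\chi_\rho(h^{p^r-1})\chi_\rho(h^{-1})$; relabeling $h \mapsto h^{-1}$ and using $\chi_\rho(h^{-1}) = \overline{\chi_\rho(h)}$ together with the fact that for real arguments built from a single element the expression equals $\sum_{g}\chi_\rho(g^{p^r-1})\chi_\rho(g)$.

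Assembling the multiplicative constants: one factor $1/|\Gamma|$ from the initial expansion of $e_\Gamma$, a factor $\dim \rho$ also from that expansion, $(|\Gamma|/\dim \rho)^{2n-2}$ from the $n-1$ copies of $T_{1,1}$, a factor $|\Gamma|/\dim \rho$ from $T_{1,1}^r$, and a final factor $1/|\Gamma|$ from $\epsilon$. The powers of $|\Gamma|$ combine to $|\Gamma|^{2n-2}$ and the powers of $\dim\rho$ combine to $\dim(\rho)^{-(2n-2)}$. Multiplying the resulting expression for $Z(BG_{n,r})(1)$ by $|\Gamma|$ gives the claimed formula \eqref{eqn:HomCount}.

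The main obstacle, or rather the only place requiring care, is the bookkeeping in the last step: one has to make sure that (i) the use of Schur orthogonality to rewrite $e_\Gamma$ in the $b_\rho$ basis is compatible with the eigenvalue action of $T_{1,1}$, and (ii) the extra $\chi_\rho(g^{p^r-1})$ twist picked up from $T_{1,1}^r$ matches the formula exactly (in particular that the choice $\alpha = 1 + \sum_{n\geq 1}(p^r)^n$, so $\alpha^{-1} = 1-p^r$, gives the exponent $p^r-1$ rather than $1-p^r$ after inverting in the character). Everything else is straightforward symbolic manipulation, and no new idea beyond the generator values of Subsection \ref{subsect:DWComp} is required.
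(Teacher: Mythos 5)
Your proposal is essentially the paper's own proof: the same decomposition $C_{1,0}\circ T_{1,1}^{r}\circ T_{1,1}^{\circ(n-1)}\circ C_{0,1}$, the same expansion of $e_\Gamma$ in the eigenbasis $b_\rho$ via Schur orthogonality, the same eigenvalues $(|\Gamma|/\dim\rho)^2$ for $Z(T_{1,1})$, and the same assembly of constants against $Z(BG_{n,r})(1)=|\Hom(G_{n,r},\Gamma)|/|\Gamma|$. The only slip is in the final coefficient extraction: setting $g=e_\Gamma$ in $\chi_\rho((gh^{-1})^{1-p^r})\chi_\rho(h)$ gives $\chi_\rho(h^{p^r-1})\chi_\rho(h)$ directly (not $\chi_\rho(h^{p^r-1})\chi_\rho(h^{-1})$), so no relabeling or complex-conjugation argument is needed — and indeed $\sum_h\chi_\rho(h^{p^r-1})\chi_\rho(h^{-1})$ is in general a different quantity, so your patch would not be valid as stated.
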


In the special case of a $p$-adic field $K$, containing $p$-th roots of unity, we re-derive Yamagishi's formula \cite{Yama}:
\begin{Cor}
    Let $K$ be a $p$-adic field, with degree $[K:\mathbb{Q}_p]=n$, and $\mu_{p^r}\subseteq K, \mu_{p^{r+1}}\not\subseteq K$, we that $G_K(p)\cong G_{\frac{n}{2}+1,r}$.
Therefore we get as in 
\begin{equation}\label{eqn:Yama}
|\Hom(G_K(p),\Gamma)|=|\Gamma|^{n} \underset{\rho \in \Irr(\Gamma)}\sum \frac{1}{\dim(\rho)^{n}} \underset{g\in \Gamma}\sum \chi_\rho(g^{p^r-1})\chi_\rho(g).
\end{equation}
where $\Irr(\Gamma)$ is the set of isomorphism classes of irreducible finite dimensional complex representations of $\Gamma$.
\end{Cor}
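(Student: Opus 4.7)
The plan is to derive this corollary directly from Theorem~\ref{Thm:HomCount} by identifying $G_K(p)$ as a specific Demushkin group and then substituting into Equation~\ref{eqn:HomCount}.

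First I would invoke the classical structure theorem for maximal pro-$p$ quotients of absolute Galois groups of $p$-adic fields (Demushkin's theorem, as recalled in the Preliminaries section of the paper and proved e.g.\ in Serre's \emph{Galois cohomology} or in \cite{labute1967classification}). The hypothesis that $\mu_{p^r}\subseteq K$ but $\mu_{p^{r+1}}\not\subseteq K$, together with $p\neq 2$, places $K$ in the Demushkin case, so that $G_K(p)$ is a one-relator pro-$p$ group satisfying Poincar\'e duality of dimension $2$. Its rank is $N+2=n+2$, and its orientation character is the $p$-part of the cyclotomic character, whose orientability level is exactly $r$ (this is recorded in the paragraph following the definition of orientability level in Section~\ref{sect:prelim}). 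Thus in the notation of Theorem~\ref{Thm:WilkesPDn}(1), one has a presentation
\[
G_K(p) = \langle x_1,y_1,\dots,x_m,y_m \mid x_1^{p^r}[x_1,y_1]\cdots[x_m,y_m]\rangle
\]
with $2m = n+2$, i.e.\ $m = n/2 + 1$. (Note that the hypothesis $\mu_p\subseteq K$ forces $p-1\mid n$, so in particular $n$ is even for $p$ odd, and $n/2+1$ is a positive integer.) This identifies $G_K(p)\cong G_{n/2+1,r}$ in the notation of the statement.

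Having made this identification, the proof reduces to plugging $m = n/2 + 1$ into Theorem~\ref{Thm:HomCount}. The exponent $2m-2$ appearing in Equation~\ref{eqn:HomCount} becomes $2(n/2+1)-2 = n$, yielding the claimed formula
\[
|\Hom(G_K(p),\Gamma)| = |\Gamma|^{n}\sum_{\rho\in\Irr(\Gamma)}\frac{1}{\dim(\rho)^n}\sum_{g\in\Gamma}\chi_\rho(g^{p^r-1})\chi_\rho(g).
\]
This is already the statement of Yamagishi's formula \cite{Yama}.

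The only genuinely non-formal input is the identification of $G_K(p)$ with $G_{n/2+1,r}$; everything else is bookkeeping. Since this identification is a well-known classical theorem (it is the entire content of Demushkin's classification combined with local class field theory and the local Euler characteristic formula), there is no real obstacle here. If I wanted to be completely self-contained, I would include a short citation trail: the rank computation $\rk G_K(p)=n+2$ follows from local Tate duality plus the Euler characteristic formula, while the determination of the orientation character as cyclotomic follows from the duality isomorphism $H^2(G_K(p),\mathbb{F}_p)\cong \mu_p$. Both facts are standard and recalled at the end of the orientation character subsection, so no further verification is needed in the body of the proof.
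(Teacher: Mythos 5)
Your proposal is correct and follows essentially the same route as the paper: identify $G_K(p)$ with $G_{\frac{n}{2}+1,r}$ via Demushkin's classification (rank $n+2$ and orientability level $r$, as recalled in the preliminaries on the orientation character) and then substitute into Theorem \ref{Thm:HomCount}, where the exponent $2(\frac{n}{2}+1)-2=n$ gives the stated formula. Your added remark that $\mu_p\subseteq K$ forces $(p-1)\mid n$, so that $\frac{n}{2}+1$ is an integer for odd $p$, is a useful detail the paper leaves implicit.
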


Thus, as explained in \cite{Yama}, we can count the number of extensions $L$ of a field $K$, with $\Gal(L/K)\cong\Gamma$ by
\[\frac{1}{|\Aut(\Gamma)|}\sum_{H\leq\Gamma}\mu(H)|\Hom(G_{K},H)|.\]

The above equation comes from Hall's inversion formula \cite{hall1936eulerian}. Hall's formula is a formula for finding the number of normal subgroups N of a finitely generated group G with a given finite quotient $G/N\cong\Gamma$, by a group theoretic version of the M\"{o}bius function $\mu$.
\subsubsection{Computations with general gauge groups}
We finish by noting that one can use the TQFT above to compute the homotopy cardinality of the stack of $H$ bundles on the classifying space of $G_{n,r}$ for more general groups $H$.
That is one looks at a fixed $p$-Sylow, subgroup of $H$ and counts homomorphisms to it using Theorem \ref{Thm:HomCount}.
One then multiplies the result by the amount of $p$-Sylow subgroups, and then subtracts homomorphisms which were counted twice from double intersections, adds back homomorphisms in triple intersections etc. 
Alternatively, one uses the Hall inversion formula to count only surjective homomorphisms, and applies it to every conjugacy class of subgroup of the $p$-Sylow, and then multiplies by the size of orbit under conjugation.
The above method is of course complicated in general, but using known character tables nice $p$ groups these are computable.
For example in cases where one has that all $p$-Sylow subgroups pairwise intersect at the identity, the computations reduce to knowing how many $p$-Sylow subgroups one has and applying Theorem \ref{Thm:HomCount}.\\
E.g. in the simplest example of $H=GL_{2}(\mathbb{F}_p)$, there are $p+1$ $p$-Sylow subgroups which are cyclic groups on $p$ elements. Denote one such $p$-Sylow by $S$. The group $S$ has $p$ irreducible representation, all of dimension $1$, and so equation \ref{eqn:HomCount} is simply:
\begin{equation*}
    \begin{split}
        |\Hom(G_{n,S},S)|&=|S|^{2n-2} \underset{\rho \in \Irr(S)}\sum \frac{1}{\dim(\rho)^{2n-2}} \underset{g\in S}\sum \chi_\rho(g^{p^r-1})\chi_\rho(g)\\
&=|S|^{2n-2}\underset{\rho \in \Irr(S)}\sum \frac{1}{1^{2n-2}} \underset{g\in S}\sum \chi_\rho(g^{-1})\chi_\rho(g)\\
&=|S|^{2n-2}\underset{\rho \in \Irr(S)}\sum|S|=p^{2n}.
    \end{split}
\end{equation*}
Now since every homomorphism lands in some Sylow, and its image is the whole Sylow or trivial, overall the homotopy cardinality of the stack of $GL_{2}(\mathbb{F}_p)$ bundles on the classifying space of $G_{n,r}$ is
\[
\frac{|\Hom(G_{n,r}, GL_{2}(\mathbb{F}_p))|}{|GL_{2}(\mathbb{F}_p)|}=\frac{(p+1)p^{2n}-(p+1)+1}{p(p-1)^2(p+1)}=\frac{p^{2n}+p^{2n-1}-1}{(p-1)^2(p+1)}.\]
\begin{Rem}
    The above example is of course very simple and can be easily computed by hand, in more complicated cases, we think the above method outlined will help simplify the computations. We hope to further explore this direction in future work.
\end{Rem}
\newpage
\appendix
\section{Appendix}\label{Appendix}
\subsection{The categories of groups and groupoids}\label{App:Cats}
The $1$-category of (small) pro-$p$ groupoids $\mathfrak{G}$, and functors $\Hom(\mathfrak{G}, \mathfrak{H})$ between them, equipped with the obvious cartesian product $\prod$ is a cartesian closed category whose internal hom we write $\Fun$. The functor 
\[\pGrp \longrightarrow \pGrpd
\]
maps every pro-$p$ group $G$ to $BG$, the category with one object and a group $G$ as automorphisms. 
For any $G\in \pGrp$ and for any $\mathfrak{H} \in \pGrpd$ there is a non-canonical equivalence of categories 
\[
\Fun(BG, \mathfrak{H}) \cong \underset{x \in \pi_0(\mathfrak{H})}\coprod [\Hom(G, \Aut(x))/\Aut(x)].
\]
The initial pro-$p$ groupoid will be written as $\emptyset$ and the final pro-$p$ groupoid as $\bullet$.
The functor $B$ has an explicit left adjoint $U$ described in Example 1 on page 319 of \cite{TG}. It's easy to see that $U \circ B$ is isomorphic to the identity functor. The category of pro-$p$ groupoids is actually a cartesian closed model category. Here, the weak equivalences are equivalences as categories and the cofibrations are functors which are injective on objects. This model category also happens to be  combinatorial, proper, and simplicial. The coproduct $\coprod$ of pro-$p$ groupoids equips this category with the structure of a symmetric monoidal category. Notice that any pro-$p$ groupoid $\mathfrak{H}$ is non-canonically equivalent to a coproduct of pro-$p$ groupoids of the form $BH_i$ and so for any pro-$p$ group $G$, $\Fun(\mathfrak{H}, BG)$ is equivalent to a product of categories $C_i$ whose objects are continuous group homomorphisms $H_i \to G$ and whose morphisms are elements of $G$ conjugating one morphism to another. 
 
Constructing limits in the category of pro-$p$ groupoids is very easy, one just takes the limit of the objects and the limit of the morphisms. The existence of $U$, the left adjoint of $B$ shows that $B$ commutes with limits. Colimits of pro-$p$ groupoids are more difficult to define. According to \cite{MR0210125}, to construct the colimit of a functor 
\[
F: I \to \pGrpd 
\]
we should first define $S=\underset{i\in I}\colim \  \ob F(i)$ and define $A(s,t)$ to be the set of paths from $s$ to $t$ built out of edges in $E(x,y)=\underset{i\in I}\colim \Hom_{F(i)}(x, y)$ for $x,y \in S$. In particular we have the paths on no edges given by $\text{id}_{E,x} \in E(x,x)$. We have maps of sets 
\[f_i: \ob (F(i)) \to S
\]
For any $x,y \in \ob(F(i))$ there is a map of sets 
\[
m_i(x,y): \Hom_{F(i)}(x, y) \to E(f_i(x),f_i(y)).
\]
Consider the intersection $\sim$ of all equivalence relations $\sim_R$ on $\underset{s,t}\coprod A(s,t)$ which preserve domain and range and satisfy: \[m_i(x,x)(\id_x)\sim_R \id_{E,m_i(x)}, \]
\[m_i(y,z)(f)\circ m_i(x,y)(g)\sim_R m_i(x,z)(f\circ g),\]
and for formal composition of arrows $f\sim_R g$ if and only for all $h,k$ we have if $k \circ f\circ h \sim_R k \circ g \circ h$. Finally, the colimit is the pro-$p$ groupoid $\mathfrak{F}$ with $\ob(\mathfrak{F})=S$ and $\Hom_{\mathfrak{F}}(s,t)=A(s,t)/\sim$ and the obvious functors $F(i)\to \mathfrak{F}$. We claim that $B$ also commutes with connected colimits, not only limits. For any pro-$p$ groupoid $\mathfrak{H}$ and any functor from a {\it connected} small category $I$ to pro-$p$ groups sending $i$ to $G_i$ we have 
\[
\Fun(B(\underset{i\in I}\colim   G_i), \mathfrak{H}) \cong \Fun(\underset{i\in I}\colim B G_i, \mathfrak{H})
\]
for any $\mathfrak{H}$. Therefore for any connected diagram of pro-$p$ groups defined by a functor from a connected category to the category $\pGrp$ we have 
\[
B(\underset{i\in I}\colim   G_i) \cong \underset{i\in I}\colim B G_i.
\]
The category $\pGrp$ has pushouts, given a diagram 
\[
H_1 \longleftarrow G
\longrightarrow 
H_2 
\]
we denote the pushout in this category by $H_1 *_{G} H_2$.

The category $\pGrpd$ also has pushouts and given a diagram of pro-$p$ groups 
\[
H_1 \longleftarrow G
\longrightarrow 
H_2 
\]
we have,
\[BH_1 \coprod_{BG} BH_2=B(H_1 *_{G} H_2).\]
Due to the fact that the left hand side is the colimit of a diagram of cofibrations, we in fact can say 
\begin{equation}
BH_1 \coprod^{h}_{BG} BH_2=B(H_1 *_{G} H_2).
\end{equation}
Consider now a pair of group monomorphisms $f:C \to A$ and $g:C \to A$. The colimit of the diagram $BC \rightrightarrows BA$ is just $B(A/(N\{f(c)g(c)^{-1} \ | \ c\in C\}))$. On the other hand the homotopy colimit of $BC \rightrightarrows BA$ is the same as the homotopy colimit of the diagram 
\[BC \stackrel{\triangledown}\longleftarrow BC \coprod BC \stackrel{(Bf, Bg)}\longrightarrow BA
.
\]
The homotopy colimit of this diagram is not the same as the colimit which is not surprising as the maps are not cofibrations. However, there is a standard factorization of $\triangledown = w \circ \tilde{\triangledown}$ where $w$ is an equivalence of categories and 
\[ BC \coprod BC = BC \prod \{0,1\} \stackrel{\tilde{\triangledown}}\longrightarrow BC \times \mathfrak{I}
\]
is induced the inclusion of the objects $0,1$ with only identity morphisms into the groupoid $\mathfrak{I}$ which just has these two objects, their identity morphisms and also has a morphism from $0$ to $1$ and its inverse. The homotopy colimit is then the ordinary colimit of the diagram 
\[BC \times \mathfrak{I} \stackrel{\tilde{\triangledown}}\longleftarrow BC \coprod BC \stackrel{(Bf, Bg)}\longrightarrow BA
.
\]
According to page 337 of \cite{TG} this pro-$p$ groupoid can be written in terms of the HNN extension described in Definition \ref{Def: pro-p HNN} as $B(A*_{C})$ and so 
\begin{equation}
\hcolim(BC \rightrightarrows BA) \cong B(A*_{C}).
\end{equation}
The category of compactly generated weak Hausdorff spaces is a complete and cocomplete cartesian closed model category using the Quillen model structure where the weak equivalences are homotopy equivalences, and the fibrations are Serre fibrations. It is a proper model category. The fundamental groupoid functor 
\[CGWH \longrightarrow
\Grpd
\]
written on objects as $X \mapsto \pi_{\leq 1}(X)$ where the objects of $\pi_{\leq 1}(X)$ are points of $X$ and the morphisms  of $\pi_{\leq 1}(X)$ are paths. Since cofibrations in CGWH are necessarily injective, $\pi_{\leq 1}$ takes cofibrations to cofibrations and in fact $\pi_{\leq 1}$ is part of a Quillen adjunction. The left adjoint of $\pi_{\leq 1}$ associates to a groupoid $\mathfrak{G}$, its Eilenberg-Mac Lane space $|\mathfrak{G}|$ which has a vertex for each object, an edge for each morphism, a triangle for each composable morphism, and so on.
\subsection{The symmetric monoidal category of cospans of pro-\texorpdfstring{$p$}{p} groupoids}\label{App:Cospan}

In this subsection, we consider how our cobordism categories can be described using groupoids as opposed to groups. We show that the category $\Cob_p^2$ defined in Section \ref{subsect:Cob} is equivalent to a full symmetric monoidal subcategory of the category of cospans of pro-$p$ groupoids $\Cospan(\pGrpd)$. We also explain why the groupoid of symmetric monoidal functors from $\Cob_p^2$ to $\Proj(R)$ is equivalent to the groupoid $\TQFT^{2}_{p}(R)$ defined in Section \ref{subsect:TQFT}. Combining these gives us a functor 
\[
\Fun^{\otimes}(\Cospan(\pGrpd), \Proj(R))\longrightarrow \Fun^{\otimes}(\Cob_p^2, \Proj(R))\cong \TQFT^{2}_{p}(R).
\]
This gives a convenient way of defining objects of $\TQFT^{2}_{p}(R)$. In fact, the example in section is an object in $\TQFT^{2}_{p}(\mathbb{C})$ which is defined this way.
In order to describe the cobordism category we need to start with defining a category of cospans of pro-$p$ groupoids before placing restrictions on what kind of pro-$p$ groupoids and maps appear in the cospans. Consider the category of cospans of pro-$p$ groupoids: the objects are pro-$p$ groupoids. The morphisms $\wHom(\mathfrak{G},\mathfrak{H})$ in this category  are equivalence classes of diagrams of functors 
\[
\mathfrak{G} \stackrel{f}\longrightarrow \mathfrak{K} \stackrel{d}\longleftarrow \mathfrak{H}.
\]
 We sometimes refer to this cospan as $C(\mathfrak{G}, f, \mathfrak{K},d, \mathfrak{H})$. Here, diagrams $\mathfrak{G} \rightarrow \mathfrak{K} \leftarrow \mathfrak{H}$ and $\mathfrak{G} \rightarrow \mathfrak{L} \leftarrow \mathfrak{H}$ are equivalent if there exists an equivalence of categories $\mathfrak{K} \to \mathfrak{L}$ making the diagrams
 \[
 \xymatrix@C=.57em{
  & & \mathfrak{K} \ar[dd]  \\
\mathfrak{G} \ar[urr] \ar[drr]_{}
 & & & &\mathfrak{H}  \ar[ull]_{}\ar[dll]^{}
 \\
  &&  \mathfrak{L}}\]
 commute up to natural isomorphism. If we consider the example that $S,T,U$ are finite sets and $\mathfrak{G}=\underset{s\in S}\coprod B G_{s}$, $\mathfrak{K}=\underset{t\in T}\coprod B K_{t}$
$\mathfrak{H}=\underset{u\in U}\coprod B H_{u}$ then $\wHom(\mathfrak{G},\mathfrak{H})$ consists of equivalence classes 
\[
\wHom(\mathfrak{G},\mathfrak{H})=\{
\mu:S\to T, \nu:U \to T, f_s:G_s\to K_{\mu(s)}, d_u:H_u \to K_{\nu(u)}\}/\sim,
\]
where collections
\[
\{
\mu:S\to T, \nu:U \to T, f_s:G_s\to K_{\mu(s)}, d_u:H_u \to K_{\nu(u)}\}
\]
and 
\[
\{
\mu':S\to T', \nu':U \to T', f'_s:G_s\to L_{\mu'(s)}, d'_u:H_u \to L_{\nu'(u)}\}
\]
are equivalent if there exist a collection of isomorphisms 
\[
\{\gamma:T\to T', m_t:K_t \to L_{\gamma(t)} \}
\]
and a collection of elements $l_{v} \in \mathbb{L}_{v}$ such that $\gamma \circ \mu =\mu', \ \gamma \circ \nu =\nu'$ and using $C(l)$ to denote conjugation by $l$, for every $u\in U, s\in S$ we have 

\[\ m_{\mu(s)}\circ f_s=C(l_{\mu'(s)}) \circ f'_{s}, \ m_{\nu(u)} \circ d_u = C(l_{\nu'(u)}) \circ d'_{u}.
\]
The composition of a cospan $D$ given by 
\[
\mathfrak{G}_1 \longrightarrow \mathfrak{G}_{12} \longleftarrow \mathfrak{G}_2
\]
and a cospan $C$ given by 
\[
\mathfrak{G}_2 \longrightarrow \mathfrak{G}_{23} \longleftarrow \mathfrak{G}_3
\]
is the cospan $C\circ D$ given by 
\[
\mathfrak{G}_1 \longrightarrow \mathfrak{G}_{12} \underset{\mathfrak{G}_2}\coprod \mathfrak{G}_{23} \longleftarrow \mathfrak{G}_3.
\]
The associativity for composition of morphisms uses the fact that given the above diagrams as well as the cospan $E$ given by 
\[
\mathfrak{G}_3 \longrightarrow \mathfrak{G}_{34} \longleftarrow \mathfrak{G}_4
\]
there exists an equivalence of categories 
\[
(\mathfrak{G}_{12} \underset{\mathfrak{G}_2}\coprod \mathfrak{G}_{23})\coprod_{\mathfrak{G}_3}\mathfrak{G}_{34} \cong \mathfrak{G}_{12} \underset{\mathfrak{G}_2}\coprod (\mathfrak{G}_{23}\coprod_{\mathfrak{G}_3}\mathfrak{G}_{34} )
\]
making the required functor diagrams commute up to natural isomorphism. Therefore \[(E \circ D) \circ C = E \circ (D \circ C).\]  The identity in $\wHom(\mathfrak{G},\mathfrak{G})$ is given by 
\begin{equation}\label{eqn:identity}
\mathfrak{G} \stackrel{\text{id}}\longrightarrow \mathfrak{G} \stackrel{\text{id}}\longleftarrow \mathfrak{G}
\end{equation}
where both maps are the identity from \ref{TT} Axiom T6.
We define a symmetric monoidal structure on this category. On objects it is just given by the coproduct of groupoids. On morphisms it is given by $C\coprod D$ defined by 
\begin{equation}\label{eqn:coprodmors}
    \begin{split}
& (\mathfrak{G}_1 \rightarrow \mathfrak{G}_{12} \leftarrow \mathfrak{G}_2)\coprod(\mathfrak{H}_1 \rightarrow \mathfrak{H}_{12} \leftarrow \mathfrak{H}_2) \\ & =(\mathfrak{G}_1 \coprod \mathfrak{H}_1) \rightarrow (\mathfrak{G}_{12} \coprod \mathfrak{H}_{12})\leftarrow (\mathfrak{G}_2 \coprod \mathfrak{H}_2) .    
    \end{split}
\end{equation}
The symmetry morphism is the obvious map $C\coprod D\to D\coprod C$ and the associators are also obvious. The monoidal unit is given by cospan \[\emptyset \longrightarrow \emptyset \longleftarrow \emptyset.
\] Consider the full monoidal subcategory of cospans whose objects are cospans of the form 
\begin{equation}\label{eqn:simplecob}
\mathfrak{G} \stackrel{g}\longrightarrow \mathfrak{H} \stackrel{\text{id}}\longleftarrow \mathfrak{H}.
\end{equation}
where $g$ is an equivalence of categories. We sometimes write this cobordism as $C(\mathfrak{G}, g, \mathfrak{H})$. It is easy to see by considering 
\[
\mathfrak{G} \stackrel{d}\rightarrow \mathfrak{E} \stackrel{\text{id}}\leftarrow 
\mathfrak{E}\stackrel{f}\rightarrow  \mathfrak{K} \stackrel{c}\leftarrow \mathfrak{H} \stackrel{b}\leftarrow \mathfrak{L} \stackrel{\text{id}}\leftarrow \mathfrak{L} 
\]
that
\begin{equation}\label{eqn:manycomp}
C(\mathfrak{H}, b, \mathfrak{L}) \circ C(\mathfrak{G}, f\circ d, \mathfrak{K},c, \mathfrak{H}) = C(\mathfrak{E}, f, \mathfrak{K}, b \circ c, \mathfrak{L}) \circ C(\mathfrak{G}, d, \mathfrak{E}).
\end{equation}
This equation shows that axiom T4 in \ref{TT} is a consequence of having a functoriality. If we consider an equivalence $b$ and consider 
\[
\mathfrak{E}\stackrel{f}\rightarrow  \mathfrak{K} \stackrel{c}\leftarrow \mathfrak{H} \stackrel{b}\rightarrow \mathfrak{G} \stackrel{\text{id}}\leftarrow \mathfrak{G} \stackrel{h}\rightarrow  \mathfrak{L} \stackrel{g}\leftarrow \mathfrak{M}
\]
then it is easy to see that if we let $\mathfrak{T}=\mathfrak{K} \coprod_{\mathfrak{H}}\mathfrak{L} \cong \mathfrak{K} \coprod_{\mathfrak{G}}\mathfrak{L} $ we have
\begin{equation}\label{eqn:otherthing}
C(\mathfrak{G}, h, \mathfrak{L}, g, \mathfrak{M}) \circ C(\mathfrak{H}, b, \mathfrak{G}) \circ C(\mathfrak{E}, f, \mathfrak{K}, c, \mathfrak{H})=C(\mathfrak{E}, f, \mathfrak{T}, g, \mathfrak{M}).
\end{equation}
This equation shows that axiom T7 in \ref{TT} is a consequence of having a symmetric monoidal functor.
A typical example for a group $G$ is
\begin{equation}\label{eqn:switch}
BG \coprod BG\stackrel{\sigma}\longrightarrow BG \coprod BG\stackrel{\text{id}}\longleftarrow BG \coprod BG
\end{equation}
where $\sigma$ is the switch map.
Another important example is for any automorphism $a$ of a group $G$ we have the cospan 
\begin{equation}\label{eqn:aut}
BG \stackrel{Ba}\longrightarrow BG  \stackrel{\text{id}}\longleftarrow BG .
\end{equation}

Notice that if we post-compose the cospan in Equation \ref{eqn:simplecob} with the cospan 
\[
\mathfrak{H} \stackrel{f}\longrightarrow \mathfrak{K} \stackrel{\text{id}}\longleftarrow \mathfrak{K}.
\]
we get a cobordism equivalent to 
\[
\mathfrak{G} \stackrel{f \circ g}\longrightarrow \mathfrak{K} \stackrel{\text{id}}\longleftarrow \mathfrak{K}.
\]
In other words $C(\mathfrak{H}, f, \mathfrak{K}) \circ C(\mathfrak{G}, g, \mathfrak{H})= C(\mathfrak{G}, f \circ g, \mathfrak{K})$, corresponding to \ref{TT} axiom T3. Therefore the monoidal category whose objects are groupoids and whose morphisms are (natural isomorphism) equivalence classes of functors has a symmetric monoidal functor into the category of groupoid cospans. 
A symmetric lax monoidal functor from a symmetric monoidal subcategory $S$ of this category of cospans of groupoids to the category of R-modules with its usual tensor product then consists of a finitely presented projective $R$-module $Z(\mathfrak{G})=V_\mathfrak{G}$ for any groupoid which is an object in $S$, for any cospan $C$
\[
\mathfrak{G}_1 \longrightarrow \mathfrak{G}_{12} \longleftarrow \mathfrak{G}_2.
\]
in $S$ a $R$-linear map of $R$-modules 
\[
V_{\mathfrak{G}_1} \stackrel{Z(C)}\longrightarrow V_{\mathfrak{G}_2}
\]
and composition of cobordisms go to compositions of  $R$-linear maps:
\[
Z(C \circ D)= Z(C) \circ Z(D),
\]
this is axiom T2 of \ref{TT}.
We have $Z(\text{id})= \text{id}$. The empty groupoid maps to $R$ and the cobordism (\ref{eqn:identity}) maps to the identity of $V_{\mathfrak{G}}$.  In the situation of (\ref{eqn:coprodmors}) we have
\[
Z(C)\otimes_{R} Z(D)= Z(C \coprod D)
\]
as maps \[Z(\mathfrak{G}_{1})\otimes_{R} Z(\mathfrak{H}_{1})\longrightarrow Z(\mathfrak{G}_{2})\otimes_{R} Z(\mathfrak{H}_{2})
\]
\bibliography{Arithmetic field theory via pro-p duality groups}	
\end{document}